\documentclass[oneside]{amsart}
\usepackage{silence}
\WarningFilter{biblatex}{Patching footnotes failed}
\usepackage[all]{xy}
\usepackage{enumerate}
\usepackage{marginnote}
\usepackage{subcaption,nicefrac}
\usepackage[unicode,naturalnames,hidelinks]{hyperref}
\usepackage[utf8]{inputenc}
\usepackage[T1]{fontenc}
\usepackage{microtype}
\usepackage[backref=true,maxalphanames=6,isbn=false,url=false,style=alphabetic,maxbibnames=6,backend=biber,safeinputenc]{biblatex}
\DeclareFieldFormat{postnote}{#1}
\DeclareFieldFormat{multipostnote}{#1}
\bibliography{1131}

\usepackage{stix}

\DeclareMathOperator{\mypart}{\mathsf{COB}}
\DeclareMathOperator{\mylin}{\mathsf{LCU}}
\DeclareMathOperator{\prob}{pr}
\newcommand{\rel}{{\mathrm{rel}}}
\newcommand{\defeq}{\coloneq}
\renewcommand{\eqdef}{\eqcolon}


\newtheorem{theorem}[equation]{Theorem}

\newtheorem{construction}[equation]{Lemma/Construction}

\newtheorem{lemma}[equation]{Lemma}

\theoremstyle{definition}
\newtheorem{definition}[equation]{Definition}
\theoremstyle{remark}
\newtheorem{remark}[equation]{Remark}
\newtheorem*{notation*}{Notation}
\newtheorem{assumption}[equation]{Assumption}

\newtheorem{fact}[equation]{Fact}
\newtheorem{facts}[equation]{Facts}

\numberwithin{equation}{section}

\newcommand{\forces}{\Vdash}

\DeclareMathOperator{\dom}{dom}

\newcommand{\mye}{*+[F.]{\phantom{\lambda}}}
\newcommand{\myex}{*+[F.]{\phantom{x}}}

\DeclareMathOperator{\Av}{Av}
\DeclareMathOperator{\Leb}{Leb}

\newcommand{\QII}{\tilde{\mathbb{E}}}
\newcommand{\pdfQII}{Q2}
\newcommand{\myparti}{\mypart_i}
\newcommand{\mypartI}{\mypart_1}
\newcommand{\mypartII}{\mypart_2}
\newcommand{\mypartIII}{\mypart_3}
\newcommand{\mypartIV}{\mypart_4}
\newcommand{\mylini}{\mylin_i}
\newcommand{\mylinI}{\mylin_1}
\newcommand{\mylinII}{\mylin_2}
\newcommand{\mylinIII}{\mylin_3}
\newcommand{\mylinIV}{\mylin_4}
\DeclareMathOperator{\cf}{cf}
\DeclareMathOperator{\supp}{supp}
\DeclareMathOperator{\Rel}{R}
\newcommand{\Ri}{\Rel_i}
\DeclareMathOperator{\RI}{R_1}
\DeclareMathOperator{\RII}{R_2}
\DeclareMathOperator{\RIII}{R_3}
\DeclareMathOperator{\RIV}{R_4}

\newcommand{\Pa}{\mathbb{P}^5}

\newcommand{\PaVI}{\mathbb{P}^6}
\newcommand{\PaVII}{\mathbb{P}^7}
\newcommand{\PaVIII}{\mathbb{P}^8}
\newcommand{\PaIX}{\mathbb{P}^9}

\DeclareMathOperator{\cov}{cov}

\DeclareMathOperator{\stem}{trunk}

\DeclareMathOperator{\loss}{loss}

\DeclareMathOperator{\cof}{cof}
\DeclareMathOperator{\non}{non}
\DeclareMathOperator{\add}{add}

\newcommand{\covnull}{\cov(\mathcal N)}
\newcommand{\cofnull}{\cof(\mathcal N)}
\newcommand{\addnull}{\add(\mathcal N)}
\newcommand{\nonnull}{\non(\mathcal N)}
\newcommand{\covmeager}{\cov(\mathcal M)}
\newcommand{\cofmeager}{\cof(\mathcal M)}
\newcommand{\addmeager}{\add(\mathcal M)}
\newcommand{\nonmeager}{\non(\mathcal M)}

\subjclass[2010]{03E17}
\keywords{Set theory of the reals, Cicho\'n's  diagram, Forcing, Compact cardinals}
\date{2018-09-28}
\title{Another ordering of the ten cardinal characteristics in Cicho\'n's diagram}
\dedicatory{Dedicated to the memory of Bohuslav Balcar (1943--2017)}
\thanks{Supported by 
Austrian Science Fund (FWF): 
P26737 \& P30666 (first author),
European Research Council grant ERC-2013-ADG 338821 (second author).
The third author is recipient of a DOC Fellowship of the Austrian Academy of Sciences at the Institute of Discrete Mathematics and Geometry, TU Wien.
This is publication number 1131 of the second author.}
\author{Jakob Kellner}
\address{Technische Universität Wien (TU Wien).}
\email{jakob.kellner@tuwien.ac.at}
\urladdr{\url{http://dmg.tuwien.ac.at/kellner/}}
\author{Saharon Shelah}
\address{The Hebrew University of Jerusalem and Rutgers University.}
\email{shlhetal@mat.huji.ac.il}
\urladdr{\url{http://shelah.logic.at/}}
\author{Anda Ramona T{\u{a}}nasie}
\address{Technische Universität Wien (TU Wien).}
\email{anda-ramona.tanasie@tuwien.ac.at}
\begin{document}

\begin{abstract}
It is consistent that 
\[
\aleph_1 < \addnull <  \addmeager= \mathfrak{b} < \covnull <
\nonmeager < \covmeager = 2^{\aleph_0}.\]
Assuming  four strongly compact cardinals, it is consistent that 
\begin{multline*}
\aleph_1 < \addnull <\addmeager=\mathfrak{b} < \covnull < 
\nonmeager < \\
<
\covmeager < \nonnull < 
\cofmeager= \mathfrak{d} < \cofnull < 2^{\aleph_0}.
\end{multline*}
\end{abstract}

\maketitle
\begin{figure}
\[
\xymatrix@=2.5ex{
&            \covnull\ar[r]        & \nonmeager \ar[r]      &  \cofmeager \ar[r]     & \cofnull\ar[r] &   2^{\aleph_0}\\
&                               & \mathfrak b\ar[r]\ar[u]  &  \mathfrak d\ar[u] &              \\ 
\aleph_1\ar[r]   &\addnull\ar[r]\ar[uu] & \addmeager\ar[r]\ar[u] &  \covmeager\ar[r]\ar[u]& \nonnull\ar[uu] 
}
\]    
    \caption{\label{fig:Cichon}Cicho\'n's  diagram}
\end{figure}

\section*{Introduction}
We assume that the reader is familiar with basic properties of Amoeba, Hechler, random and Cohen forcing, and with the cardinal characteristics in Cicho\'n's  diagram, given in Figure~\ref{fig:Cichon}:
An arrow between $\mathfrak x$ and $\mathfrak y$ indicates that ZFC proves
$\mathfrak x\le \mathfrak y$. 
Moreover, $\max(\mathfrak d,\nonmeager)=\cofmeager$ and $\min(\mathfrak b,\covmeager)=\addmeager$. 
These (in)equalities are the only one
provable. More precisely, all assignments of the values $\aleph_1$ and $\aleph_2$
to the characteristics in Cicho\'n's diagram are consistent, provided they do
not contradict the above (in)equalities.  (A complete proof can be found
in~\cite[ch.~7]{BJ}.)

In the following, we will only deal with
the ten ``independent'' characteristics listed in Figure~\ref{fig:ten} (they 
determine $\cofmeager$ and $\addmeager$).

\begin{figure}
\[
\xymatrix@=2.5ex{
&            \covnull\ar[r]        & \nonmeager \ar[r]      &  \mye \ar[r]     & \cofnull\ar[r] &   2^{\aleph_0}\\
&                               & \mathfrak b\ar[r]\ar[u]  &  \mathfrak d\ar[u] &              \\ 
\aleph_1\ar[r]   &\addnull\ar[r]\ar[uu] & \mye\ar[r]\ar[u] &  \covmeager\ar[r]\ar[u]& \nonnull\ar[uu] 
}
\]    
    \caption{\label{fig:ten}The ten ``independent'' characteristics.}
\end{figure}

\begin{figure}
  \centering
  \begin{minipage}[b]{0.49\textwidth}
\[
\xymatrix@=2.5ex{
&            \lambda_2\ar[r]        & \lambda_4 \ar[r]      &  \mye \ar[r]     & \lambda_8\ar[r] &\lambda_9 \\
&                               & \lambda_3\ar[r]\ar[u]  &  \lambda_6\ar[u] &              \\ 
  \aleph_1\ar[r] & \lambda_1\ar[r]\ar[uu] & \mye\ar[r]\ar[u] &  \lambda_5\ar[r]\ar[u]& \lambda_7\ar[uu]  
}
\]       
    \caption{\label{fig:oldcards}The old order.}
  \end{minipage}
  \hfill
  \begin{minipage}[b]{0.49\textwidth}
\[
\xymatrix@=2.5ex{
&            \lambda_3\ar[r]        & \lambda_4 \ar[r]      &  \mye \ar[r]     & \lambda_8\ar[r] &\lambda_9 \\
&                               & \lambda_2\ar[r]\ar[u]  &  \lambda_7\ar[u] &              \\ 
  \aleph_1\ar[r] & \lambda_1\ar[r]\ar[uu] & \mye\ar[r]\ar[u] &  \lambda_5\ar[r]\ar[u]& \lambda_6\ar[uu]  
}
\]       
    \caption{\label{fig:ourcards}The new order.}
  \end{minipage}
\end{figure}

\pagebreak
\textbf{Regarding the left hand side,} it was shown in~\cite{MR3513558} that consistently 
\begin{equation}\tag{$\text{left}_\text{old}$}\label{eq:leftold}
\aleph_1  < \addnull  < \covnull <  \addmeager= \mathfrak{b} <
\nonmeager <\covmeager = 2^{\aleph_0}.
\end{equation}
(This corresponds to $\lambda_1$ to $\lambda_5$ in Figure~\ref{fig:oldcards}.)
The proof is repeated in~\cite{ten}, in a slightly different form which is more convenient
for our purpose. Let us call this construction the ``old construction''. 

In this paper, building on~\cite{sh}, we give a construction to get a different order for these characteristics, 
where we swap $\covnull$ and $\mathfrak{b}$:
\begin{equation}\tag{$\text{left}_\text{new}$}\label{eq:leftnew}
\aleph_1 <   \addnull  < \addmeager= \mathfrak{b} < \covnull <
\nonmeager < \covmeager = 2^{\aleph_0}.
\end{equation}
(This corresponds to $\lambda_1$ to $\lambda_5$ in Figure~\ref{fig:ourcards}.)

This construction is more complicated than the old one. Let us briefly describe the reason:
In both constructions, we assign 
to each of the
cardinal characteristics of the left hand side
a relation $\Rel$. E.g., we use the ``eventually different'' relation
$\RIV\subseteq \omega^\omega\times \omega^\omega$ 
for $\nonmeager$.
We can then show that the characteristic
remains ``small'' (i.e., is at most the intended value $\lambda$ in the final model), because all single forcings we use in the iterations are either small
(i.e., smaller than $\lambda$) or are ``$\Rel$-good''. However,
 $\mathfrak b$ (with the ``eventually dominating'' relation $\RII\subseteq \omega^\omega\times \omega^\omega$) 
is an exception: We do not know any variant of an
eventually different forcing (which we need to increase $\nonmeager$) which satisfies that all of its subalgebras are $\RII$-good. Accordingly, the main effort (in both constructions) is to show that  $\mathfrak b$ remains small.

In the old construction,
each non-small forcing is a ($\sigma$-centered) subalgebra of 
the eventually different forcing $\mathbb E$. To deal with 
such forcings,  
ultrafilter limits of sequences of $\mathbb E$-conditions are introduced and used (and we require that all $\mathbb E$-subforcings are basically
$\mathbb E$ intersected with some 
model, and thus closed 
under limits of sequences in the model). 
In the new construction, we have to deal with an additional kind of ``large'' forcing: (subforcings of) random forcing.
Ultrafilter limits do not work any more, but, similarly to~\cite{sh}, we can use 
finite additive measures (FAMs) and interval-FAM-limits of random conditions.
But now  $\mathbb E$ doesn't seem to work with interval-FAM-limits any more,
so we replace it with a creature forcing notion $\QII$.

We also have to show that $\covnull$ remains small. In the old construction, we could use a rather simple (and well understood) relation $\Rel^\mathrm{old}$ and 
use the fact that all $\sigma$-centered forcings are $\Rel^\mathrm{old}$-good:
As all large forcings are subalgebras of either eventually different forcing or of Hechler forcing, they are all $\sigma$-centered.
In the new construction, the large forcings we have to deal with are subforcings of  
$\QII$. But $\QII$ is not $\sigma$-centered,
just $(\rho,\pi)$-linked for a suitable pair $(\rho,\pi)$ (a property between $\sigma$-centered and $\sigma$-linked, first defined in~\cite{kamo}, see Def.~\ref{def:pirholinked}).
So we use a different (and more cumbersome) relation $\RIII$,
introduced in~\cite{kamo}, where it is also shown that 
$(\rho,\pi)$-linked forcings are $\RIII$-good.

\medskip

\textbf{Regarding the whole diagram:} In~\cite{ten},
starting with the iteration for~\eqref{eq:leftold}, 
a new iteration is constructed to get simultaneously different values for all characteristics:
Assuming  four strongly compact cardinals, the following is consistent (cf.\ Figure~\ref{fig:oldcards}):
\[
\aleph_1 < \addnull < \covnull  <\mathfrak{b}< 
\nonmeager < \covmeager < 
 \mathfrak{d} < \nonnull  < \cofnull < 2^{\aleph_0}.
\]
The essential ingredient 
is the concept of the Boolean ultrapower of a forcing notion. 

In exactly the same way we can expand our new version~\eqref{eq:leftnew} 
to the right hand side, where also the 
characteristics dual to $\mathfrak{b}$ and $\covnull$ are swapped. So we get:
If four strongly compact cardinals are consistent, then so is
the following (cf.\ Figure~\ref{fig:ourcards}):
\[
\aleph_1 < \addnull < \mathfrak{b} <\covnull <\nonmeager<\covmeager<  \nonnull < \mathfrak{d} < \cofnull < 2^{\aleph_0}.
\]

We closely follow the presentation of~\cite{ten}. Several times, we refer to~\cite{ten} and to~\cite{sh} for details in definitions or proofs.
We thank Martin Goldstern and Diego Mej\'{i}a for valuable discussions, and an anonymous referee for a very detailed and helpful report pointing out (and even fixing) several mistakes in the first version of the paper.


\section{Finitely additive measure limits and the \texorpdfstring{$\QII$}{\pdfQII}-forcing.}

\subsection{FAM-limits and random forcing}

We briefly list some basic notation and facts around finite additive measures.
(A bit more details can be found in Section~1 of~\cite{sh}.)

\begin{definition}
\begin{itemize}
\item
A ``partial FAM'' (finitely additive measure) $\Xi'$ is a 
finitely additive probability measure on a sub-Boolean algebra $\mathcal B$ of  $\mathcal P(\omega)$, 
the power set of $\omega$, such that $\{n\}\in\mathcal B$ and $\Xi'(\{n\})=0$ for all $n\in\omega$. We set $\dom(\Xi')=\mathcal B$.
\item 
$\Xi$ is a FAM if it is a partial FAM with $\dom(\Xi)=\mathcal P(\omega)$.
\item  
For every FAM $\Xi$ and bounded sequence of non-negative reals $\bar a=(a_n)_{n\in\omega}$ we can define in the natural way the 
average (or: integral) $\Av_\Xi(\bar a)$, a non-negative real number.
\end{itemize}
\end{definition}

\cite[1.2]{sh} lists several results that informally say:
\begin{equation}\tag{$*$}\label{eq:basic}
\parbox{0.8\columnwidth}{There is a FAM $\Xi$ that assigns the values 
$a_i$ to the sets $A_i$ (for all $i$ in some index set $I$) iff
for each $I'\subseteq I$ finite and $\epsilon>0$
there is an arbitrary large\footnotemark{}
finite $u\subseteq \omega$ such that the counting measure on $u$ for $A_i$ approximates $a_i$ with an error of
at most $\epsilon$, for all $i\in I'$.}
\end{equation}\footnotetext{Equivalently: ``a finite $u$ with arbitrary large minimum'', which is the formulation actually used in most of the results.}

For the size of such an ``$\epsilon$-good approximation'' $u$ to some FAM $\Xi$ we can give an upper bound for $|u|$ 
which only depends on $|I'|$ and $\epsilon$ (and not on $\Xi$):

\begin{lemma}\label{lem:kjwrjio}
Given 
$N,k^*\in\omega$ and $\epsilon>0$, there is an $M\in\omega$  such that:
For all FAMs $\Xi$ and $(A_n)_{n<N}$  there is a nonempty $u\subseteq \omega$
of size ${\le}M$ such that $\min(u)>k^*$ and 
$\Xi(A_n)-\epsilon<\frac{|A_n\cap u|}{|u|}<\Xi(A_n)+\epsilon$
for all $n<N$.
\end{lemma}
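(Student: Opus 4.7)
The plan is to reduce to a partition argument and then use a rational approximation to the measures of the atoms, choosing the denominator to depend only on $N$ and $\epsilon$. The key observation is that all the relevant information is already encoded in the $2^N$ atoms of the Boolean algebra generated by $A_0,\dots,A_{N-1}$, and that any atom of positive $\Xi$-measure is automatically infinite (since $\Xi(\{n\})=0$), hence contains arbitrarily many elements above $k^*$.

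More precisely, I would first let $B_t = \bigcap_{t(n)=1} A_n \cap \bigcap_{t(n)=0}(\omega\setminus A_n)$ for $t\in 2^N$, and set $\beta_t = \Xi(B_t)$. By finite additivity $\sum_t \beta_t = 1$ and $\Xi(A_n)=\sum_{t(n)=1}\beta_t$. Put $T=\{t:\beta_t>0\}$; then each $B_t$ with $t\in T$ is infinite. Next I would pick a single integer $L=L(N,\epsilon)$ large enough (explicitly, $L \ge 2^N+1$ and $L \ge C \cdot 4^N/\epsilon$ for a small absolute constant $C$), and set $\alpha_t = \lfloor \beta_t L\rfloor$ for $t\in T$. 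Finally, for each $t\in T$ I would choose $\alpha_t$ distinct elements of $B_t$ above $k^*$ (possible, since $B_t$ is infinite) and let $u$ be their union.

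For the estimate: writing $S=|u|=\sum_{t\in T}\alpha_t$, rounding gives $L-2^N \le S\le L$, hence in particular $S\ge 1$ so $u$ is nonempty, and a routine computation yields $|\alpha_t/S - \beta_t| \le 2\cdot 2^N/L$. Summing over the at most $2^N$ indices $t$ with $t(n)=1$ bounds the error in the $n$-th coordinate:
\[
\left|\tfrac{|A_n\cap u|}{|u|} - \Xi(A_n)\right|
= \left|\sum_{t(n)=1}\tfrac{\alpha_t}{S} - \sum_{t(n)=1}\beta_t\right|
\le 2\cdot 4^N/L < \epsilon,
\]
so taking $M=L$ works uniformly. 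The number $M$ depends only on $N$ and $\epsilon$ (not on $k^*$, which merely restricts where we draw elements from the infinite atoms, and not on $\Xi$ or the $A_n$ themselves).

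The only step I would watch carefully is the lower bound $S\ge 1$ ensuring $u$ is nonempty, and the rounding control ensuring $|\alpha_t/S - \beta_t|$ is small; both are handled by taking $L$ comfortably larger than $2^N$. No appeal to $(*)$ or to any compactness/Carathéodory argument is needed — the existence of $u$ drops out by directly picking integer counts from the finitely many atoms, and the uniform bound is automatic because the combinatorial complexity is controlled by $2^N$ rather than by the FAM $\Xi$.
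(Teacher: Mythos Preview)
Your proof is correct and follows essentially the same approach as the paper: decompose into the at most $2^N$ atoms of the algebra generated by the $A_n$, note that positive-measure atoms are infinite, round the atom measures to rationals with a fixed denominator depending only on $N$ and $\epsilon$, and pick the corresponding number of points from each atom above $k^*$. The only cosmetic difference is that the paper rounds so that the rounded measures sum exactly to $1$ (giving $|u|=L\cdot 2^N$ on the nose and hence the slightly sharper bound $M\approx 2^N/\epsilon$), whereas your floor-rounding lets $|u|$ float in $[L-2^N,L]$ and costs an extra factor of $2^N$ in $M$; this does not affect the argument.
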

\begin{proof} 
We can assume that $\epsilon=\frac{1}{L}$ for an integer $L$.
$\{A_n:\, n\in N\}$ generates the set algebra $\mathfrak B\subseteq \mathcal P(\omega)$. 
Let $\mathcal X$ be the set of atoms of $\mathfrak{B}$. So 
$\mathcal X$ is a partition of $\omega$ of size ${\le}2^N$. 
Set $\mathcal X'=\{x\in \mathcal X:\, \Xi(x)>0\}$.
Every $x\in \mathcal X'$ is infinite, and 
$\sum_{x\in\mathcal X'}\Xi(x)=1$.

Round $\Xi(x)$ to some number 
$\Xi^\epsilon(x)=\ell_x\cdot \frac{1}{L\cdot 2^{N}}$ for some 
integer 
$0\le \ell_x \le L\cdot 2^N$, such that
$|\Xi(x)-\Xi^\epsilon(x)|<\frac{1}{L\cdot 2^{N}}$
and 
$\sum_{x\in \mathcal X'}\Xi^\epsilon(x)$ is still $1$.
So $\sum_{x\in \mathcal X'} \ell_x=L\cdot 2^N$, and we  construct 
$u$ consisting of 
$\ell_x$ many points that are bigger than $k^*$ and in  $x$ (for each $x\in\mathcal X'$).
\end{proof}

We will use the following variants of~\eqref{eq:basic}, regarding the possibility to extend a partial FAM $\Xi'$ to a FAM $\Xi$.
The straightforward, if somewhat tedious, proofs are given in~\cite[1.3(G) and 1.7]{sh}.

\begin{fact}\label{fact:FAMextensions}
Let $\Xi'$ be a partial FAM, and $I$ some index set.
\begin{enumerate}[(a)]
\item\label{item:uf} 
Fix for each $i\in I$ some $A_i\subseteq \omega$.
\\
\emph{If} $A\cap \bigcap_{i\in I'} A_i\neq \emptyset$
for all $I'\subseteq I$ finite and 
 $A\in\dom(\Xi')$ with $\Xi'(A)>0$,
\\
\emph{then} $\Xi'$ can be extended to a FAM $\Xi$ such that $\Xi(A_i)=1$ for all $i\in I$.
\item\label{item:FAMsucc}
Fix for each $i\in I$
some real $b^i$ and some bounded sequence of non-negative reals $\bar a^i=(a^i_k)_{k\in\omega}$.
\\
\emph{If} for each finite partition $(B_m)_{m<m^*}$ of $\omega$
into elements of $\dom(\Xi')$, for each $\epsilon >0$, $k^*\in \omega$, and $I'\subseteq I$ finite 
there is a finite $u\subseteq \omega\setminus k^*$ such that 
\begin{itemize}
\item for all $m<m^*$, $\Xi'(B_m)-\epsilon\le \frac{|B_m\cap u|}{|u|}\le \Xi'(B_m)+\epsilon$, and 
\item for all $i\in I'$, $\frac1{|u|}\sum_{k\in u} a^i_k\ge b^i-\epsilon$,
\end{itemize} 
\emph{then} $\Xi'$ can be extended to a FAM $\Xi$ such that $\Av_{\Xi}(\bar a^i)\ge b^i$ for all $i\in I$.
\end{enumerate}
\end{fact}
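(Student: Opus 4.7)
The plan is to handle both parts via the same two-step strategy: first solve the problem when $I$ is finite by a direct construction, then bootstrap to arbitrary $I$ by a compactness argument. For the compactness step, view FAMs as elements of the compact product $[0,1]^{\mathcal{P}(\omega)}$. The FAM axioms and the condition ``$\Xi$ extends $\Xi'$'' cut out a closed subspace, and each of the targets ``$\Xi(A_i)=1$'' and ``$\Av_\Xi(\bar a^i)\ge b^i$'' is also closed in the pointwise topology; for the latter, $\Av_\Xi(\bar a^i)$ is a uniform limit of continuous functionals $\Xi\mapsto\sum c_j\Xi(B_j)$ obtained by approximating the bounded sequence $\bar a^i$ uniformly by simple functions. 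By the finite intersection property, both statements reduce to the case $|I|<\omega$.

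For part~(a) with finite $I'\subseteq I$, set $A^*\defeq\bigcap_{i\in I'}A_i$. The hypothesis gives $A^*\cap A\ne\emptyset$ for every $A\in\dom(\Xi')$ with $\Xi'(A)>0$, which is exactly the criterion permitting the extension of $\Xi'$ to the algebra generated by $\dom(\Xi')\cup\{A^*\}$: define $\Xi''(A\cap A^*)\defeq\Xi'(A)$ and $\Xi''(A\setminus A^*)\defeq 0$ for $A\in\dom(\Xi')$, with well-definedness guaranteed by the hypothesis applied to symmetric differences, and verify finite additivity. A Hahn--Banach extension of $\Xi''$ to $\mathcal{P}(\omega)$ then yields the desired FAM, since $\Xi(A_i)\ge\Xi(A^*)=1$ for every $i\in I'$.

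For part~(b) with finite $I'\subseteq I$, I construct $\Xi$ as a pointwise cluster point of counting measures. Enumerate $\dom(\Xi')=\{D_0,D_1,\dots\}$, let $\mathcal{B}_n$ be the finite partition of $\omega$ generated by $\{D_0,\dots,D_{n-1}\}$, pick $\epsilon_n$ small compared to $|\mathcal{B}_n|$ (e.g.\ $\epsilon_n\defeq 1/(n\cdot 2^n)$), and apply the hypothesis with $\mathcal{B}_n$, $\epsilon_n$, $k^*_n\defeq n$, and $I'$ to produce $u_n\subseteq\omega\setminus n$; set $\mu_n(X)\defeq |X\cap u_n|/|u_n|$. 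Let $\Xi$ be any cluster point of $(\mu_n)_n$ in $[0,1]^{\mathcal{P}(\omega)}$. Since $\min u_n\to\infty$, the limit vanishes on singletons and is a genuine FAM. Finite additivity of $\mu_n$ on $\mathcal{B}_n$ combined with the choice of $\epsilon_n$ gives $|\mu_n(D_k)-\Xi'(D_k)|\le 1/n$ for every $k<n$, forcing $\Xi$ to agree with $\Xi'$ on every $D_k$, hence on all of $\dom(\Xi')$. Continuity of $\Av$ on bounded sequences together with the average clause $\Av_{\mu_n}(\bar a^i)\ge b^i-\epsilon_n$ then delivers $\Av_\Xi(\bar a^i)\ge b^i$ for each $i\in I'$.

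The delicate step is the simultaneity in (b): a single $u_n$ must approximate $\Xi'$ on an ever-growing partition \emph{and} produce large averages for every $i\in I'$, \emph{while} having $\min u_n>k^*_n$ arbitrarily large. The hypothesis packages all three into one existential statement, so the real task is only to tune the parameters $\mathcal{B}_n,\epsilon_n,k^*_n$ carefully enough that the limiting FAM is singleton-null, extends $\Xi'$, and satisfies the average bound all at once; taking $\epsilon_n$ to decay faster than $1/|\mathcal{B}_n|$ does the job. Part~(a) is comparatively painless because no averaging is involved and the extension reduces to a single outer-measure-one step.
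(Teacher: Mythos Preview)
The paper itself does not prove this fact; it simply cites \cite[1.3(G) and 1.7]{sh}. Your approach via compactness in $[0,1]^{\mathcal P(\omega)}$ and cluster points of counting measures is the standard one and is essentially correct, with one caveat.

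In your treatment of~(b) you write ``Enumerate $\dom(\Xi')=\{D_0,D_1,\dots\}$'', but nothing in the statement guarantees that $\dom(\Xi')$ is countable, and in the paper's main application (Lemma~\ref{lem:extension}) it is not: there $\Xi'$ is a ground-model FAM viewed in a forcing extension, so its domain is $\mathcal P(\omega)^V$. The repair is painless and fits your own framework: simply fold the conditions ``$\Xi(D)=\Xi'(D)$'' for $D\in\dom(\Xi')$ into the compactness step alongside the conditions indexed by~$I$. Each such equation is closed in the product topology, so by the finite intersection property it suffices to produce, for each finite $I'\subseteq I$ and each finite $F\subseteq\dom(\Xi')$, a FAM agreeing with $\Xi'$ on $F$ and satisfying the average bounds for~$I'$; a single application of the hypothesis (using the partition generated by $F$) then feeds your cluster-point construction. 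Alternatively, replace the sequence $(\mu_n)_n$ by the net indexed by triples (finite $\dom(\Xi')$-partition, $\epsilon$, $k^*$) ordered by refinement, smallness, and largeness. Part~(a) is fine as written.
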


We first define what it means for a forcing $Q$ to have FAM limits.

\begin{remark}
Intuitively, this means (in the simplest version): 
Fix a FAM $\Xi$.
We can define for each sequence $q_k$ of conditions that are all
``similar'' (e.g., have the same stem and measure)
a limit $\lim_\Xi \bar q$.
And we find in the $Q$-extension a FAM $\Xi'$ extending $\Xi$, such that
$\lim_\Xi (\bar q)$ forces that the set of $k$ satisfying 
$P(k)\equiv\text{``}q_k\in G\text{''}$ has ``large'' $\Xi'$-measure.
Up to here, we get the notion used in~\cite{MR3513558}
and~\cite{ten} (but there we use ultrafilters instead of FAMs, and ``large''
means being in the ultrafilter).
However, we need a modification:
Instead of single conditions $q_k$ we use a finite 
sequence $(p_\ell)_{\ell\in I_k}$ (where $I_k$ is a fixed, finite interval);
and the condition $P(k)$, which we want to satisfy on a large set, now is
``$\frac{|\{\ell\in I_k:\, p_\ell \in G\}|}{|I_k|}>b$'' for some suitable $b$.
This is the notion used implicitly in~\cite{sh}.
\end{remark}

\begin{notation*}
Let $T^*$ be a compact subtree of $\omega^{{<}\omega}$, for example $T^*=2^{{<}\omega}$. Let $s,t\in T^*$.
Let $S$ be a subtree of $T^*$.
\begin{itemize}
\item $t\rhd s$ means ``$t$ is immediate successor of $s$''.
\item 
$|s|$ is the length of $s$ (i.e.: the height, or level, of $s$).
\item $[t]$ is the set of nodes in  $T^*$ comparable with $t$.
\item 
We set
$\lim(S)=\{x\in \omega^\omega:\, (\forall n\in\omega)\ x\restriction n\in S\}$.
\item $\stem(S)$ is the smallest splitting node of $S$.
With ``$t\in S$ above the stem'' we mean that
$t \in S$ and $t\ge \stem(S)$; or equivalently:
$t \in S$ and $|t|\ge |\stem(S)|$. 
\item
$\Leb$ is the canonical measure on the 
Borel subsets of $\lim(T^*)$.
We also write $\Leb(S)$ instead of $\Leb(\lim(S))$.\footnote{I.e., we define $\Leb([s])$ by induction 
on the height of $s\in T^*$ as follows: 
$\Leb(T^*)=1$, and if $s$ has $n$ many immediate successors in $T^*$, then $\Leb([t])=\frac{\Leb([s])}{n}$ for any 
such successor. This defines a measure on each basic clopen set, which
in turn defines a (probability) measure on the Borel subsets of $\lim(T^*)$
(a closed subset of $\omega^\omega$).}
\end{itemize}
\end{notation*}

We fix, for the rest of the paper, 
an interval partition $\bar I=(I_k)_{k\in\omega}$   of $\omega$
such that $|I_k|$ converges to infinity.
We will use forcing notions $Q$ satisfying the following setup:
\begin{assumption}\label{asm:qlim}
\begin{itemize}
\item $Q'\subseteq Q$ is dense and the domain of 
functions $\stem$ and $\loss$, where 
$\stem(q)\in H(\aleph_0)$ and 
$\loss(q)$ is a non-negative rational.
\item For each $\epsilon>0$
the set $\{q\in Q':\, \loss(q)<\epsilon\}$ is dense (in $Q'$ and thus in $Q$).
\item 
$\{p\in Q':\, (\stem(p),\loss(p))=(\stem^*,\loss^*)\}$
is $\lfloor\frac1{\loss^*}\rfloor$-linked. 
I.e., each $\lfloor\frac1{\loss^*}\rfloor$ many such conditions are compatible.\footnote{In ~\cite[2.9]{sh},
$\stem$ and $\loss$ are called $h_2$ and $h_1$; and
instead of $I_k$ the interval is called  $[n^*_k,n^*_{k+1}-1]$. Moreover,
in \cite{sh} the sequence $(n^*_k)_{k\in \omega}$ is one of the parameters of a
``blueprint'', whereas we assume that the $I_k$ are fixed.}
\end{itemize}
\end{assumption}

In this paper, $Q$ will be one of the following two forcing notions: random forcing, or $\QII$ (as defined in Definition~\ref{def:QII}).
We will now specify the instance of random forcing that we will use:

\begin{definition}\label{def:random}
\begin{itemize}
\item A random condition is a tree $T\subseteq 2^{{<}\omega}$
such that $\Leb(T\cap [t])>0$ for all $t\in T$.
\item $\stem(T)$ is the stem of $T$ (i.e., the shortest splitting node).
\item 
If $\Leb(T)=\Leb([\stem(T)])$, we set $\loss(T)=0$. Otherwise,
let $m$ be the maximal natural number such that 
\[
\Leb(T)> \Leb([\stem(T)])(1-\frac{1}{m})
\]
and set\footnote{In~\cite{sh}, this is implicit in 2.11(f).}
$\loss(T)=\frac1m$.
\end{itemize}
\end{definition}
Note that $\Leb(T)\ge 2^{-|\stem(T)|}(1-\loss(T))$ (and the inequality is strict if $\loss(T)>0$).


Note that this definition of random forcing
satisfies Assumption~\ref{asm:qlim}
(with $Q'=Q$).

\begin{definition}\label{def:famlim} 
Fix $Q$ and functions $(\stem,\loss)$ as in Assumption~\ref{asm:qlim}, 
a FAM $\Xi$
and a function $\lim_\Xi: Q^\omega \to Q$.
Let us call the objects mentioned so far a ``limit setup''.
Let a $(\stem^*,\loss^*)$-sequence be a sequence
$(q_\ell)_{\ell\in\omega}$ of $Q$-conditions such that
$\stem(q_\ell)=\stem^*$ and 
$\loss(q_\ell)=\loss^*$ for all $\ell\in\omega$.

We say ``$\lim_\Xi$ is a strong FAM limit for intervals'', if 
the following is satisfied: Given
\begin{itemize}
\item a pair $(\stem^*,\loss^*)$,
$j^*\in\omega$, and $(\stem^*,\loss^*)$-sequences $\bar q^j$ 
for $j<j^*$,
\item $\epsilon>0$, $k^*\in\omega$,
\item $m^*\in\omega$ and a partition of $\omega$ into 
sets $B_m$ ($m\in m^*$), and 
\item a condition $q$ stronger than all $\lim_\Xi(\bar q^j)$  for all $j<j^*$, 
\end{itemize}
there is a finite $u\subseteq \omega\setminus k^*$ and a $q'$ stronger than $q$ such that
\begin{itemize}
\item $\Xi(B_m)-\epsilon<\frac{|u\cap B_m|}{|u|}<\Xi(B_m)+\epsilon$ for $m<m^*$,
\item  
$
\frac1{|u|}\sum_{k\in u}\frac{|\{\ell\in I_k:\, q'\le q^j_\ell\}|}{|I_k|}\ \ge\  1-\loss^*-\epsilon
$ for $j<j^*$
\end{itemize}
\end{definition}
(We are only interested in $\lim_\Xi(\bar q)$
for $\bar q$ as above, so we can set $\lim_\Xi(\bar q)$
to be undefined or some arbitrary value for other $\bar q\in Q^\omega$.)

The motivation for this definition is
the following:

\begin{lemma}\label{lem:extension}
Assume that $\lim_\Xi$ is such a limit. 
Then there is a $Q$-name $\Xi^+$ such that 
for every $(\stem^*,\loss^*)$-sequence $\bar q$ 
the limit $\lim_\Xi(\bar q)$ forces 
$\Xi^+(A_{\bar q})\ge 1-\sqrt{\loss^*}$, where
\begin{equation}\label{eq:ajkrhw}
A_{\bar q}=\{k\in \omega:\,  |\{\ell\in I_k:\, q_\ell\in G\}|\ge |I_k|\cdot  (1-\sqrt{\loss^*})\}    
\end{equation}
\end{lemma}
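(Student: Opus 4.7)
The plan is to construct, for each generic $G \subseteq Q$, a FAM $\Xi^+$ in $V[G]$ extending $\Xi$ (viewed in $V[G]$ as a partial FAM with $\dom(\Xi) = \mathcal P(\omega)^V$), and then to let the $Q$-name $\Xi^+$ be a name for such a choice. For each $(\stem^*,\loss^*)$-sequence $\bar q \in V$ with $\lim_\Xi(\bar q) \in G$, form the $V[G]$-sequence $\bar a^{\bar q} = (a^{\bar q}_k)_{k \in \omega}$ given by $a^{\bar q}_k = |\{\ell \in I_k : q_\ell \in G\}|/|I_k| \in [0,1]$, and produce $\Xi^+$ using Fact~\ref{fact:FAMextensions}\ref{item:FAMsucc} so that $\Av_{\Xi^+}(\bar a^{\bar q}) \ge 1 - \loss^*$ for every such $\bar q$.

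Given such a $\Xi^+$, the lemma falls out of a Markov-type estimate. Writing $E = \omega \setminus A_{\bar q} = \{k : a^{\bar q}_k < 1 - \sqrt{\loss^*}\}$ and using $a^{\bar q}_k \le 1$,
\[
1 - \loss^* \;\le\; \Av_{\Xi^+}(\bar a^{\bar q}) \;\le\; (1 - \sqrt{\loss^*})\,\Xi^+(E) + \Xi^+(\omega \setminus E) \;=\; 1 - \sqrt{\loss^*}\,\Xi^+(E),
\]
so $\Xi^+(E) \le \sqrt{\loss^*}$ and hence $\Xi^+(A_{\bar q}) \ge 1 - \sqrt{\loss^*}$. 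Since this argument goes through in every $V[G]$ containing $\lim_\Xi(\bar q)$, the condition $\lim_\Xi(\bar q)$ forces the desired inequality.

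The real task is to verify, in $V[G]$, the hypothesis of Fact~\ref{fact:FAMextensions}\ref{item:FAMsucc}. Fix finitely many $\bar q^1,\ldots,\bar q^N \in V$ with limits in $G$, a partition $(B_m)_{m<m^*}$ of $\omega$ into elements of $\mathcal P(\omega)^V$, and $\epsilon > 0$, $k^* \in \omega$; one must produce a finite $u \subseteq \omega \setminus k^*$ approximating each $\Xi(B_m)$ within $\epsilon$ and with $\frac{1}{|u|}\sum_{k \in u} a^{\bar q^i}_k \ge 1 - \loss^*(\bar q^i) - \epsilon$ for all $i$. The strategy is genericity: the set $D \subseteq Q$ of conditions $q'$ that are either incompatible with some $\lim_\Xi(\bar q^i)$, or come paired with a finite $u \in V$, $\min u > k^*$, witnessing both $||u \cap B_m|/|u| - \Xi(B_m)| \le \epsilon$ and $\frac{1}{|u|}\sum_{k \in u}\frac{|\{\ell \in I_k : q' \le q^i_\ell\}|}{|I_k|} \ge 1 - \loss^*(\bar q^i) - \epsilon$, is dense in $Q$; a generic $q' \in G \cap D$ must come with such a $u$, and since $q' \in G$ with $q' \le q^i_\ell$ forces $q^i_\ell \in G$, the witness for $a^{\bar q^i}_k$ follows.

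The hard part will be showing this density: Definition~\ref{def:famlim} supplies a joint $u, q'$ only when the $\bar q^i$ all share a single pair $(\stem^*,\loss^*)$, while a finite subset of the relevant sequences will in general draw from several pairs. I plan to iterate the single-pair definition pair by pair — partitioning the $\bar q^i$'s by pair, applying the definition below the witness condition from the previous stage, refining the partition with the (finite, hence $\Xi$-null) pieces $u^s$ from earlier stages, and raising $k^*$ above $\max u^{s}$ to keep the pieces disjoint. The delicate accounting is balancing the sizes of the $u^t$'s so that the cross-contributions from pairs $s \neq t$ do not destroy the per-pair lower bound on the joint average; this is the technical heart of the verification and mirrors the corresponding construction in~\cite{sh}.
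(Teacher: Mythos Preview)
Your approach is exactly the paper's: work in the $Q$-extension, apply Fact~\ref{fact:FAMextensions}(\ref{item:FAMsucc}) to extend $\Xi$ to a FAM $\Xi^+$ with $\Av_{\Xi^+}(\bar a^{\bar q})\ge 1-\loss^*$, and finish with the Markov estimate you wrote out. The paper is just as terse on the multi-pair point you raise in your final paragraph --- it simply writes ``Using that $\Xi$ satisfies Definition~\ref{def:famlim}, we can apply Fact~\ref{fact:FAMextensions}(\ref{item:FAMsucc})'' and moves on --- so you are not overlooking an argument that is present in the paper; you are flagging a detail the paper leaves to the reader (or to~\cite{sh}).

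That said, your sketched fix is unlikely to work as written. Taking a disjoint union $u=\bigcup_s u^s$ with ``size balancing'' fails for a simple reason: for $\bar q^j$ in pair-class $s$, the cross-terms $\frac{1}{|u^{s'}|}\sum_{k\in u^{s'}}a^j_k$ for $s'\ne s$ carry no lower bound at all, so to get $\frac{1}{|u|}\sum_{k\in u}a^j_k\ge 1-\loss^s-\epsilon$ you would need $|u^s|/|u|\approx 1$, and this cannot hold for all $s$ at once. A route that does go through: iterate Definition~\ref{def:famlim} through the finitely many pair-classes to produce a descending chain $q\ge q_1\ge\dots\ge q_r$; after stage $s$ the values $c^j_k:=|\{\ell\in I_k: q_s\le q^j_\ell\}|/|I_k|$ (for $j$ in class $s$) are determined in $V$, and one re-applies Definition~\ref{def:famlim} for the same pair-class with the discretized level sets of $c^j$ adjoined to the partition, which certifies $\Av_\Xi(c^j)\ge 1-\loss^s-O(\epsilon)$. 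Carrying these level sets into the partition at every later stage, the \emph{final} $u=u_r$ (no union needed) approximates each $\Av_\Xi(c^j)$ and hence witnesses all earlier pair-classes, while Definition~\ref{def:famlim} directly handles the last one.
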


\begin{proof}
Work in the $Q$-extension. Now $\Xi$ is a partial FAM.
Let $J$ enumerate all suitable sequences $\bar q\in V$ with $\lim_{\Xi}(\bar q)\in G$, and for such a sequence $\bar q^j$
set $a^j_k=\frac{|\{\ell\in I_k:\, q^j_\ell\in G\}|}{|I_k|}$, 
and $b^j=1-\loss^*$.
Using that $\Xi$ satisfies Definition~\ref{def:famlim}, we can  apply 
Fact~\ref{fact:FAMextensions}(\ref{item:FAMsucc}), we can extend 
$\Xi$ to some FAM $\Xi^+$ such that 
$
\Av_{\Xi^+}(\bar a^j)\ge 1-\loss^*
$ for $j<j^*$.
So
$\Xi^+(A_{\bar q^j})+ (1-\Xi^+(A_{\bar q^j}))\cdot (1-\sqrt{\loss^*})\ge
\Av_{\Xi^+}(a^j_k)\ge 1-\loss^*$,
 and thus $\Xi^+(A_{\bar q^j})\ge 1-\sqrt{\loss^*}$.
\end{proof}

\begin{definition}
$(Q,\stem,\loss)$ as in Assumption~\ref{asm:qlim}
``has strong FAM limits for intervals'', if for every FAM $\Xi$ 
there is a function
$\lim_\Xi$ that is a strong FAM limit for intervals.
\end{definition}

\begin{lemma}\label{lem:randomhaslimits} \cite{sh}
Random forcing has strong FAM-limits for intervals.
\end{lemma}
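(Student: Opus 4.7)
\medskip

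\textbf{Proof plan.} Fix a FAM $\Xi$. My plan is to define $\lim_\Xi(\bar q)$ on every $(\stem^*,\loss^*)$-sequence of random trees $\bar q = (q_\ell)_{\ell\in\omega}$ as the canonical ``$\Xi$-averaged'' tree, and then to verify the combinatorial property of Definition~\ref{def:famlim} by combining a measure--density computation with the FAM extension principle \eqref{eq:basic}/Fact~\ref{fact:FAMextensions}. Concretely, for each $t\ge\stem^*$ I would set $\mu(t) \defeq \Av_\Xi(\Leb(q_\ell\cap[t]))$ and let $T^\infty \defeq \{t\in 2^{{<}\omega}:\,t\ge\stem^* \text{ and }\mu(s)>0 \text{ for every }\stem^*\le s\le t\}$; this is a tree with stem $\stem^*$ whose measure at $[t]$ equals $\mu(t)$, so in particular $\Leb(T^\infty)\ge \Leb([\stem^*])(1-\loss^*)$ and $\loss(T^\infty)\le\loss^*$. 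I would then declare $\lim_\Xi(\bar q) \defeq T^\infty$ (and arbitrary on non-suitable inputs).

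Next I would verify the strong-limit property. Fix $(\stem^*,\loss^*)$-sequences $\bar q^{\,j}$ ($j<j^*$), data $\epsilon,k^*,m^*,(B_m)_{m<m^*}$, and a random condition $q\le \lim_\Xi(\bar q^{\,j})$ for all $j<j^*$. The heart of the argument is the measure identity: because $q\subseteq T^{\infty,j}$ as trees and $\Leb(T^{\infty,j}\cap[t]) = \Av_\Xi(\Leb(q^{\,j}_\ell\cap[t]))$, an application of Fubini (for the FAM $\Xi$ and Lebesgue measure on $q$) gives
\[
\Av_\Xi\!\left(\frac{1}{|I_k|}\sum_{\ell\in I_k}\frac{\Leb(q\cap q^{\,j}_\ell)}{\Leb(q)}\right) \;\ge\; 1-\loss^*,
\]
for each $j<j^*$, where the outer average is taken over $k\in\omega$. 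I would then use Lemma~\ref{lem:kjwrjio} (applied to the algebra generated by the $B_m$ together with the finitely many sets $A^j_c \defeq \{k:\,\frac{1}{|I_k|}\sum_{\ell\in I_k}\Leb(q\cap q^{\,j}_\ell)/\Leb(q)\ge 1-\loss^*-\epsilon/2\}$) to extract a finite $u\subseteq\omega\setminus k^*$ simultaneously witnessing the $B_m$-approximation within $\epsilon$ and satisfying $\frac{1}{|u|}\sum_{k\in u}\frac{1}{|I_k|}\sum_{\ell\in I_k}\Leb(q\cap q^{\,j}_\ell)/\Leb(q)\ge 1-\loss^*-\epsilon$ for each $j<j^*$.

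Finally, I must upgrade the \emph{measure} statement to the required \emph{density} statement about $\{\ell\in I_k:\,q'\le q^{\,j}_\ell\}$. This is the step I expect to be the main obstacle: the natural random subcondition $q\cap q^{\,j}_\ell$ lies inside $q^{\,j}_\ell$, but a single subtree $q'\subseteq q$ can be contained in $q^{\,j}_\ell$ only if $q'\subseteq q\cap q^{\,j}_\ell$, so we cannot expect one $q'$ to be contained in many distinct $q^{\,j}_\ell$'s at once. The standard remedy (and the one I would use) is the probabilistic selection trick from \cite{sh}: consider picking a random branch $x\in\lim(q)$ according to normalised Lebesgue measure, define the random subtree $q'_x$ to be (say) the branch-tree $\{s\in q:\,s\subseteq x \text{ or } x\subseteq s\}$ thickened to a positive-measure subtree of $q$ contained in $q^{\,j}_\ell$ precisely when $x\in\lim(q\cap q^{\,j}_\ell)$, and note that the expected value of $|\{\ell\in I_k:\,q'_x\le q^{\,j}_\ell\}|/|I_k|$ equals $\frac{1}{|I_k|}\sum_{\ell\in I_k}\Leb(q\cap q^{\,j}_\ell)/\Leb(q)$. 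Averaging further over $k\in u$ and across the finitely many $j<j^*$ (absorbing another $\epsilon$), a Fubini/pigeonhole argument yields a specific branch $x$ and a corresponding $q'\le q$ achieving the required density bound simultaneously for all $j<j^*$; this $q'$ together with $u$ witnesses Definition~\ref{def:famlim}.
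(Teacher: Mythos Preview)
Your outline follows the natural strategy, but there is a genuine gap at the heart of Step~2 which then propagates to Step~3. The paper itself does not give a self-contained argument here---it simply cites \cite[2.17--2.18]{sh} for both the definition of $\lim_\Xi$ and the verification---so let me compare your sketch against what that construction actually needs.

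The problem is your definition of $T^\infty$ as the bare support tree of the averaged measure $\mu$, together with the claim that $\Leb(T^\infty\cap[t])=\mu(t)$. This equality is false in general (you only get $\Leb(T^\infty\cap[t])\ge\mu(t)$, since $\mu\le\Leb$ and $T^\infty$ is the support of $\mu$), and without it the displayed ``Fubini'' inequality does not follow from $q\subseteq T^{\infty,j}$. A concrete counterexample: take $\stem^*=\langle\rangle$, let $A=[1]\cup[00]$ and $B=[1]\cup[01]$ (both with stem $\langle\rangle$ and the same $\loss^*$), set $q_\ell=A$ for all $\ell\in I_k$ with $k$ even and $q_\ell=B$ for $k$ odd, and let $\Xi$ assign measure $\tfrac12$ to each parity class. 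Then $\mu(t)=\tfrac12\Leb(A\cap[t])+\tfrac12\Leb(B\cap[t])$ is strictly positive on every $t$, so $T^\infty$ is the full tree $2^{<\omega}$. Now $q\defeq[00]$ satisfies $q\le T^\infty$, yet
\[
\Av_\Xi\Big(\tfrac{1}{|I_k|}\sum_{\ell\in I_k}\tfrac{\Leb(q\cap q_\ell)}{\Leb(q)}\Big)=\tfrac12\cdot 1+\tfrac12\cdot 0=\tfrac12<1-\loss^*.
\]
So with your $T^\infty$ a condition $q$ can fall entirely into a region where the $\Xi$-density of ``good'' $\ell$'s is far below $1-\loss^*$. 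The construction in \cite{sh} avoids this by building $r^\otimes$ level by level, keeping only those nodes $t$ at which the \emph{relative} density $\mu(t)/\Leb([t])$ stays above an appropriate threshold; one then has to check separately that the resulting tree still has measure $\ge(1-\loss^*)\Leb([\stem^*])$, which is the real content of \cite[2.18]{sh}.

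There is also a secondary issue in Step~3. Even granting the measure inequality for each $j<j^*$, your random-branch averaging gives $E_x[X_j]\ge 1-\loss^*-\epsilon'$ for each $j$, and pigeonholing over $j$ only yields an $x$ with each $X_j(x)\ge 1-j^*(\loss^*+\epsilon')$. The factor $j^*\loss^*$ cannot be absorbed into $\epsilon$, since $j^*$, $\loss^*$, and $\epsilon$ are handed to you independently in Definition~\ref{def:famlim}. The actual argument in \cite[2.17]{sh} organises the approximation differently so that this blow-up does not occur.
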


\begin{proof}
$\lim_\Xi$ is implicitly defined in \cite[2.18]{sh}, in the following way:
Given a sequence $r_\ell$
with $(\stem(p_\ell),\loss(p_\ell))=(\stem^*,\loss^*)$,
we can set $r^*=[\stem^*]$ and $b=1-\loss^*$; and we set
$n^*_k$ such that $I_k=[n^*_k,n^*_{k+1}-1]$.
We now use these objects to apply
\cite[2.18]{sh} (note that (c)($*$) is satisfied). This gives 
$r^\otimes$, and we define $\lim_\Xi(\bar r)$ to be $r^\otimes$.

In \cite[2.17]{sh}, it is shown that this $r^\otimes$ satisfies
Definition~\ref{def:famlim}, i.e., is a limit:
If $r$ is stronger than all limits $r^{\otimes i}$, then $r$ satisfies 
\cite[2.17($*$)]{sh}.
\end{proof}

\subsection{The forcing \texorpdfstring{$\QII$}{\pdfQII}}

We now define $\QII$, a variant of the forcing notion $Q^2$ defined in~\cite{1067}:

\begin{definition}\label{def:QII}
By induction on the height $h\ge 0$, we define a compact 
homogeneous tree $T^*\subset \omega^{<\omega}$, and set 
\begin{equation}\label{eq:blaww3}
\rho(h)\defeq \max(|T^*\cap \omega^h|,h+2)
\quad\text{and}\quad
\pi(h)\defeq ((h+1)^2 \rho(h)^{h+1})^{\rho(h)^h},
\end{equation}
we set $\Omega_s$ to be the set $\{t\rhd s:\, t\in T^*\}$, i.e., the set of immediate successors of $s$,
and define for each $s$ a norm $\mu_s$ on the subsets of $\Omega_s$. In more detail:
\begin{itemize}
\item The unique element of $T^*$ of height $0$ is $\langle\rangle$, i.e.,
$T^*\cap \omega^0=\{\langle\rangle\}$. 

%
\item
We set
\[ 
a(h)=\pi(h)^{h+2},\quad M(h)=a(h)^2, \quad\text{and}\quad \mu_h(n)=\log_{a(h)}\left(\frac{M(h)}{M(h)-n}\right)
\]
for natural numbers $0\le n< M(h)$, and we set
$\mu_h(M(h))=\infty$.

\item For any $s\in T^*\cap \omega^h$, we set $\Omega_s=\{s^\frown \ell:\, \ell\in M(h)\}$
(which defines $T^*\cap \omega^{h+1}$). 
For $A\subset \Omega_s$, we set $\mu_s(A)\defeq \mu_h(|A|)$.
So $|\Omega_s|=M(h)$,  $\mu_s(\emptyset)=0$ and $\mu_s(\Omega_s)=\infty$.
Note that $|A|=|\Omega_s|\cdot\left(1-a(h)^{-\mu_s(A)}\right)$.
\end{itemize}
\end{definition}

We can now define $\QII$:
\begin{definition}
\begin{itemize}


\item
For a subtree $p\subseteq T^*$,
the stem of $p$ is the smallest splitting node. For $s\in p$, we set $\mu_s(p)=\mu_s(\{t\in p:\, t \rhd s\})$.

$\QII$ consists of subtrees $p$ with some stem $s^*$ of height $h^*$
such that $\mu_t(p)\ge 1+\frac1{h^*}$
for all $t\in p$ above the stem.
(So the only condition with $h^*=0$ is the full condition, where all norms are $\infty$.)

$\QII$ is ordered by inclusion.
\item $\stem(p)$ is the stem of $p$.

$\loss(p)$ is defined if there is an $m\ge 2$ satisfying the following,
and in that case $\loss(p)=\frac1m$ for the maximal such $m$:
\begin{itemize}
\item $p$ has stem $s^*$ of height $h^*>3m$,
\item $\mu_s(p)\ge 1+\frac1m$ for all $s\in p$ of height ${\ge}h^*$.
\end{itemize}
We set $Q'=\dom(\loss)$.
\end{itemize}
\end{definition}
By simply extending the stem, we can find for any $p\in \QII$
and $\epsilon>0$ some $q\le p$ in $Q'$ with $\loss(q)<\epsilon$;
i.e., one of the assumptions in~\ref{asm:qlim} is satisfied. 
(The other one is dealt with in Lemma~\ref{lem:QIIbasic}(\ref{item:linked}).)
In particular $Q'\subseteq\QII$ is dense.

We list a few trivial properties of the loss function:
\begin{facts}\label{fact:loss}
Assume $p\in Q'$ with $s=\stem(p)$ of height $h$.
\begin{enumerate}[(a)]
\item\label{item:losstrivialities}
$\loss(p)<1$, $\mu_s(p)\ge 1+\loss(p)$  for any $s$ above the stem,
and $\loss(p)>\frac{3}{h}$. 
\item\label{item:lossbla}
If $q$ is a subtree of $p$ such that all
norms above the stem are $\ge 1+\loss(p)-\frac{2}h$,
then $q$ is a valid $\QII$-condition.
\item\label{item:productlimit}
    $\prod_{\ell =h}^{\infty} (1-\frac1{\ell^2})=1-\frac1{h}>1-\frac{\loss(p)}{3}$.
\end{enumerate}
\end{facts}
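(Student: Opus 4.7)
The plan is to read each clause directly off the definition of $\loss$ and then use (a) to bootstrap (b) and (c).

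For (a), I would just unpack the definition. Writing $\loss(p) = 1/m$ for the maximal admissible $m \geq 2$, the bound $\loss(p) \leq 1/2 < 1$ is immediate from $m \geq 2$. The second clause $\mu_s(p) \geq 1 + \loss(p)$ for $s$ above the stem is exactly the second bullet in the definition of $\loss$, once one observes that ``above the stem'' in the statement and ``of height ${\ge}h^*$'' in the definition coincide (since $h^* = |\stem(p)|$). For the third clause, the first bullet of the definition gives $h = h^* > 3m$, i.e., $3/h < 1/m = \loss(p)$.

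For (b), the issue to watch is that the stem of $q$ need not equal $\stem(p)$: as $q \subseteq p$ is a subtree, its own stem $s^*_q$ extends $\stem(p)$, so $h^*_q \defeq |s^*_q| \geq h$. To check that $q$ is a valid $\QII$-condition it suffices to show $\mu_t(q) \geq 1 + 1/h^*_q$ for every $t \in q$ above $s^*_q$. By hypothesis the left side is at least $1 + \loss(p) - 2/h$, and by part (a) we have $\loss(p) - 2/h > 3/h - 2/h = 1/h \geq 1/h^*_q$, which is exactly what is needed.

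For (c), the telescoping is the only thing to do: write $1 - 1/\ell^2 = \frac{(\ell-1)(\ell+1)}{\ell^2}$, split the finite product $\prod_{\ell=h}^{N}$ into the two telescoping pieces $\prod \frac{\ell-1}{\ell}$ and $\prod \frac{\ell+1}{\ell}$, obtaining $\frac{(h-1)(N+1)}{h N}$, and let $N\to\infty$ to get $\frac{h-1}{h} = 1 - \frac{1}{h}$. The final inequality $1 - \frac{1}{h} > 1 - \frac{\loss(p)}{3}$ is just a rearrangement of $\loss(p) > 3/h$, which is part (a).

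There is no real obstacle here: each clause is either a direct unpacking of the definition of $\loss$ or, for (b) and (c), a one-line deduction from clause (a). The only mild subtlety is in (b), namely noticing that the stem of the subtree $q$ may move strictly above $\stem(p)$ so one has to use $1/h^*_q \leq 1/h$ rather than an equality.
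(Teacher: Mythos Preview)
Your proof is correct. The paper does not give a proof of these facts at all; it introduces them with ``We list a few trivial properties of the loss function'' and states them without argument. Your unpacking of each clause from the definition of $\loss$, and the use of (a) to derive (b) and (c), is exactly the intended verification, including the observation in (b) that the stem of $q$ may sit strictly above $\stem(p)$.
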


\begin{lemma}\label{lem:ramsey}
Let $s\in T^*$ be of height $h$ and $A\subset\Omega_s$.
\begin{enumerate}[(a)]
\item\label{item:count}
If $\mu_s(A)\ge 1$, then
$|A|\ge |\Omega_s|\cdot (1-\frac1{h^2})$.
\item\label{item:minusone} If $A\subsetneq \Omega_s$, i.e., $A$ is a proper subset, then $\mu_s(A\setminus \{t\})> \mu_s(A)-\frac1h$ for $t\in A$.
\item\label{item:intersecth} For $i<\pi(h)$, assume that
$A_i\subseteq \Omega_s$ satisfies $\mu_s(A_i)\ge x$. 
Then $\mu_s(\bigcap_{i\in \pi(h)} A_i)> x-\frac1h$.

\item\label{item:ramsey}
For $i<I$ (an arbitrary finite index set)
pick proper subsets $A_i\subsetneq \Omega_s$ 
such that $\mu_s(A_i)\ge x$,
and assign weighs $a_i$ to $A_i$ such that $\sum_{i\in I}a_i=1$. Then
%
%
\begin{equation}\label{eq:fubini}
\mu_s(B)> x-\frac1h \quad\text{for}\quad B\defeq \Big\{t\in \Omega_s:\, \sum_{t\in A_i}a_i > 
1-\frac1{h^2}
\Big\}.
\end{equation}

\end{enumerate}
\end{lemma}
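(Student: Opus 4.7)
The plan is to translate each statement into a counting inequality on $\Omega_s$ via the identity $|\Omega_s\setminus A| = |\Omega_s|\cdot a(h)^{-\mu_s(A)}$ coming from Definition~\ref{def:QII}, perform an elementary estimate, and then read the result back as a norm bound through $\mu_s(A) = -\log_{a(h)}(|\Omega_s\setminus A|/|\Omega_s|)$. The only facts about parameters I will need are $a(h) = \pi(h)^{h+2}$ and $\pi(h)\ge (h+1)^2$; together these give ample slack, in particular $a(h)\ge h^2$, $a(h)>2^h$, and $\log\pi(h)\ge 2\log h$.

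Parts (a)--(c) are then direct. For (a), $\mu_s(A)\ge 1$ gives $|\Omega_s\setminus A|\le |\Omega_s|/a(h)\le |\Omega_s|/h^2$. For (b), set $n=|A|$; since $A\subsetneq\Omega_s$ forces $M(h)-n\ge 1$,
\[
\mu_s(A)-\mu_s(A\setminus\{t\}) \;=\; \log_{a(h)}\!\frac{M(h)-n+1}{M(h)-n} \;\le\; \log_{a(h)}(2) \;<\; \frac{1}{h},
\]
the last inequality being $a(h)>2^h$. For (c), a union bound gives $|\Omega_s\setminus \bigcap_{i<\pi(h)} A_i|\le \pi(h)\cdot|\Omega_s|\cdot a(h)^{-x}$, so $\mu_s(\bigcap_i A_i)\ge x - \log_{a(h)}\pi(h) = x - 1/(h+2) > x - 1/h$.

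Part (d) is the main step and is essentially a weighted Markov estimate. Define $g\colon \Omega_s\to[0,1]$ by $g(t)=\sum_{i:\, t\notin A_i} a_i$, so that $\Omega_s\setminus B = \{t:\, g(t)\ge 1/h^2\}$. Swapping the order of summation (a finite Fubini) and plugging in the individual bounds $|\Omega_s\setminus A_i|\le |\Omega_s|\cdot a(h)^{-x}$,
\[
\frac{1}{|\Omega_s|}\sum_{t\in\Omega_s} g(t) \;=\; \sum_{i\in I} a_i \cdot \frac{|\Omega_s\setminus A_i|}{|\Omega_s|} \;\le\; a(h)^{-x},
\]
and Markov's inequality then gives $|\Omega_s\setminus B|\le h^2\cdot|\Omega_s|\cdot a(h)^{-x}$. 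Provided $B\subsetneq\Omega_s$ this reads $\mu_s(B)\ge x - \log_{a(h)}(h^2)$, and
$\log_{a(h)}(h^2) = \tfrac{2\log h}{(h+2)\log\pi(h)} \le \tfrac{1}{h+2} < \tfrac{1}{h}$
by $\log\pi(h)\ge 2\log h$. The degenerate cases are immediate: $B=\Omega_s$ yields $\mu_s(B)=\infty$, and $B=\emptyset$ forces $1\le h^2 a(h)^{-x}$, hence $x\le \log_{a(h)}(h^2)<1/h$, so $0>x-1/h$. There is no serious obstacle beyond bookkeeping — the only care needed is to keep the estimates strictly below $1/(h+2)$ throughout so that the gap to $1/h$ demanded by the statement is never lost.
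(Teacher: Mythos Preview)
Your proof is correct and follows essentially the same route as the paper. Parts (a)--(c) are identical; in (d) the paper double-counts $\sum_i a_i|A_i|$ while you double-count the complementary quantity $\sum_i a_i|\Omega_s\setminus A_i|$ and phrase the conclusion as Markov's inequality, arriving at the same bound $|\Omega_s\setminus B|\le h^2|\Omega_s|\,a(h)^{-x}$; your treatment of the degenerate cases $B=\Omega_s$ and $B=\emptyset$ is a nice touch the paper omits.
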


\begin{proof}
\begin{enumerate}[(a)]
\item
Trivial, as $a(h)^{-\mu_s(A)}\le \frac1{a(h)}<\frac1{h^2}$.
\item
$
\mu_s(A\setminus \{t\})
=\log_{a(h)}(|\Omega_s|)-\log_{a(h)}(|\Omega_s|-|A|+1)
\ge
$\\\phantom{.}\hfill$\ge
\log_{a(h)}(|\Omega_s|)-\log_{a(h)}(2(|\Omega_s|-|A|))
\ge  \mu_s(A)-\log_{a(h)}(2) >  \mu_s(A)-\frac1h$.
\item
$\mu_s(\bigcap_{i\in \pi(h)} A_i)=
\log_{a(h)}(|\Omega_s|)-\log_{a(h)}(|\Omega_s|-|\bigcap_{i\in \pi(h)} A_i|)
=
$\\\phantom{.}\hfill$=
\log_{a(h)}(|\Omega_s|)-\log_{a(h)}(|\bigcup_{i\in \pi(h)} (\Omega_s-A_i)|)
\ge 
$\hfill\phantom{.}\\\phantom{.}\hfill$\ge
\log_{a(h)}(|\Omega_s|)-\log_{a(h)}\big(\pi(h)\cdot \max_{i\in \pi(h)} |\Omega_s-A_i|\big)
\ge x -\log_{a(h)}(\pi(h))
> x - \frac1h$.
\item
Set $y=\sum_{i\in I} a_i\cdot |A_i|$. On the one hand, $y\ge |\Omega_s|\cdot (1-a(h)^{-x})$.
On the other hand, $y= \sum_{t\in \Omega_s}\sum_{t\in A_i}a_i
\le |B|+ (|\Omega_s\setminus B|)\cdot(1-\frac1{h^2})$.

So $|B|\ge |\Omega_s|(1-h^2 a(h)^{-x})> |\Omega_s|(1- a(h)^{-(x-\frac1h)})$,
as $a(h)^{\frac1h}> \pi(h)> h^2$.\qedhere
\end{enumerate}
\end{proof}

$\QII$ is not $\sigma$-centered, but it satisfied a property,
first defined in~\cite{kamo}, which
is between $\sigma$-centered and $\sigma$-linked:
\begin{definition}\label{def:pirholinked}
Fix $f,g$ functions from $\omega$ to $\omega$ converging to infinity.
$Q$ is $(f,g)$-linked
if there are $g(i)$-linked $Q^i_j\subseteq Q$ for $i<\omega,j<f(i)$
such that each $q\in Q$ is in every $\bigcup_{j<f(i)}Q^i_j$ for sufficiently large $i$.
\end{definition}

Recall that we have defined
$\rho$ and $\pi$ in~\eqref{eq:blaww3}.

\begin{lemma}\label{lem:QIIbasic}
\begin{enumerate}[(a)]
\item\label{item:linked}
If $\pi(h)$ many conditions $(p_i)_{i\in \pi(h)}$ have a common node $s$ above their stems, $|s|=h$,
then there is a $q$ stronger than each $p_i$.
\item\label{item:pr} $\QII$ is $(\rho,\pi)$-linked 
(In particular it is ccc).
\item\label{item:ed} The $\QII$-generic real $\eta$ is eventually different (from every real in $\lim(T^*)$, and therefore from every real in $\omega^\omega$ as well).
\item\label{item:ghgh}
$\Leb(p)\ge \Leb\big([\stem(p)]\big)\cdot \big(1-\frac12\loss(p)\big)$; more explicitly:
for any $h>|\stem(p)|$,
\[
\frac{|p\cap \omega^h|}{|T^*\cap \omega^h\cap [\stem(p)]|}\ge 1-\frac12\loss(p).
\]
\item\label{item:subrand}  
$Q'$ (which is a dense subset of $\QII$) is an incompatibility-preserving subforcing of random forcing,
where we use the variant\footnote{We can use Definition~\ref{def:random}, replacing $2^\omega$ with
$\lim(T^*)$.} of random forcing on $\lim(T^*)$ instead of $2^\omega$.
Let $B'$ be the the sub-Boolean-algebra 
of $\mathrm{Borel}/\mathrm{Null}$ generated by 
$\{\lim(q):\, q\in Q'\}$.
Then $Q'$ is dense in $B'$.
\end{enumerate}
\end{lemma}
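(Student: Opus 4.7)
The central technical step is part (a); everything else either follows immediately or is a variant of the same intersect-and-extend-the-stem scheme. For (a), given $\pi(h)$ conditions $p_i$ sharing the node $s$ above their stems, I would take $r := \bigcap_i p_i$ restricted to $[s]$. At each node $t \in r$ of height $h' \ge h$, this intersection involves $\pi(h) \le \pi(h')$ subsets of $\Omega_t$ each of norm $\ge 1 + 1/h_i^* \ge 1 + 1/h$, so Lemma~\ref{lem:ramsey}(\ref{item:intersecth}) (used with the sharper constant $\log_{a(h')}(\pi(h')) = 1/(h'+2)$ that appears in its proof, rather than the looser $1/h'$ stated) gives $\mu_t(r) \ge 1 + 1/h - 1/(h'+2)$. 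At $t = s$ this falls short of the required $1 + 1/h$, so $r$ is not yet a valid condition with stem $s$. The remedy is to extend the stem: since $r$ has norms $>1$ at every level above $s$, it has infinite branches above $s$; pick $s^+ \in r$ of height $h^+ := 2h$ and let $q$ consist of the unique path from $\langle\rangle$ to $s^+$ together with $r \cap [s^+]$. The required bound $\mu_t(q) \ge 1 + 1/h^+$ for $t \in q$ at level $h' \ge h^+$ then reduces to $1/(h'+2) \le 1/h - 1/h^+$, which holds at $h' = h^+ = 2h$ and only improves as $h'$ grows.

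Part (b) follows from (a) by an obvious linking scheme: for each $i$ enumerate $T^* \cap \omega^i = \{t^i_j : j < k_i\}$ with $k_i \le \rho(i)$, and set $Q^i_j := \{p \in \QII : t^i_j \in p \text{ and } t^i_j \ge \stem(p)\}$ (and $Q^i_j := \emptyset$ for $k_i \le j < \rho(i)$); part (a) makes each $Q^i_j$ a $\pi(i)$-linked family, and every $p$ lies in some $Q^i_j$ once $i \ge |\stem(p)|$. Part (c) is a density argument using (a) once more: given $p$ and $x \in \omega^\omega$ from the ground model, extend $\stem(p)$ to $s^+$ at height $h^+ := 2|\stem(p)|$, and form $q$ by deleting the successor $t^\frown x(|t|)$ from $\Omega_t$ at each $t \in q$ of height $\ge h^+$; by Lemma~\ref{lem:ramsey}(\ref{item:minusone}) each deletion costs $< 1/|t|$ in norm, so $\mu_t(q) > 1 + 1/|\stem(p)| - 1/|t| \ge 1 + 1/h^+$, and $q$ forces $\eta(n) \ne x(n)$ for all $n \ge h^+$. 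Part (d) is an induction on height using Lemma~\ref{lem:ramsey}(\ref{item:count}): the fraction of $\Omega_t$ retained at each node $t$ of height $\ell \ge |\stem(p)|$ is $\ge 1 - 1/\ell^2$, so telescoping gives $\Leb(p)/\Leb([\stem(p)]) \ge \prod_{\ell = |\stem(p)|}^\infty (1 - 1/\ell^2) = 1 - 1/|\stem(p)| > 1 - \loss(p)/3$ via Facts~\ref{fact:loss}(\ref{item:losstrivialities})(\ref{item:productlimit}), which exceeds $1 - \loss(p)/2$.

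For (e), that $Q'$ is a subforcing of random forcing on $\lim(T^*)$ is immediate from (d) applied to the subcondition $p \cap [t]$ for each $t \in p$ (a $\QII$-condition with stem $\max(t,\stem(p))$); this gives $\Leb(\lim(p) \cap [t]) > 0$. Incompatibility preservation is the two-condition analogue of (a): if $\Leb(\lim(p) \cap \lim(q)) > 0$ then WLOG $\stem(p) \le \stem(q) \in p$, and the intersection restricted to $[\stem(q)]$ has $\mu_t \ge 1 + \min(\loss(p), \loss(q)) - 1/(|t|+2)$ for $t \ge \stem(q)$, so extending the stem yields a common $Q'$-refinement. Density of $Q'$ in $B'$: each nonzero $B \in B'$ contains a nonzero Boolean atom $\bigcap_{F^+} \lim(q) \setminus \bigcup_{F^-} \lim(q^-)$, so I would first use the compatibility argument to find a $Q'$-refinement of $F^+$ and then iteratively shrink it using Lebesgue density inside the subtrees $T^*_{\ge t}$ (themselves $Q'$-conditions for $|t|$ large) to miss each $\lim(q^-)$. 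The main obstacle is the norm-accounting in (a): the calibration $a(h) = \pi(h)^{h+2}$ in~\eqref{eq:blaww3} is exactly what makes $h^+ = 2h$ absorb the $1/(h'+2)$ error from Lemma~\ref{lem:ramsey}(\ref{item:intersecth}), and once (a) is in hand the rest is routine bookkeeping on the same template.
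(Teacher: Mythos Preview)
Your treatment of (a)--(d) and the incompatibility-preservation half of (e) is correct and matches the paper's argument essentially line for line; the only cosmetic difference is that you track the sharper loss $1/(h'+2)$ from the proof of Lemma~\ref{lem:ramsey}(\ref{item:intersecth}) where the paper uses the coarser $1/h'$, and both suffice for the stem-doubling $h^+ = 2h$.

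There is, however, a genuine gap in your argument for density of $Q'$ in $B'$. You propose to \emph{first} use the compatibility argument to produce a $Q'$-refinement $q^*$ of $F^+$, and \emph{then} shrink $q^*$ to avoid each $q^- \in F^-$. But the compatibility argument only tells you that the conditions in $F^+$ have \emph{some} common $Q'$-extension; it does not control which one you get, and nothing prevents $\lim(q^*) \subseteq \lim(q^-)$ for some $q^- \in F^-$. In that case no further shrinking of $q^*$ can miss $\lim(q^-)$, and invoking Lebesgue density of the atom does not help once you have already committed to $q^*$. The paper avoids this by reversing the order: first pick any point $r$ in the positive-measure atom $A = \bigcap_{F^+}\lim(q) \setminus \bigcup_{F^-}\lim(q^-)$; then for $h$ large enough the node $s = r\restriction h$ lies in every $q \in F^+$ (since $r \in \lim(q)$) and in no $q^- \in F^-$ (since $r \notin \lim(q^-)$). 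Now apply part (a) to the finitely many $q \cap [s]$ with $q \in F^+$, and the resulting common refinement automatically has $\lim$-set inside $[s]$, hence disjoint from every $\lim(q^-)$. Your ``Lebesgue density'' intuition points at the right object (a point of $A$), but it has to be invoked \emph{before} refining $F^+$, not after.
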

(Here, 
$\mathrm{Borel}$ refers to the set of Borel subsets of $\lim(T^*)$.
In the following proof, we will denote the equivalence class
of a Borel set $A$ by $[A]_\mathcal N$.)

\begin{proof}
\begin{enumerate}[(a)]
\item
Set $S=[s]\cap \bigcap_{i<\pi(h)} p_i$.
According to~\ref{lem:ramsey}(\ref{item:intersecth}),
for each $t\in S$ of height $h'\ge h$,
the successor set has norm bigger than $1+\nicefrac1h-\nicefrac1{h'}> 1$, 
so in particular there is a branch $x\in S$,
and $S\cap [x\restriction 2h]$ is a valid condition
stronger than all $p_i$.
\item
For each $h\in\omega$, enumerate
$T^*\cap \omega^h$ as $\{s^h_1,\dots,s^h_{\rho(h)}\}$,
and set $Q^h_i=\{p\in \QII:\, s^h_i\in p\text{ and }|\stem(p)|\le h\}$.
So for all $h$,
$Q^h_i$ is $\pi(h)$-linked, and $p\in \bigcup_{i<\rho(h)}Q^h_i$ for
all $p\in Q$ with $|\stem(p)|\le h$.
\item
Use \ref{lem:ramsey}(\ref{item:minusone}).

\item 
Use \ref{lem:ramsey}(\ref{item:count}) and the definition of $\loss$.
\item
As in the previous item, we get that $\Leb(p\cap [t])>0$ whenever $p\in Q'$
and $t\in p$. So $Q'$ is a subset of random forcing. As both sets are ordered by inclusion,
$Q'$ is a subforcing.
If $q_1,q_2\in Q'$ and 
$q_1,q_2$ are compatible as a random conditions, then $q_1\cap q_2$ has arbitrary high nodes, in particular a node above both stems,
which implies that $q_1$ is compatible with $q_2$ in $\QII$ and therefore in $Q'$.
It remains to show that $Q'$ is dense in $B'$. 
It is enough to show:
If $x\ne 0$ in $B'$ has the form
$x=\bigwedge_{i<i^*}[\lim(q_i)]_\mathcal N\wedge \bigwedge_{j<j^*}[\lim(T^*)\setminus \lim(q_j)]_\mathcal N$
then there is some $q\in Q'$ with $[\lim(q)]_\mathcal N<x$.
Note that $0\ne x=[A]_\mathcal N$
for $A= \lim\left(\bigcap_{i<i^*}q_i\right)\setminus \bigcup_{j<j^*}\lim(q_j)$, 
so pick some $r\in A$ and pick $h>i^*$ large enough such that $s=r\restriction h$ is 
not in any $q_j$.
Then any $q\in Q'$ stronger than all $ q_i\cap [s]$ (for $i<i^*$) is as required.
\qedhere
\end{enumerate}
\end{proof}

\begin{lemma}\label{lem:QIIhaslimits}
$\QII$ has strong FAM-limits for intervals.
\end{lemma}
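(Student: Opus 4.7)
The plan is to follow the scheme of Lemma~\ref{lem:randomhaslimits} (i.e., \cite[2.17--2.18]{sh}), adapting the Lebesgue-measure arguments for random forcing to the norms $\mu_t$ of $\QII$. The essential new ingredient is the use of the Fubini-type Lemma~\ref{lem:ramsey}(\ref{item:ramsey}) in place of classical Fubini on $\lim(T^*)$.

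First I define $\lim_\Xi(\bar q)$ for a $(\stem^*,\loss^*)$-sequence $\bar q=(q_\ell)_\ell$ with $h^*=|\stem^*|$. Writing $\alpha^k_t:=|\{\ell\in I_k:t\in q_\ell\}|/|I_k|$, the tree $q^\otimes:=\lim_\Xi(\bar q)$ is built recursively: $q^\otimes$ contains $\stem^*$, and for $t\in q^\otimes$ above the stem at height $h$ a successor $t'\rhd t$ is included iff $\Av_\Xi(\alpha^{\cdot}_{t'})\ge(1-1/h^2)\,\Av_\Xi(\alpha^{\cdot}_t)$. To check $q^\otimes\in\QII$, I prove by induction on $h\ge h^*$ that $\mu_t(q^\otimes\cap\Omega_t)>1+\loss^*-1/h$. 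Partition the indices $\ell$ with $t\in q_\ell$ by which subset of $\Omega_t$ the intersection $q_\ell\cap\Omega_t$ equals (only finitely many subsets, since $|\Omega_t|<\omega$); each such subset has $\mu_t$-norm $\ge 1+\loss^*$, and the corresponding $\Xi$-averaged densities, normalized by $\Av_\Xi(\alpha^{\cdot}_t)$, give weights summing to $1$. Lemma~\ref{lem:ramsey}(\ref{item:ramsey}) then yields the required norm bound, since the Fubini set coincides by construction with $q^\otimes\cap\Omega_t$. The infinite-product estimate in Facts~\ref{fact:loss}(\ref{item:productlimit}) keeps $\Av_\Xi(\alpha^{\cdot}_t)>1-1/h^*>1-\loss^*/3>0$ throughout, so the weights are meaningful, and $\loss^*>3/h^*$ gives $\mu_t(q^\otimes)>1+1/h^*$.

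For the strong FAM limit property, given $(\bar q^j)_{j<j^*}$, $\epsilon,k^*,(B_m)_{m<m^*}$, and $q\le q^{\otimes j}$, fix a finite subtree $S\subseteq q$ (a trunk extending to sufficient height) and apply Lemma~\ref{lem:kjwrjio} to the Boolean algebra generated by the $B_m$'s together with the finitely many rational-valued level sets of the functions $k\mapsto\alpha^{j,k}_t$ for $t\in S$ and $j<j^*$. This produces $u\subseteq\omega\setminus k^*$ of bounded size with $|u\cap B_m|/|u|$ within $\epsilon$ of $\Xi(B_m)$ and $\frac{1}{|u|}\sum_{k\in u}\alpha^{j,k}_t$ within $\epsilon$ of $\Av_\Xi(\alpha^{j,\cdot}_t)$. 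Then set $L^j:=\bigcap_{t\in S}\{\ell:t\in q^j_\ell\}$ and $q':=q\cap\bigcap_{j<j^*}\bigcap_{\ell\in L^j\cap\bigcup_{k\in u}I_k}q^j_\ell$, restricted above a stem $s^\dagger\in q$ of height $h^\dagger$. A union bound inside each $I_k$ combined with the averaging above gives $\frac{1}{|u|}\sum_{k\in u}|I_k\cap L^j|/|I_k|\ge 1-|S|/h^*-\epsilon\ge 1-\loss^*-\epsilon$ once $|S|\le h^*\loss^*$, which is permitted by $\loss^*>3/h^*$. Iterating Lemma~\ref{lem:ramsey}(\ref{item:intersecth}) in groups of $\pi(h)$ at each level $h\ge h^\dagger$ bounds the intersection norm below by $1+\loss^*-\lceil\log_{\pi(h)}(j^*|L^j|)\rceil/h$; taking $h^\dagger$ so that $\pi(h^\dagger)^{h^\dagger\loss^*}$ exceeds $j^*|L^j|$—possible because $\pi$ grows doubly exponentially by \eqref{eq:blaww3}—makes $q'$ a valid $\QII$-condition. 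By construction $q'\le q^j_\ell$ for every $\ell\in L^j\cap\bigcup_{k\in u}I_k$, yielding the averaging bound.

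The main obstacle is the simultaneous balancing of $|S|$, $|u|$, $|L^j|$ and $h^\dagger$: the trunk $S$ controls how finely the $k$-averages over $u$ approximate the $\Xi$-averaged densities (and hence how close the final average comes to $1-\loss^*-\epsilon$), but $|u|$ depends via Lemma~\ref{lem:kjwrjio} on the size of the Boolean algebra generated by $S$, then $|L^j|$ is constrained by $\sum_{k\in u}|I_k|$, and $h^\dagger$ in turn must dominate $j^*|L^j|$ in the sense that $\pi(h^\dagger)^{h^\dagger\loss^*}>j^*|L^j|$. The rapid doubly-exponential growth of $\pi$ absorbs these nested dependencies and makes the book-keeping feasible, paralleling the role of the Lebesgue-measure estimates in the random-forcing case~\cite[2.18]{sh}.
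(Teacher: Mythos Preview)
Your construction of $q^\otimes$ via the $\Xi$-averaged densities $\Av_\Xi(\alpha^\cdot_t)$ is sound; it differs only cosmetically from the paper's, which first builds for each $k$ a single condition $q_k$ from the finitely many $(q_\ell)_{\ell\in I_k}$ (so that $q_k$ already forces a large fraction of these into $G$) and only afterwards uses $\Xi$ to combine the $q_k$'s into the limit. Both routes yield a valid condition via Lemma~\ref{lem:ramsey}(\ref{item:ramsey}).

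The gap is in your verification of the strong limit property. You need a common node $s^\dagger$ at height $h^\dagger$ lying in $q$ and in every $q^j_\ell$ being intersected, with $\pi(h^\dagger)$ large enough for Lemma~\ref{lem:QIIbasic}(\ref{item:linked}). If $s^\dagger$ is the top of your trunk $S$, then $\ell\in L^j$ does guarantee $s^\dagger\in q^j_\ell$, but now $h^\dagger$ is fixed \emph{before} $u$, while the number of conditions to intersect is $j^*\cdot\sum_{k\in u}|I_k|$ --- and Lemma~\ref{lem:kjwrjio} bounds only $|u|\le M$, not $\max(u)$: the elements of $u$ depend on the particular sets being approximated (which here involve $s^\dagger$), so $\sum_{k\in u}|I_k|$ is uncontrolled. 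Taking $h^\dagger$ strictly above $S$ instead loses the membership guarantee $s^\dagger\in q^j_\ell$. The doubly-exponential growth of $\pi$ does not dissolve this circularity $S\to u\to \sum_{k\in u}|I_k|\to h^\dagger\to S$. The paper's two-stage construction breaks it precisely here: one fixes $M$ from Lemma~\ref{lem:kjwrjio} for $N=m^*+j^*$ (independent of any node), picks $s\in q$ at height $h>M\cdot j^*$, and then intersects only with the \emph{intermediate} conditions $q^j_k$ for $k\in u$ --- at most $M\cdot j^*+1$ conditions, regardless of which $k$'s land in $u$ --- each of which already forces that a large fraction of $(q^j_\ell)_{\ell\in I_k}$ lie in $G$. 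Without that intermediate layer your direct intersection does not go through.
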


\begin{proof}
Let $(p_\ell)_{\ell\in\omega}$ 
be a $(s^*,\loss^*)$-sequence, 
$s^*$ of height $h^*$. 
Set
$\tilde\zeta^{h^*}=0$ and 
\[\textstyle\tilde\zeta^h\defeq 1-\prod_{m=h^*}^{h-1} (1-\frac{1}{m^2})\text{ for }h>h^*.\]
This is a strictly increasing sequence below $\frac13\loss^*$, cf.\ Fact~\ref{fact:loss}(\ref{item:productlimit}).
Also,
all norms in all conditions of the sequence are at least $1+\loss^*$,
cf.\ Fact~\ref{fact:loss}(\ref{item:losstrivialities}).

We will first construct $(q_k)_{k\in\omega}$  with stem $s^*$
and all norms  $>1+\loss^*-\frac{1}{h^*}$
such that $q_k$ forces
$\frac{|\{\ell\in I_k:\, p_\ell\in G\}|}{|I_k|}> 1-\frac13\loss^*$.
We will then use $\bar q$ to define $\lim_\Xi(\bar p)$, 
and in the third step show that it is as required.

\textbf{Step 1:}
So let us define $q_k$. Fix $k\in\omega$.
\begin{itemize}
\item  Set $X_{t}=\{\ell\in I_k:\, t\in p_\ell\}$ and 
$Y_{h}=\left\{ t\in [s^*]\cap \omega^h:\, |X_{t}|\ge |I_k|\cdot (1- \tilde\zeta^h) \right\}$.
\item We define $q_k$ by induction on the level, such that
$q_k\cap \omega^h\subseteq Y_h$. The stem is $s^*$. (Note that 
 $X_{s^*}=I_k$ and so $s^*\in Y_{h^*}$.) 
For $s\in q_k\cap \omega^h$ (and thus, by induction hypothesis, in $Y_h$),
we set $q_k\cap [s]\cap \omega^{h+1}=[s]\cap Y_{h+1}$, i.e., a successor $t$ of $s$ is in $q_k$ iff it is $Y_{h+1}$. Then $\mu_s(q_k)> 1+\loss^*-\frac{1}{h}$.

Proof: Set $I=X_s$. By induction, $|X_s|\ge |I_k|\cdot (1-\tilde\zeta^h)$.
For $\ell\in I$, set $A_\ell= p_\ell\cap [s]\cap \omega^{h+1}$, i.e., 
the immediate successors of $s$ in $p_\ell$. 
Obviously $\mu_s(A_\ell)\ge 1+\loss^*$.
We give each $A_\ell$ equal weight $a_\ell=\frac1{|I|}$.
According to~\eqref{eq:fubini}, the set $B=\{t\rhd s:\, |\{\ell\in X_s:\, t\in A_\ell\}|\ge |I|\cdot (1-\frac1{h^2})\}$ 
has norm $>1+\loss^*-\frac1h$.
\item $q_k$ 
forces that $p_\ell\in G$ for $\ge |I_k|\cdot (1-\frac12\loss^*)$ many $\ell\in I_k$.

Proof: Let $r<q_k$ have stem $s'$ of length $h'$, without loss of generality $h'>|I_k|+1$. 
As $s'\in Y_{h'}$, there are $>|I_k|\cdot (1-\frac13\loss^*)$ many 
$\ell\in I_k$ such that $s'\in p_\ell$. 
So we can find a a condition $r'$ stronger than $r$ and all these
$p_\ell$ (as these are at most $|I_k|+1\le h'$ many conditions all
containing $s'$ above the stem).
\end{itemize}

\textbf{Step 2:}
Now we use $(q_k)_{k\in\omega}$ to construct by induction on the height
$q^*=\lim_\Xi(\bar p)$,
a condition with stem $s^*$ and all norms ${\ge}1+\loss^*-\frac2h$ 
such that for all $s\in q^*$ of height $h\ge h^*$,
\begin{equation}\tag{$*$}\label{eq:Z}
\Xi(Z_s)\ge 1-\tilde\zeta^h
%
,\ \text{for}\quad
Z_s\defeq\{k\in\omega:\, s\in q_k\}.
\quad\text{So }
\Xi(Z_s)>1-\frac13\loss^*.
\end{equation}
Note that $Z_{s^*}=\omega$, so~\eqref{eq:Z} is satisfied for $s^*$.
Fix an $s\ge s^*$ satisfying~\eqref{eq:Z}. Set
$A(k)$ to be the $s$-successors in $q_k$
for each $k\in Z_s$.
Enumerate the (finitely many) $A(k)$
as $(A_i)_{i\in I}$.
Clearly $\mu_s(A_i)> 1+\loss^*-\frac1h$. Assign to $A_i$ the weight
$a_i=\frac1{\Xi(Z_s)}\Xi(\{k\in Z_s:\, A(k)=A_i\})$.
Again using~\eqref{eq:fubini}, 
$\mu_s(B)\ge 1+\loss^*-\frac2h$, where $B$ consists of those successors $t$ of $s$
such that 
\[
1-\frac1{h^2}<\sum_{t\in A_i}a_i = \frac1{\Xi(Z_s)}\Xi(\{k\in Z_s:\, t\in q_k\})\le \frac1{\Xi(Z_s)}\Xi(Z_t)
.\] 
So every $t\in B$ satisfies $\Xi(Z_t)>\Xi(Z_s)(1-\frac{1}{h^2})\ge \tilde\zeta^{h+1}$, i.e., satisfies~\eqref{eq:Z}.
So we can use $B$ as the set of $s$-successors in $q^*$. 

This defines $q^*$, which is a valid condition by Fact~\ref{fact:loss}(\ref{item:lossbla}).

\textbf{Step 3:}
We now show that this limit works: As in Definition~\ref{def:famlim}, fix $m^*$, $(B_m)_{m<m^*}$, $\epsilon$, $k^*$, $i^*$ and
sequences $(p^i_\ell)_{\ell<\omega}$ for $i<i^*$,
such that $(\stem(p^i_\ell),\loss(p^i_\ell))=(\stem^*,\loss^*)$.

For each $i<i^*$,
$\bar q^i=(q^i_k)_{k\in\omega}$ is defined from $\bar p^i=(p^i_\ell)_{\ell\in\omega}$,
and in turn defines the limit $\lim_\Xi(\bar p^i)$.
Let $q$ be stronger than all $\lim_\Xi(\bar p^i)$.

Let $M$ be as in Lemma~\ref{lem:kjwrjio}, for $N=m^*+i^*$.
So for any $N$ many sets there is a $u$ of size at most $M$ (above $k^*$)
which approximates the measure well.
We use the following $N$ many  sets:
\begin{itemize}
\item $B_m$ (for $m<m^*$).
\item Fix an $s\in q$ of height $h>M\cdot i^*$; and use
the $i^*$ many sets
$Z^i_s\subseteq \omega$ defined in~\eqref{eq:Z}.
\end{itemize}
Accordingly, there is a $u$ (starting above $k^*$) of size ${\le}M$
with
\begin{itemize}
\item
$\Xi(B_m)-\epsilon\le \frac{|B_m\cap u|}{|u|}\le \Xi(B_m)+\epsilon$
for each $m<m^*$, and 
\item $\frac{|Z^i_s\cap u|}{|u|}\ge 1-\frac13\loss^*-\epsilon$ for each $i<i^*$.
\end{itemize} 
So for each $i\in i^*$ there
are at least $|u|\cdot (1-\frac12\loss^*-\epsilon)$ many 
$k\in u$ with $s\in q^i_k$. 
There is a condition $r$ stronger than $q$
and all those $q^i_k$
(as ${\le}M\cdot i^*+1$ many conditions of height $h>M\cdot i^*$
with common node $s$ above their stems are compatible).
So $r$
forces, for all $i<i^*$ and $k\in u\cap Z^i_s$, that
$q^i_k\in G$ and therefore that 
$|\{\ell\in I_k:\, p^i_\ell\in G\}|\ge |I_k|(1-\frac13\loss^*)$.
By increasing $r$ to some $q'$, we can assume that $r$ decides which $p^i_\ell$ are in $G$
and that $r$ is actually stronger than each $p^i_\ell$ decided to be in $G$.
So all in all we get $q'\le q$ such that 
\[
\frac1{|u|}\sum_{k\in u}\frac{|\{\ell\in I_k:\, q'\le p^j_\ell\}|}{|I_k|}
\ \ge\  
\frac1{|u|}|\{k\in u:\, k\in Z^j_s\}|(1-\frac13\loss^*)
\ >\  
1-\loss^*-\epsilon,
\]
as required.
\end{proof}
\pagebreak[1]

%

\section{The left hand side of Cicho\'n's diagram}\label{sec:partA}

We write $\mathfrak{x}_1$ for 
$\addnull$, $\mathfrak{x}_2$ for 
$\mathfrak{b}$ (which will also be $\addmeager$),
$\mathfrak{x}_3$ for $\covnull$ and
$\mathfrak{x}_4$ for $\nonmeager$.

\subsection{Good iterations and the \texorpdfstring{$\mylin$}{LCU} property}

We want to show that some forcing $\Pa$
results in $\mathfrak{x}_i=\lambda_i$ (for $i=1\dots4$).
So we have to show two ``directions'', 
$\mathfrak{x}_i\le \lambda_i$ 
and $\mathfrak{x}_i\ge \lambda_i$. 

For $i=1,3,4$ (i.e., for all the 
characteristics on the left hand side apart from
$\mathfrak{b}=\addmeager$),
the direction $\mathfrak{x}_i\le \lambda_i$  will be given
by the fact that $\Pa$ is $(R_i,\lambda_i)$-good
for a suitable relation $R_i$. (For $i=2$, i.e., the unbounding number, we will have to work more.)

We will use the following relations:
\begin{definition}
\begin{itemize}
\item[1.] 
Let $\mathcal C$  be the set
of strictly positive rational sequences $(q_n)_{n\in\omega}$ such that
$\sum_{n\in\omega}q_n\le 1$.\footnote{It is easy to see that $\mathcal C$ is homeomorphic to $\omega^\omega$, when
we equip the rationals with the discrete topology 
and use the product topology.}
Let $\RI\subseteq \mathcal C^2$ be defined by: $f \RI g$ if
$(\forall^*n\in\omega)\, f(n)\le g(n)$.
\item[2.] $\RII\subseteq  (\omega^\omega)^2$ is defined by: $f \RII g$  if $(\forall^* n\in\omega)\, f(n)\le g(n)$.
\item[4.] $\RIV\subseteq (\omega^\omega)^2$ is defined by: $f \RIV g$  if $(\forall^* n\in\omega)\, f(n)\neq g(n)$.
\end{itemize}
\end{definition}

So far, these relations fit the usual framework of goodness, as
introduced in~\cite{MR1071305} and~\cite{MR1129144} and summarized, e.g., in~\cite[6.4]{BJ} or~\cite[Sec.\ 3]{MR3513558} or~\cite[Sec.\ 2]{MR3047455}.
For $\mathfrak{x}_3$, i.e., $\covnull$, we will use a relation $\RIII$
that does not fit this framework (as the range of the relation is not a Polish space).
Nevertheless, the property ``$(\RIII,\lambda)$-good'' behaves just as in the usual framework
(e.g., finite support limits of good forcings are good, etc.).
The relation $\RIII$ was implicitly used by Kamo and Osuga~\cite{kamo},
who investigated $(\RIII,\lambda)$-goodness.\footnote{They use the notation $(*_{c,h}^{<\lambda})$, cf.~\cite[Def.\ 6]{kamo}.} It was also used in~\cite{gaps}; a unifying notation for goodness (which works for  the
usual cases as well as relations such as $\RIII$) is given in~\cite[§4]{miguel}.

\begin{definition}\label{def:mathcalE}
We call a set $\mathcal{E}\subset\omega^\omega$ an $\RIII$-parameter, if 
for all $e\in \mathcal E$
\begin{itemize}
\item $\lim e(n)=\infty$, $e(n)\le n$, $\lim(n-e(n))=\infty$,
\item there is some $e'\in\mathcal E$ such that $(\forall^*n)\, e(n)+1\le e'(n)$, and
\item for all countable $\mathcal E'\subseteq \mathcal E$ there is some 
$e\in\mathcal E$ such that for all $e'\in \mathcal E'$
$(\forall^*n)\, e(n)\ge e'(n)$.
\end{itemize}
\end{definition}
Note that such an $\RIII$-parameter of size $\aleph_1$ exists.
This is trivial if we assume CH (which we could in this paper), but also true without this assumption, see~\cite[4.20]{miguel}.
Recall that $\rho$ and $\pi$ were defined in~\eqref{eq:blaww3}.
\begin{definition}\label{def:b}
We fix, for the rest of the paper, an $\RIII$-parameter $\mathcal E$ of size $\aleph_1$, and set
\begin{multline*}
b(h)=(h+1)^2\rho(h)^{h+1},
\quad
\mathcal S=\big\{ \psi\in\mathcal \prod_{h\in\omega}P(b(h)):\, (\forall h\in\omega)\,|\psi(h)|\le \rho(h)^{h}  \big\},
\\
{\mathcal S}_e=\big\{ \phi\in\mathcal \prod_{h\in\omega}P(b(h)):\, (\forall h\in\omega)\,|\phi(h)|\le \rho(h)^{e(h)}  \big\}
\quad\text{and}\quad
\hat{\mathcal S}=\bigcup_{e\in\mathcal E}\mathcal S_e.
\end{multline*}
We can now define the relation for $\covnull$:
\begin{itemize}
\item[3.] $\RIII\subseteq \mathcal S\times \hat{\mathcal S}$ is defined by: 
$\psi \RIII \phi$ iff $(\forall^*n\in\omega)\, \phi(n)\not\subseteq \psi(n)$. 
\end{itemize}\end{definition}

Note that $\mathcal S_e\subset \hat{\mathcal S}\subset \mathcal S$ and that
$\mathcal S_e$ and $\mathcal S$ are Polish spaces. 
Assume that $M$ is a forcing extension of $V$ by either a ccc forcing
(or by a $\sigma$-closed forcing).
Then $\mathcal E$ is
an ``$\RIII$-parameter'' in $M$ as well, and
we can evaluate in $M$ for each $e\in \mathcal E$ the sets 
$\mathcal S^M_e$ and $\mathcal S^M$, as well as 
$\hat{\mathcal S}^M=\bigcup_{e\in\mathcal E}\mathcal S_e^M$.
Absoluteness gives $\mathcal S^V_e=\mathcal S^M_e\cap V$
and $\hat{\mathcal S}^V=\hat{\mathcal S}^M\cap V$.

\begin{definition}
Fix one of these relations $R\subseteq X\times Y$.
\begin{itemize}
\item
We say ``$f$ is bounded by $g$'' if $f\Rel g$; and, for $\mathcal Y\subseteq \omega^\omega$, 
``$f$ is bounded by $\mathcal Y$'' if
$(\exists y\in \mathcal Y)\, f \Rel y$. We say ``unbounded'' for ``not bounded''. (I.e., $f$ is unbounded by $\mathcal Y$ if $(\forall y\in \mathcal Y)\,\lnot f\Rel y$.) 
\item
We call $\mathcal X$  an $\Rel$-unbounded family, if $\lnot (\exists g)\,(\forall x\in \mathcal X)x\Rel g$, and an $\Rel$-dominating family if $(\forall f)\,(\exists x\in \mathcal X)\, f\Rel x$. 

   \item
Let $\mathfrak{b}_i$ be the minimal size of an $\Rel_i$-unbounded family,
   \item and let  $\mathfrak{d}_i$ be the minimal size
      of an $\Rel_i$-dominating family.
\end{itemize}
\end{definition}

We only need the following connection between $\Ri$ and the cardinal characteristics: 
\begin{lemma}\label{lem:connection}
\begin{itemize}
\item[1.]
$\addnull=\mathfrak{b}_1$ and $\cofnull=\mathfrak{d}_1$.
\item[2.]
$\mathfrak{b}=\mathfrak{b}_2$ and $\mathfrak{d}=\mathfrak{d}_2$.
\item[3.]
$\covnull\le\mathfrak{b}_3$ and $\nonnull\ge\mathfrak{d}_3$.
\item[4.]
$\nonmeager=\mathfrak{b}_4$ and $\covmeager=\mathfrak{d}_4$.
\end{itemize}
\end{lemma}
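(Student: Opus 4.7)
The lemma is a catalogue of standard combinatorial characterizations of cardinal characteristics, so my plan is to invoke each item individually. Item~2 is definitional, as $\RII$ is eventual domination on $\omega^\omega$. Item~1 is Bartoszy\'nski's summable-sequence characterization: each $\bar q\in\mathcal{C}$ codes a null $G_\delta$ by placing basic clopen sets of total measure $q_n$ at stage $n$, and $\RI$-domination corresponds to containment of these null sets up to an additive error, whence $\addnull=\mathfrak{b}_1$ and $\cofnull=\mathfrak{d}_1$; cite \cite[Thm~2.3.9]{BJ}. Item~4 is the Bartoszy\'nski identification of $\nonmeager$ and $\covmeager$ via the eventually-different relation; cite \cite[Thm~2.4.1, 2.4.7]{BJ}.

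Item~3 is the only assertion requiring a direct argument; it is essentially Kamo--Osuga~\cite{kamo} (cf.\ also \cite[§4]{miguel}). I would identify $\prod_h b(h)$ with a standard probability space, so that $\covnull$ and $\nonnull$ are realized by the product-uniform measure, and for each $\psi\in\mathcal{S}$ set
\[
N_\psi\defeq\Big\{x\in\textstyle\prod_h b(h):\, x(h)\in\psi(h)\text{ for infinitely many }h\Big\};
\]
this is a null $G_\delta$, since $|\psi(h)|/b(h)\le 1/((h+1)^2\rho(h))$ is summable (using $\rho(h)\ge h+2$). To each $x\in\prod_h b(h)$ attach the singleton slalom $\phi_x(h)\defeq\{x(h)\}$, which lies in $\mathcal{S}_e$ for every $e\in\mathcal{E}$ and hence in $\hat{\mathcal{S}}$. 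Unwinding the definitions gives the key equivalence
\[
\psi\RIII\phi_x\ \iff\ (\forall^*n)\,x(n)\notin\psi(n)\ \iff\ x\notin N_\psi.
\]

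With this translation, both halves of item~3 are immediate. For $\covnull\le\mathfrak{b}_3$: if $\mathcal{X}\subseteq\mathcal{S}$ is $\RIII$-unbounded of size $\mathfrak{b}_3$, then for every $x$ some $\psi\in\mathcal{X}$ fails to bound $\phi_x$, i.e., $x\in N_\psi$, so $\{N_\psi:\psi\in\mathcal{X}\}$ covers the space. For $\nonnull\ge\mathfrak{d}_3$: if $X$ were non-null of size ${<}\mathfrak{d}_3$, then $\{\phi_x:x\in X\}\subseteq\hat{\mathcal{S}}$ could not be $\RIII$-dominating, so some $\psi\in\mathcal{S}$ would satisfy $X\subseteq N_\psi$, contradicting non-nullness. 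The main conceptual point is item~3's translation between the abstract slalom relation $\RIII$ and concrete null sets on $\prod_h b(h)$; once the assignment $\psi\mapsto N_\psi$ is in place, the $\mathcal{E}$-bookkeeping is automatic for singleton slaloms and everything else reduces to a Borel--Cantelli calculation.
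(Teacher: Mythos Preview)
Your proposal is correct and follows essentially the same route as the paper: items~1, 2, 4 are dispatched by citation (the paper invokes \cite[6.5.B]{BJ} and \cite[2.4.1,~2.4.7]{BJ} rather than 2.3.9, but the content is the same), and for item~3 both you and the paper work in $\prod_h b(h)$ with the uniform measure, introduce the null sets $N_\psi=\{x:(\exists^\infty h)\,x(h)\in\psi(h)\}$, associate to each point the singleton slalom $\phi_x$, and use the equivalence $\psi\RIII\phi_x\iff x\notin N_\psi$ to translate $\RIII$-unboundedness into a covering family and $\RIII$-domination into non-nullness. The only cosmetic difference is that the paper bounds $|\psi(h)|/b(h)$ by $1/(h+1)^3$ and takes the product, whereas you note summability and invoke Borel--Cantelli; these are the same computation.
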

\begin{proof}
(2) holds by definition. (1) can be found in~\cite[6.5.B]{BJ}.
(4) is a result of~\cite{MR671224,MR917147},  
cf.~\cite[2.4.1~and~2.4.7]{BJ}.

To see (3), we work in the space $\Omega=\prod_{h\in\omega} b(h)$,
with the $b$ defined in Definition~\ref{def:b} and 
the usual (uniform) measure.
It is well known that we get the same
values for the characteristics $\covnull$ 
and $\nonnull$ 
whether we define them using $\Omega$ or, as usual, $2^\omega$ (or $[0,1]$ for that matter, etc). 
Given $\psi\in\mathcal S$, note that 
\[ N_\psi=\{\eta\in\Omega:\,  (\exists^\infty h)\, \eta(h)\in \psi(h)\}\]
is a Null set, as 
$\{\eta\in\Omega:\, (\forall h>k)\, \eta(h)\notin\psi(h)\}$
has measure $\prod_{h>k}(1-\frac{|\psi(h)|}{b(h)})\ge \prod_{h>k}(1-\frac1{(h+1)^3})$, which converges to $1$ for $k\to\infty$.

Let $\mathcal A\subseteq \mathcal S$ be an $\RIII$-unbounded family.
So for every $\phi\in\hat{\mathcal S}$ there is some $\psi\in A$
such that $(\exists^\infty h)\, \psi(h)\supseteq \phi(h)$.
In particular, for each $\eta\in\Omega$, there is 
a $\psi\in A$ with $\eta\in N_\psi$; i.e., $\covnull\le |\mathcal A|$. 

Analogously, 
let $X$ be a non-null set (in $\Omega$). For each $\psi$
there is an $x\in X\setminus N_\psi$, 
so $\phi_x(n)=\{x(n)\}$ satisfies $\psi \RIII \phi_x$.
\end{proof}

\begin{remark}
As shown implicitly in~\cite{kamo}, and explicitly in~\cite[4.22]{miguel}, we actually 
get $\covnull\le c^\exists_{b,\rho^{\mathrm{Id}}}\le \mathfrak b_3$.
\end{remark}

\begin{definition}
Let $P$ be a ccc forcing, $\lambda$ an uncountable regular cardinal, and $\Rel_i\subseteq X\times Y$ one of the relations above
(so for $i=1,2,4$, $Y=X$, and for $i=3$ $Y=\hat{\mathcal S}_e$).
$P$ is $(\Rel_i,\lambda)$-good, if
for each $P$-name $r$ for an element of $Y$ there is (in $V$) a nonempty
set $\mathcal Y\subseteq Y$ of size ${<}\lambda$
such that every $f\in X$ (in $V$) that is $\Rel_i$-unbounded by $\mathcal Y$ is forced to be $\Rel_i$-unbounded by $r$ as well.
\end{definition}

Note that $\lambda$-good trivially implies $\mu$-good if $\mu\ge\lambda$
are regular.

\begin{lemma}\label{lem:gettinggood}
Let $\lambda$ be uncountable regular.
\begin{itemize}
\item[a.] Forcings of size ${<}\lambda$ are $(\Rel_i,\lambda)$-good. In particular, Cohen forcing is 
$(\Rel_i,\aleph_1)$-good.
\item[b.] A FS ccc iteration of $(\Rel_i,\lambda)$-good forcings (and in particular, a composition of two such forcings) is $(\Rel_i,\lambda)$-good.
\item[1.] A sub-Boolean-algebra of the random algebra is $(\RI,\aleph_1)$-good. Any 
$\sigma$-centered forcing notion is $(\RI,\aleph_1)$-good.
\item[3.] A $(\rho,\pi)$-linked forcing is $(\RIII,\aleph_1)$-good
(for the $\rho,\pi$ of Definition~\ref{def:QII}).
\end{itemize}
\end{lemma}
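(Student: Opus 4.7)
The plan is to treat the four clauses by different devices but in the same overall shape: for each name $r$ construct a ``small'' witnessing family $\mathcal Y\subseteq Y$ such that every ground-model $f$ unbounded by $\mathcal Y$ is forced unbounded by $r$.

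For (a), I would index $\mathcal Y$ by pairs $(p,N)\in P\times\omega$, giving $|\mathcal Y|\leq|P|\cdot\aleph_0<\lambda$. For each such pair, ccc provides, for every $n\geq N$, a countable maximal antichain below $p$ deciding $r(n)$; from the values that occur I read off $y_{p,N}\in Y$, with the specific choice tailored to $\Rel_i$ (pointwise minimum of the possible values of $r(n)$ for $\RII$, an arbitrary value for $\RIV$, a rational sup for $\RI$, and the union of decided values truncated to the $\mathcal S_e$-envelope for $\RIII$). For (b), I would do the standard induction on the length $\delta$ of the iteration: the successor step simply composes the two witnessing families; the limit step uses ccc to reflect any name in $V^{P_\delta}$ into a name at a bounded earlier stage when $\cf(\delta)<\lambda$, and regularity of $\lambda$ to absorb the union when $\cf(\delta)\geq\lambda$.

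For (1), the $(\RI,\aleph_1)$-goodness of random and of $\sigma$-centered forcings are classical results of Bartoszy\'nski, see~\cite[Section~6.5]{BJ}. For the random algebra, one constructs a single $y\in\mathcal C$ from the essential suprema of $\dot g(n)$ (suitably truncated to preserve rationality and summability), with $\sum y(n)\leq 1$ following from the forced bound $\sum\dot g(n)\leq 1$ via monotone convergence; any sub-Boolean-algebra of random inherits the witness since its names are random names. For $\sigma$-centered $P=\bigcup_nC_n$, one constructs one witness $y_n\in\mathcal C$ per centre from the extremal values of $\dot g(k)$ achievable by conditions in $C_n$, with centredness guaranteeing $\sum y_n(k)\leq 1$: any finite near-optimal sample in $C_n$ has a common lower bound in $P$, which then forces $\sum\dot g(k)\leq 1$.

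Part (3) is the hardest and I would follow Kamo--Osuga~\cite{kamo} (re-framed in~\cite{miguel}). Given a $(\rho,\pi)$-linked $Q$ with witnesses $Q^i_j$ and a name $\dot\phi$ for an element of $\hat{\mathcal S}$, I would first use ccc together with the closure of $\mathcal E$ under eventual domination (Def.~\ref{def:mathcalE}) to pin down a single $e\in\mathcal E$ with $\Vdash\dot\phi\in\mathcal S_e$. Then I would construct a countable $\mathcal Y\subseteq\hat{\mathcal S}$ indexed by pairs $(h,j)$, where $\phi_{h,j}(n)$ records those $x\in b(n)$ that some $p\in Q^h_j$ forces into $\dot\phi(n)$. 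The crucial cardinality bound $|\phi_{h,j}(n)|\leq\rho(n)^{e(n)}$ comes from $\pi(h)$-linkedness: if $\rho(n)^{e(n)}+1$ distinct $x$'s occurred, the corresponding conditions in $Q^h_j$ would be pairwise compatible (as $\pi(h)\gg\rho(n)^{e(n)}$) and their common extension would force $|\dot\phi(n)|>\rho(n)^{e(n)}$, contradicting $\dot\phi\in\mathcal S_e$. Hence $\phi_{h,j}\in\mathcal S_{e'}$ for a suitable $e'\in\mathcal E$, so $\mathcal Y\subseteq\hat{\mathcal S}$; the goodness itself then follows by a pigeonhole at the level of the conditions deciding $\dot\phi(n)\not\subseteq\psi(n)$. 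The main obstacle is precisely that $\hat{\mathcal S}$ is not Polish, so the usual Polish-framework preservation theorems do not apply off-the-shelf; instead one must lean on the absoluteness remark following Def.~\ref{def:b} and the ground-model $\mathcal E$-parameter of size $\aleph_1$ to substitute combinatorial control via $\rho$ and $\pi$ for the missing Polishness.
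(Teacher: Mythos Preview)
The paper's own proof consists entirely of citations, so your sketches for (a), (b), and (1) already go beyond what the paper provides and are essentially in the right spirit. (Your case split in (b) is garbled, though: it is when $\cf(\delta)>\omega$ that a $P_\delta$-name for a real reflects to an earlier stage, and the cofinality-$\omega$ case is the one needing extra work, not the other way around.)

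There is, however, a genuine gap in your argument for (3). You define $\phi_{h,j}(n)$ using conditions in $Q^h_j$ with $h$ \emph{fixed}, and then claim the bound $|\phi_{h,j}(n)|\le\rho(n)^{e(n)}$ because ``$\pi(h)\gg\rho(n)^{e(n)}$''. But for fixed $h$ this inequality fails for all large $n$, since $\rho(n)^{e(n)}\to\infty$ while $\pi(h)$ is a fixed natural number. So your bound does not hold and your $\phi_{h,j}$ need not lie in $\hat{\mathcal S}$ at all. Your goodness direction has the same defect: given $q\in Q^h_j$ forcing $\dot\phi(n)\not\subseteq\psi(n)$, the extension of $q$ that decides a specific $x\in\dot\phi(n)\setminus\psi(n)$ need not remain in $Q^h_j$, so you cannot place $x$ in $\phi_{h,j}(n)$.

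The Kamo--Osuga argument does not decouple the linkedness level $h$ from the coordinate $n$; rather it works at level $h=n$. The key numerical inequality is the one the paper singles out, $\pi(n)\ge b(n)^{\rho(n)^n}$, which says that $\pi(n)$ dominates the number of possible values of $\dot\phi(n)$ (subsets of $b(n)$ of size at most $\rho(n)^{e(n)}\le\rho(n)^n$). From this one gets, for each $n$ and each $j<\rho(n)$, a single set $v_{n,j}\subseteq b(n)$ with $|v_{n,j}|\le\rho(n)^{e(n)}$ such that every $p\in Q^n_j$ is compatible with ``$\dot\phi(n)=v_{n,j}$'' (otherwise the $\le\pi(n)$ many witnesses to incompatibility would have a common extension forcing $\dot\phi(n)$ to equal no possible value). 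One then sets $\phi^*(n)=\bigcup_{j<\rho(n)}v_{n,j}$, of size $\le\rho(n)^{e(n)+1}$, hence in $\mathcal S_{e'}$ for some $e'\in\mathcal E$. Goodness follows because any $q$ forcing $\dot\phi(n)\not\subseteq\psi(n)$ lies in $Q^n_{j_q(n)}$ for large $n$, and $v_{n,j_q(n)}\not\subseteq\psi(n)$ since some extension of $q$ forces $\dot\phi(n)=v_{n,j_q(n)}$. Your indexing by an independent pair $(h,j)$ misses exactly this coupling of the linkedness parameter to the coordinate.
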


\begin{proof}
\textbf{(a\&b):}
For $i=1,2,4$ this is proven in~\cite{MR1071305}, cf.~\cite[6.4]{BJ}. 
The same proof works for $i=3$, as shown in~\cite[Lem.~12,13]{kamo}.
The proof for the uniform framework can be found in~\cite[4.10,4.14]{miguel}.

\textbf{(1)}
follows from~\cite{MR1071305} and~\cite{MR1022984}, cf.~\cite[6.5.17--18]{BJ}.

\textbf{(3)} is shown in~\cite[Lem.~10]{kamo}, cf.~\cite[Lem.~4.24]{miguel}; as our choice
of $\pi$, $\rho$ and $b$ (see Definition~\ref{def:b}) satisfies
$\pi(h)\ge b(h)^{\rho(h)^h}=((h+1)^2\rho(h)^{h+1})^{\rho(h)^h}$.
\end{proof}


Each relation $\Rel_i$ is a subset of some $X\times Y$, 
where $X$ is either $2^\omega$, $\omega^\omega$ (or homeomorphic to it) or $\mathcal S$,
and $Y$ is the range of $\Rel_i$.
\begin{lemma}
For each $i$ and each $g\in Y$, the set
$\{f\in X:\, f\Rel_i g\}\subseteq X$ is meager.
\end{lemma}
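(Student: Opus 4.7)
The plan is to prove each of the four sets is meager by the standard ``$F_\sigma$ with nowhere dense pieces'' argument: write
\[\{f\in X:\, f\Rel_i g\}=\bigcup_{N\in\omega}C_N^{(i)},\]
where $C_N^{(i)}$ is obtained by replacing the ``$\forall^*n$'' quantifier by ``$\forall n\ge N$''. In each of the four cases $C_N^{(i)}$ is closed, because the defining constraint at each single coordinate $n$ cuts out a clopen subset of $X$ (depending only on the value at coordinate $n$), and $C_N^{(i)}$ is the intersection over $n\ge N$ of these. So it suffices to show that each $C_N^{(i)}$ has empty interior in $X$, i.e., that no basic open cylinder (determined by finitely many coordinates $<k$) is contained in it; equivalently, given such a cylinder, I would exhibit an element of the cylinder that violates the defining condition at some $n\ge\max(k,N)$.

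First I would treat $\RII$ and $\RIV$ on $\omega^\omega$, which are entirely routine: given a cylinder determined by $\psi\restriction k$, pick any $n\ge\max(k,N)$ and either set $f(n)>g(n)$ (for $\RII$) or $f(n)=g(n)$ (for $\RIV$). Next I would do $\RI$ on $\mathcal C$: a basic open set is determined by fixing rational values $f(0),\dots,f(k-1)$ with sum $<1$; I pick $n\ge\max(k,N)$, assign $f(n)$ a rational value $>g(n)$ but so small that $f(n)$ plus the fixed head sum stays below $1$, and distribute further tiny positive rationals on the remaining indices so that $\sum f<1$. (Here I use that $\mathcal C$ is homeomorphic to $\omega^\omega$ via the discrete topology on $\mathbb Q^+$.) This produces a point in the cylinder outside $C_N^{(1)}$.

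The slightly less routine case, though still easy, is $\RIII$ on $\mathcal S$. A basic open cylinder in $\mathcal S$ is determined by fixing $\psi(0),\dots,\psi(k-1)$ (each $\psi(h)\subseteq b(h)$ with $|\psi(h)|\le\rho(h)^h$). The given $\phi\in\hat{\mathcal S}$ lies in $\mathcal S_e$ for some $e\in\mathcal E$, so $|\phi(h)|\le\rho(h)^{e(h)}$ for all $h$. Since $\lim(h-e(h))=\infty$, there is some $n\ge\max(k,N)$ with $e(n)<n$, hence $\rho(n)^{e(n)}\le\rho(n)^{n-1}<\rho(n)^n$. At this coordinate I would simply set $\psi(n)=\phi(n)$, which is a legal value of $\psi$ and witnesses $\phi(n)\subseteq\psi(n)$; any completion of $\psi$ on the remaining coordinates lies in the cylinder but outside $C_N^{(3)}$.

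I do not expect any real obstacle; the only point requiring a moment's thought is $\RIII$, where one must verify that the ``shape'' constraint on $\psi\in\mathcal S$ leaves room to absorb a set of size at most $\rho(n)^{e(n)}$, and this is exactly why Definition~\ref{def:mathcalE} demands $\lim(n-e(n))=\infty$. Everything else reduces to the standard one-coordinate modification argument.
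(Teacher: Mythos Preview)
Your proof is correct and follows the same strategy as the paper: write $\{f:\, f\Rel_i g\}=\bigcup_N C_N$ via the $\forall^* n$ structure and show each $C_N$ is closed nowhere dense. In fact your version is more careful than the paper's one-line argument, which asserts that each single-coordinate slice $\{f:\, R^n_i(f,g)\}$ is closed nowhere dense --- that is not literally true (for $\RII$ and $\RIV$ these slices are proper clopen sets), and it is precisely your observation that the \emph{intersections} $C_N$ are nowhere dense that makes the argument go through.
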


\begin{proof}
We have explicitly defined each $f \Rel_i g$ as $\forall^*n R^n_i(f,g)$
for some $R^n_i$.
The lemma follows easily from the fact that 
for each $n\in\omega$,
the set $\{f\in X:\, R^n_i(f,g)\}$ is closed nowhere dense.
\end{proof}

\begin{lemma}\label{lem:coboundedunbounded}
Let $\lambda\le\kappa\le\mu$ be uncountable regular cardinals.
Force with $\mu$ many Cohen reals $(c_\alpha)_{\alpha\in \mu}$, followed by 
an $(\Rel_i,\lambda)$-good forcing.
Note that each Cohen real $c_\beta$ can be interpreted 
as element of the Polish space $X$ where $\Rel_i\subseteq X\times Y$. 
Then we get: For every real $r$ in the final extension's $Y$, the set 
$\{\alpha\in \kappa:\, c_\alpha\text{ is $\Rel_i$-unbounded by }r\}$ is cobounded in $\kappa$. I.e., 
$(\exists \alpha\in\kappa)\, (\forall \beta\in \kappa\setminus \alpha)\, \lnot c_\alpha\Rel_i r$.
\end{lemma}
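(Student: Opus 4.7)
The plan is to combine the $(\Rel_i,\lambda)$-goodness of the second forcing with the Cohen-genericity of the added reals, via a standard support-bookkeeping argument.

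First, I would work in the Cohen extension $V[G]$ producing $(c_\alpha)_{\alpha<\mu}$ and apply $(\Rel_i,\lambda)$-goodness of the second forcing $P$ (working in $V[G]$) to the $P$-name $r$. This yields a nonempty $\mathcal Y\subseteq Y$ in $V[G]$ with $|\mathcal Y|<\lambda$ such that every $f\in X\cap V[G]$ that is $\Rel_i$-unbounded by $\mathcal Y$ is $P$-forced to be $\Rel_i$-unbounded by $r$.

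Next I would localize the Cohen support of $\mathcal Y$. By ccc of Cohen forcing each $y\in\mathcal Y$ has a nice name using only countably many coordinates, so collecting them gives $A\subseteq\mu$ with $|A|<\lambda$ (here we use that $\lambda$ is uncountable regular, so $\aleph_0\cdot|\mathcal Y|<\lambda$) such that every element of $\mathcal Y$ lies in $V[G\restriction A]$. Since $|A\cap\kappa|<\lambda\le\kappa$ and $\kappa$ is regular, pick $\alpha_0<\kappa$ with $A\cap\kappa\subseteq\alpha_0$.

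Finally, I would verify the conclusion for every $\beta\in\kappa$ with $\beta\ge\alpha_0$. Such a $\beta$ is not in $A$, so by standard product mutual genericity of the Cohen reals, $c_\beta$ is Cohen-generic over $V[G\restriction A]$. For each $y\in\mathcal Y\subseteq V[G\restriction A]$, the set $\{f\in X:\, f\Rel_i y\}$ is meager with Borel code in $V[G\restriction A]$ (by the preceding lemma), so Cohen-genericity of $c_\beta$ over that model gives $\lnot c_\beta\Rel_i y$. Since this holds for every $y\in\mathcal Y$, the real $c_\beta$ is $\Rel_i$-unbounded by $\mathcal Y$, and goodness concludes that $P\Vdash\lnot c_\beta\Rel_i r$.

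The only point requiring care is the bookkeeping of the Cohen support of $\mathcal Y$; meagerness is applied pointwise in $y$, so no ``one Cohen real avoids many meager sets simultaneously'' argument is needed, and the proof works uniformly for $\lambda=\aleph_1$ and $\lambda>\aleph_1$.
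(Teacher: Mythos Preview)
Your argument is correct and follows essentially the same strategy as the paper: produce a small witness set $\mathcal Y$ via goodness, localize it to a small set of Cohen coordinates, and use Cohen genericity plus the meagerness lemma to conclude. The organization differs slightly: the paper works in the intermediate model $V_\kappa$ after the first $\kappa$ Cohens and applies goodness to the \emph{composite} remaining forcing (the last $\mu\setminus\kappa$ Cohens followed by the good forcing), invoking that Cohen forcing is $(\Rel_i,\aleph_1)$-good and that goodness is preserved under composition; it then finds an initial segment $V_\alpha$ containing $\mathcal Y$. You instead pass to the full Cohen extension $V[G]$, apply goodness of $P$ alone, and localize $\mathcal Y$ to an arbitrary support set $A\subseteq\mu$ rather than an initial segment. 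Your route avoids the appeal to preservation of goodness under composition, at the cost of the (standard, harmless) product mutual-genericity fact; either packaging works.
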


\begin{proof}
Work in the intermediate extension after $\kappa$ many Cohen reals, let us call it $V_\kappa$.
The remaining forcing (i.e., $\mu\setminus\kappa$ many Cohens composed with the good forcing) is good; so applying the definition we get 
(in $V_\kappa$)
a set $\mathcal{Y}\subseteq Y$ of size ${<}\lambda$. 

As the initial Cohen extension is ccc, and $\kappa\ge \lambda$ is regular,
we get some $\alpha\in\kappa$ such that each element $y$ of $\mathcal{Y}$
already exists in the extension by the first $\alpha$ many Cohens, call it
$V_{\alpha}$.

Fix some $\beta\in\kappa\setminus \alpha$ and $y\in Y$. 
As  $\{x\in X:\ x\Rel_i y\}$ is a meager set already defined in $V_\alpha$, 
we get $\lnot c_\beta \Rel_i y$.
Accordingly, $c_\beta$ is unbounded by $\mathcal Y$; and, by the definition of good, 
unbounded by $r$ as well.
\end{proof}

In the light of this result, let us revisit Lemma~\ref{lem:connection} 
with some new notation, the ``linearly cofinally unbounded'' property $\mylin$:
\begin{definition}\label{def:linear}
For $i=1,2,3,4$, $\gamma$ a limit ordinal,  and $P$ a ccc forcing notion, let $\mylini(P,\gamma)$  stand for:
\begin{quote}
There is a sequence 
$(x_\alpha)_{\alpha\in\gamma}$ of $P$-names such that 
for every $P$-name $y$\\
$(\exists \alpha\in\gamma)\, (\forall \beta\in \gamma\setminus \alpha)\,P\forces \lnot x_\beta \Ri y)$.
\end{quote}
\end{definition}

\begin{lemma}\label{lem:linearcharacteristics}
\begin{itemize}
\item 
$\mylini(P,\delta)$
is equivalent 
to $\mylini(P,\cf(\delta))$.
\item
If  $\lambda$ is regular, then
$\mylini(P,\lambda)$ 
implies $\mathfrak{b}_i\le\lambda$ and $\mathfrak{d}_i\ge\lambda$. 
\end{itemize}
In particular: 
\begin{itemize}
\item[1.]
$\mylinI(P,\lambda)$ implies  $P\forces(\,\addnull\le\lambda\,\&\,\cofnull\ge\lambda\,)$.
\item[2.] 
$\mylinII(P,\lambda)$ implies  $P\forces(\,\mathfrak{b}\le\lambda\,\&\,\mathfrak{d}\ge\lambda\,)$.
\item[3.] 
$\mylinIII(P,\lambda)$ implies  $P\forces(\,\covnull\le\lambda\,\&\,\nonnull\ge\lambda\,)$.
\item[4.] 
$\mylinIV(P,\lambda)$ implies  $P\forces(\,\nonmeager\le\lambda\,\&\,\covmeager\ge\lambda\,)$.

\end{itemize}
\end{lemma}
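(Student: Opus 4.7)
My plan is to prove the two bullet points; items~1--4 then drop out by substituting the bounds $\mathfrak{b}_i\le\lambda\le\mathfrak{d}_i$ into Lemma~\ref{lem:connection}. The whole argument is a direct unpacking of the definitions, so the task is largely one of economy rather than ingenuity.

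For the cofinality equivalence, the direction $\mylini(P,\delta)\Rightarrow\mylini(P,\cf(\delta))$ is handled by thinning: fix a strictly increasing cofinal sequence $(\alpha_\xi)_{\xi<\cf(\delta)}$ in $\delta$ and set $x'_\xi=x_{\alpha_\xi}$; a tail witness $\alpha$ for some $y$ in the original sequence yields the tail witness $\xi_0$ for the thinned sequence via any $\alpha_{\xi_0}\ge\alpha$. The converse pads the short witness $(y_\xi)_{\xi<\cf(\delta)}$ by setting $x_\alpha=y_{\xi(\alpha)}$ with $\xi(\alpha)=\min\{\xi:\alpha\le\alpha_\xi\}$; the tail ordinal $\alpha_{\xi_0}+1$ then witnesses the full condition, since $\xi(\alpha)>\xi_0$ for all larger $\alpha$.

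For the regular case, I work in the $P$-extension (noting that ccc preserves regularity of $\lambda$). The inequality $\mathfrak{b}_i\le\lambda$ is immediate: the family $\{x_\alpha:\alpha<\lambda\}$ is $\Ri$-unbounded because, given any $y$ in the range of $\Ri$, the hypothesis supplies an $\alpha$ for which $x_\alpha$ itself fails to be $\Ri$-bounded by $y$. The inequality $\mathfrak{d}_i\ge\lambda$ is a contradiction argument exploiting regularity: a dominating family $\mathcal Y$ of size $\mu<\lambda$ would yield, for each $y\in\mathcal Y$, a tail ordinal $\alpha_y<\lambda$; the supremum $\alpha^*=\sup_{y\in\mathcal Y}\alpha_y$ stays below $\lambda$, and any $x_\beta$ with $\beta\ge\alpha^*$ is then $\Ri$-unbounded by every element of $\mathcal Y$, contradicting the dominating property.

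The only point that needs care, rather than a genuine obstacle, is that for $i=3$ the relation $\RIII$ lives on $\mathcal S\times\hat{\mathcal S}$ rather than on a single Polish space; however, the arguments above are agnostic to this, since each $x_\alpha$ need only be treated as a name for an element of the domain of $\Ri$ and each $y$ as a name for an element of the range.
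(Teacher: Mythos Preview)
Your proposal is correct and follows essentially the same approach as the paper's proof: thin or pad along a cofinal sequence for the first bullet, and for the second use that the $x_\alpha$'s form an unbounded family while regularity lets you collect the tail ordinals for any small candidate dominating family into a single bound. The only cosmetic difference is that you phrase the $\mathfrak d_i$ argument inside the extension (hence your remark that ccc preserves regularity of $\lambda$), whereas the paper stays in $V$ and works directly with a set of $\theta<\lambda$ many $P$-names; both are standard and equivalent.
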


\begin{proof} Assume that $(\alpha_\beta)_{\beta\in\cf(\delta)}$ is increasing continuous and cofinal
in $\delta$.
If $(x_\alpha)_{\alpha\in \delta}$
witnesses $\mylini(P,\delta)$,  then $(x_{\alpha_\beta})_{\beta\in\cf(\delta)}$ witnesses 
$\mylini(P,\cf(\delta))$. And 
if $(x_\beta)_{\beta\in\cf(\delta)}$ witnesses 
$\mylini(P,\cf(\delta))$, then  $(y_\alpha)_{\alpha\in\delta}$
witnesses $\mylini(P,\cf(\delta))$, where 
$y_{\alpha}\coloneq x_\beta$ for $\alpha\in [\alpha_\beta,\alpha_{\beta+1})$.

The set $\{x_\alpha:\, \alpha\in\lambda\}$ is certainly forced to be $\Rel_i$-unbounded;
and given a set $Y=\{y_j:\, j<\theta\}$ of $\theta<\lambda$ many $P$-names,
each has a bound $\alpha_j\in\lambda$ so that 
$(\forall \beta\in \lambda\setminus \alpha_j)\,P\forces \lnot x_\beta \Ri y_j)$, so for any $\beta\in\lambda$ above all $\alpha_j$ we get
$P\forces \lnot x_\beta \Ri y_j$ for all $j$; i.e., $Y$ cannot be dominating.
\end{proof}

\subsection{The initial forcing \texorpdfstring{$\Pa$}{P5} and the \texorpdfstring{$\mypart$}{COB} property}\label{ss:initalforcing}

We will assume the following throughout the paper:
\begin{assumption}\label{asm:P}
\begin{itemize}
    \item 
$\lambda_1<\lambda_2<\lambda_3<\lambda_4<\lambda_5$ are regular uncountable cardinals
such that $\mu<\lambda_i$
implies $\mu^{\aleph_0}<\lambda_i$. 
\item 
We set $\delta_5=\lambda_5+\lambda_5$, and 
partition $\delta_5\setminus \lambda_5$ into 
unbounded sets $S^i$
for $i=1,\dots,4$.
Fix for each $\alpha\in \delta_5\setminus \lambda_5$ a $w_\alpha\subseteq \alpha$ such that 
$\{w_\alpha:\, \alpha\in S^i\}$ is cofinal\footnote{i.e.,
if $\alpha\in S^i$ then $|w_\alpha|<\lambda_i$,
and for all $u\subseteq \delta_5$, $|u|<\lambda_i$
there is some $\alpha\in S^i$ with $w_\alpha\supseteq u$.} 
in $[\delta_5]^{{<}\lambda_i}$ (for each $i=1,\dots,4$).
\end{itemize}
\end{assumption}

The reader can assume that $(\lambda_i)_{i=1,\dots,5}$ and
$(S^i)_{i=1,\dots,4}$  
have been fixed once and 
for all (let us call them ``fixed parameters''), 
whereas we will investigate various possibilities for
$\bar w=(w_\alpha)_{\alpha\in \delta_5\setminus \lambda_5}$  in the following.
(We will call a $\bar w$ which satisfies the assumption a ``cofinal parameter''.)

We define by induction:
\begin{definition}\label{def:Pa}
We define the FS iteration $(P_\alpha, Q_\alpha)_{\alpha\in \delta_5}$ 
and, for $\alpha>\lambda_5$,
$P'_\alpha$  as follows: If $\alpha\in \lambda_5$, then $Q_\alpha$ is Cohen forcing. 
    In particular, the generic at $\alpha$ is determined by the Cohen real $\eta_\alpha$. For $\alpha\in \delta_5\setminus \lambda_5$: 
\begin{enumerate}
    \item
       $
            Q^\mathrm{full}_\alpha\defeq
            \left\{
              \begin{array}{c}
                \text{Amoeba}\\
                \text{Hechler}\\
                \text{Random}\\
                \QII\\
              \end{array}
            \right\}\text{ for $\alpha$ in}\left\{
              \begin{array}{l}
                S^1\\
                S^2\\
                S^3\\
                S^4\\  
              \end{array}
            \right.
        $.
    \\
    So $Q^\mathrm{full}_\alpha$
    is a Borel definable subset of the 
    reals, and the $Q^\mathrm{full}_\alpha$-generic 
    is determined, in a Borel way, by
    the canonical generic real $\eta_\alpha$.
    \item\label{item:asm} $P'_\alpha$ is the set of conditions $p\in P_\alpha$
    satisfying the following, for each 
    $\beta\in\supp(p)$:
    $\beta\in w_\alpha$ and there
    is (in the ground model) 
    a countable $u\subseteq w_\alpha\cap \beta$ and a Borel function
    $B:(\omega^\omega)^{u}\to Q^\mathrm{full}_\beta$ such that
    $p\restriction\beta$ forces
    that $p(\beta)=B((\eta_\gamma)_{\gamma\in u})$.   
    We \emph{assume} that 
    \begin{equation}\label{eq:cpl}
    P'_\alpha\text{ is a complete subforcing of }P_\alpha. 
    \end{equation}
    \item\label{item:asm44} In the $P_\alpha$-extension, let $M_\alpha$ be the induced $P'_\alpha$-extension of $V$.
        Then $Q_\alpha$ is the $M_\alpha$-evaluation of $Q^\mathrm{full}_\alpha$. Or equivalently (by absoluteness): 
        $Q_\alpha= Q^\mathrm{full}_\alpha\cap M_\alpha$.
        We call $Q_\alpha$ a ``partial $Q^\mathrm{full}_\alpha$ forcing'' (e.g.:
        a ``partial random forcing'').
        \end{enumerate}
\end{definition}
Some notes:
\begin{itemize}
    \item 
    For item~(\ref{item:asm44}) to make sense,~\eqref{eq:cpl} is required.
    \item We do not require any ``transitivity'' of the $w_\alpha$, i.e., 
    $\beta\in w_\alpha$ does generally not imply $w_\beta\subseteq w_\alpha$.
    \item We do not
require (and it will generally not be true)
that $P_\alpha$ forces that $Q_\alpha$
is a \emph{complete} subforcing of $Q^\mathrm{full}_\alpha$.
\end{itemize}

A simple absoluteness argument (between $M_\alpha$
and $V[G_\alpha]$) shows:
\begin{lemma}\label{lem:iterationbasic}
$P_\alpha$ forces:
\begin{enumerate}
    \item[(a)]  $Q_\alpha$ is an incompatibility preserving subforcing of $Q^\mathrm{full}_\alpha$ and in particular ccc.
    (And so, $P_\alpha$ itself is ccc for all $\alpha$.)
    \item[(b)] For $\alpha\in S^i$, $|Q_\alpha|<\lambda_i$.
    \item[(c)] $Q_\alpha$ forces that its generic filter $G(\alpha)$ is also generic 
    over $M_\alpha$. So from the point
    of view of $M_\alpha$,
    $M_\alpha[G(\alpha)]$ is a $Q^\mathrm{full}_\alpha$-extention.
    \item[(2)] For $\alpha\in S^2$:
    The partial Hechler forcing $Q_\alpha$ is $\sigma$-centered.
    \item[(3)] For $\alpha\in S^3$: The partial random forcing
    $Q_\alpha$ equivalent to a subalgebra of the random algebra.
    \item[(4)] For $\alpha\in S^4$: 
    A partial $\QII$ forcing is $(\rho,\pi)$-linked 
    and basically equivalent to a subalgebra of the random algebra (as in Lemma~\ref{lem:QIIbasic}(\ref{item:subrand})).
\end{enumerate}
\end{lemma}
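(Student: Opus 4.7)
The proof is a uniform absoluteness argument, carried out by induction on $\alpha$: at each stage, every property claimed for $Q_\alpha$ is inherited from the corresponding Borel-definable property of $Q^{\mathrm{full}}_\alpha$, using that the latter is defined from a ground-model Borel code and is therefore absolute between $M_\alpha$ and $V[G_\alpha]$. In particular, $(Q^{\mathrm{full}}_\alpha)^{M_\alpha} = Q^{\mathrm{full}}_\alpha \cap M_\alpha = Q_\alpha$ with the same order and the same incompatibility relation in both models, and this single identification is the engine of every item.

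For (a), absoluteness instantly yields $Q_\alpha$ as an incompatibility-preserving suborder of $Q^{\mathrm{full}}_\alpha$, so every $Q_\alpha$-antichain is a $Q^{\mathrm{full}}_\alpha$-antichain and ccc transfers; the standard FS-iteration induction (successor $P_{\alpha+1} \cong P_\alpha \ast Q_\alpha$, limits of ccc iterations remain ccc) then delivers ccc of each $P_\alpha$. For (b), I would bound $|P'_\alpha| \le |w_\alpha|^{\aleph_0} \cdot 2^{\aleph_0} < \lambda_i$ using the hypothesis $\mu^{\aleph_0} < \lambda_i$ for $\mu < \lambda_i$; since $P'_\alpha$ is ccc, the continuum of $M_\alpha$ has size at most $|P'_\alpha|^{\aleph_0} < \lambda_i$ in $V$, giving $|Q_\alpha| < \lambda_i$ in $V[G_\alpha]$ since $P_\alpha$ is ccc and preserves $\lambda_i$. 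For (c), the point is that any $D \in M_\alpha$ dense in $(Q^{\mathrm{full}}_\alpha)^{M_\alpha}$ is, again by absoluteness of the order on $Q_\alpha$, also dense in $Q_\alpha$ as computed in $V[G_\alpha]$; hence the $Q_\alpha$-generic $G(\alpha)$ meets every such $D$ and so is $Q^{\mathrm{full}}_\alpha$-generic over $M_\alpha$, and Borel definability then identifies $M_\alpha[G(\alpha)]$ as an honest $Q^{\mathrm{full}}_\alpha$-extension.

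For parts (2)--(4) the strategy is to transfer the witnessing decompositions unchanged. The Hechler centered pieces (conditions with a fixed stem) and the $\pi(h)$-linked pieces $Q^h_i$ used in the proof of Lemma~\ref{lem:QIIbasic}(\ref{item:pr}) are defined by absolute formulas, so their traces on $M_\alpha$ give the required $\sigma$-centered and $(\rho,\pi)$-linked decompositions of $Q_\alpha$ in $V[G_\alpha]$. For the random-subalgebra statements in (3) and the second half of (4), the proof of Lemma~\ref{lem:QIIbasic}(\ref{item:subrand}) applies essentially verbatim: the sub-Boolean algebra of $\mathrm{Borel}/\mathrm{Null}$ generated in $V[G_\alpha]$ by $\{[\lim(q)]_{\mathcal N} : q \in Q_\alpha\}$ is a subalgebra of the random algebra, and $Q_\alpha$ is dense in it by the same ``pick a point of positive measure avoiding the negated conditions, then extend the stem'' argument. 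The genuinely delicate step is (c), because one has to track carefully which density is computed in which model; the remaining items reduce to routine absoluteness bookkeeping once the ground-model Borel code for $Q^{\mathrm{full}}_\alpha$ has been fixed.
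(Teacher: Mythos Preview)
Your proposal is correct and follows essentially the same absoluteness strategy as the paper: items (a), (b), (2), (3), (4) are handled exactly as the paper does, via the bound $|P'_\alpha|\le |w_\alpha|^{\aleph_0}\cdot 2^{\aleph_0}<\lambda_i$ for (b) and by transferring the witnessing decompositions (centered pieces, the $Q^h_i$, the sub-Boolean-algebra $B'$) from $M_\alpha$ to $V[G_\alpha]$ for (2)--(4).

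The one place you diverge is (c), which you single out as ``the genuinely delicate step''. The paper dispatches it in a single sentence: since $Q_\alpha\in M_\alpha$ and $M_\alpha\subseteq V[G_\alpha]$ is a transitive class, every dense subset of $Q_\alpha$ lying in $M_\alpha$ is already a dense subset of $Q_\alpha$ in $V[G_\alpha]$, so any filter generic over the larger model is automatically generic over the smaller one. Your argument via absoluteness of density is not wrong, but it is more than is needed; the point is simply that genericity over a supermodel trivially implies genericity over a submodel when the poset itself lies in the submodel. So (c) is in fact the \emph{least} delicate item, not the most.
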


\begin{proof}
(b): $|P_\alpha'|\le |w_\alpha|^{\aleph_0}\times 2^{\aleph_0}<\lambda_i$ by Assumption~\ref{asm:P}.
There is a set of nice $P_\alpha'$-names
of size $<\lambda_i$ such that 
every 
$P_\alpha'$-name for a real
has an equivalent name in this set.
Accordingly, the size of the reals in $M_\alpha$
is forced to be $<\lambda_i$.

(c) is trivial, as $Q_\alpha$ 
is element of the transitive class $M_\alpha$.

(4): By Lemma~\ref{lem:QIIbasic}(\ref{item:pr}) we know that $M_\alpha$ thinks that $\QII$
is $(\rho, \pi)$-linked; i.e., that there 
is a family\footnote{Actually there is even a Borel definable family $Q^i_j$, see the proof of  Lemma~\ref{lem:QIIbasic}(\ref{item:linked}), but this is not required here.}  $Q^i_j$ as in  Definition~\ref{def:pirholinked}.
Being $\ell$-linked is obviously absolute between
$M_\alpha$ and $V[G_\alpha]$ (for any $\ell<\omega$);
and $M_\alpha\vDash \bigcup_{h\in\omega,i<\rho(h)}Q^h_i=Q^\mathrm{full}_\alpha$
translates to $V[G_\alpha]\vDash \bigcup_{h\in\omega,i<\rho(h)}Q^h_i=Q_\alpha$.

Similarly, $M_\alpha$ thinks that $\QII$ 
satisfies~\ref{lem:QIIbasic}(\ref{item:subrand}),
i.e., that there is some dense $Q'\subseteq \QII$ and a dense embedding from $Q'$ to
a subalgebra $B'$ of the random algebra.

So from the point of view of
$V[G_\alpha]$, there is a $Q'$ dense 
in $\QII\cap M_\alpha$ and
a dense embedding of $Q'$ into some $B'$,
which is a subalgebra of
the random algebra in $M_\alpha$
and therefore of the random algebra
in $V[G_\alpha]$.
\end{proof}

It is easy to see that~\eqref{eq:cpl} is a ``closure property'' of $w_\alpha$:
\begin{lemma}\label{lem:closure1}
Assume we have constructed (in the ground model) 
$(P_\beta,Q_\beta)_{\beta<\alpha}$ and $w_\alpha$
according to Definition~\ref{def:Pa};
for some
$\alpha\in S^i$,  $i=1,\dots,4$.
This determines the (limit or composition) $P_\alpha$.
\begin{enumerate}[(a)]
    \item\label{item:borel} For every $P_\alpha$-name $\tau$ of a real,
    there is (in $V$) a countable $u\subseteq \alpha$
    and a Borel function $B:(\omega^\omega)^u\to \omega^\omega$
such that $P_\alpha$ forces $\tau=B((\eta_\gamma)_{\gamma\in u})$.

(So if $w_\alpha\supseteq u$ satisfies~\eqref{eq:cpl},
then $P_\alpha$ forces that $\tau\in M_\alpha$.)
    \item\label{item:closure1}
    The set of $w_\alpha$ satisfying~\eqref{eq:cpl} is an $\omega_1$-club in $[\alpha]^{<\lambda_i}$ (in the ground model).
\end{enumerate}
\end{lemma}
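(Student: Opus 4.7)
For part (a), I would proceed by induction on $\alpha\le\delta_5$. The successor step $\alpha=\beta+1$ uses that the $Q_\beta$-generic filter is Borel-determined by the canonical real $\eta_\beta$ (Definition~\ref{def:Pa}(1)): a $P_{\beta+1}$-name $\tau$ for a real can be rewritten as a Borel function of $\eta_\beta$ together with $P_\beta$-name parameters, and the inductive hypothesis supplies a countable $u'\subseteq\beta$, yielding $u=u'\cup\{\beta\}$. At limit $\alpha$, one uses that $P_\alpha$ is ccc (by induction), so $\tau$ is determined by countably many countable antichains of finite-support conditions; the union of the supports is a countable set $\subseteq\alpha$, to which the inductive hypothesis applies.

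For the $\omega_1$-closure part of (b), take an increasing $\omega_1$-sequence $(w^{(\xi)})_{\xi<\omega_1}$ each satisfying~\eqref{eq:cpl}, with union $w$. A maximal antichain $A\subseteq P'_\alpha$ computed from $w$ is countable by ccc, and its elements collectively use only countably many coordinates of $w$ (finite supports together with countable Borel parameter sets). By regularity of $\omega_1$, those coordinates are confined to some $w^{(\xi)}$, so $A$ sits inside the smaller $P'_\alpha$ computed from $w^{(\xi)}$; $A$ remains maximal there (any condition in the smaller algebra also lies in the larger one), and by hypothesis is maximal in $P_\alpha$.

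For cofinality, given $w_0\in[\alpha]^{<\lambda_i}$, I would take an elementary submodel $N\prec H(\chi)$ for sufficiently large $\chi$, of size $\mu=|w_0|^{\aleph_0}<\lambda_i$ (available by Assumption~\ref{asm:P}), closed under countable sequences, and containing $w_0$, the iteration $(P_\beta,Q_\beta)_{\beta\le\alpha}$, the sequence $(w_\beta)_{\beta<\alpha}$, and the relevant Borel coding. Setting $w=N\cap\alpha$ gives $w\supseteq w_0$ of size $<\lambda_i$. To verify~\eqref{eq:cpl} I would show that for each $p\in P_\alpha$ there is a reduction $p^*\in P'_\alpha$, constructed coordinate-by-coordinate on $\supp(p)\cap w$ by replacing the Borel function defining $p(\beta)$ with a marginalised one depending only on $(\eta_\gamma)_{\gamma\in u\cap w}$; any $P'_\alpha$-strengthening of $p^*$ is then compatible with $p$ in $P_\alpha$.

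The main obstacle is this projection step. For $p(\beta)=B((\eta_\gamma)_{\gamma\in u})$ with $u\subseteq\beta$ countable, one must produce a Borel $B^*$ depending only on $(\eta_\gamma)_{\gamma\in u\cap w}$, with values in $Q^\mathrm{full}_\beta$, yielding a condition weak enough to be a reduction of $p(\beta)$. For random (and partial-random $Q_\beta$ for $\beta\in S^3$) this is the marginal (conditional-expectation) operation; for $\beta\in S^4$, Lemma~\ref{lem:QIIbasic}(\ref{item:subrand}) lets one pass to random and project there; for $\beta\in S^1\cup S^2$ analogous ``forgetting'' operations handle Amoeba and Hechler. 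Verifying that the resulting $p^*(\beta)$ is forced to lie in $Q_\beta$ (not merely $Q^\mathrm{full}_\beta$) and that the reduction property propagates through the iteration requires both the Borel-definable structure imposed in Definition~\ref{def:Pa} and the absoluteness afforded by Lemma~\ref{lem:iterationbasic}.
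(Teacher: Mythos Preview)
Your treatment of (a) and of the $\omega_1$-closure half of (b) is fine and matches the paper. The divergence is in the unboundedness argument for (b).

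The paper does not build reductions. Instead it runs a direct $\omega_1$-length closure: given $w$, for every countable $x\subseteq P^w_\alpha$ pick (if they exist) a common lower bound $p\in P_\alpha$ and a condition $q\in P_\alpha$ incompatible with every element of $x$, and enlarge $w$ to $w'$ by the countable index sets needed to place $p,q$ into $P^{w'}_\alpha$ (this uses part (a)). After $\omega_1$ iterations the limit $w^{\omega_1}$ works: any maximal antichain $A\subseteq P^{w^{\omega_1}}_\alpha$ is countable, hence caught at some stage $\xi$, and the incompatibility witness added at stage $\xi+1$ already lies in $P^{w^{\omega_1}}_\alpha$, so $A$ was maximal in $P_\alpha$; the lower-bound witnesses similarly ensure that $P^{w^{\omega_1}}_\alpha$-compatibility agrees with $P_\alpha$-compatibility. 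This is uniform in the iterands and uses nothing about Amoeba, Hechler, random, or $\QII$ beyond ccc and the Borel coding from (a).

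Your marginalisation route, by contrast, commits you to forcing-specific projection operators and to showing that they propagate through the iteration. For random the single-step projection is plausible (conditional expectation), but for Amoeba and Hechler there is no evident analogue, and for $\QII$ the detour through Lemma~\ref{lem:QIIbasic}(\ref{item:subrand}) gives no map back from a projected random condition into $Q_\beta\subseteq\QII$. You yourself flag this as ``the main obstacle'' and leave it unresolved; that is a genuine gap. The paper's approach sidesteps the issue entirely by closing under witnesses rather than constructing them.
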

(A set $A\subseteq [\alpha]^{{<}\lambda_i}$ is an $\omega_1$-club, if for each
$a\in [\alpha]^{{<}\lambda_i}$ there is a $b\supseteq a$ in $A$, and if $(a^i)_{i\in\omega_1}$ is an increasing sequence of sets in $A$, then the limit $b\coloneq \bigcup_{i\in\omega_1}a^i$ is in $A$ as well.)

\begin{proof}
The first item follows easily from the fact that 
we are dealing with a FS ccc iteration
where the generics of all iterands $Q_\beta$ are Borel-determined by some generic real $\eta_\beta$.
(See, e.g., \cite[1.2]{eight}, for more details.)

Any $w\in [\alpha]^{<\lambda_i}$
defines some $P^w_\alpha$.
We first define $w'$ for such a $w$:

Set $X=[P^w_\alpha]^{\le\aleph_0}$, 
as set
of size at most 
$(2^{\aleph_0}\times |w|^{\aleph_0})^{\aleph_0}<\lambda_i$.
For $x\in X$, pick some
$p\in P_\alpha$ 
stronger than all conditions in $x$
(if such a condition exists),
and some $q\in P_\alpha$ 
incompatible to each element of $x$
(again, if possible).
There is a countable $w_x\subseteq\alpha$
such that $p,q\in P^{w_x}$.
Set $w'\defeq w\cup \bigcup_{x\in X} w_x$.

Start with any $w_0\in [\alpha]^{<\lambda_i}$.
Construct an increasing continuous chain in $[\alpha]^{<\lambda_i}$ with $w^{k+1}=(w^k)'$.
Then $w^{\omega_1}\supseteq w_0$ 
is in the set of $w$ satisfying~\eqref{eq:cpl}; 
which shows that this set is 
unbounded.
It is equally easy to see that it is 
closed under increasing sequences of length $\omega_1$.
\end{proof}

For later reference, we explicitly state the assumption we used (for every
$\alpha\in\delta_5\setminus \lambda_5$):
\begin{assumption}\label{asm:completesubforcing}
$w_\alpha$ is sufficiently closed so that \eqref{eq:cpl} is satisfied.
\end{assumption}

Let us also restate Lemma~\ref{lem:closure1}(a):
\begin{lemma}\label{lem:jkwhrwe924}
For each $\Pa$-name
$f$ of a real, there is a countable
set $u\subseteq \delta_5$ such that
$w_\alpha\supseteq u$ implies that ($\Pa$ forces that) $f\in M_\alpha$.
\end{lemma}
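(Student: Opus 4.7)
The plan is to combine the Borel-coding principle of Lemma~\ref{lem:closure1}(\ref{item:borel}), applied at the top of the iteration $\Pa$, with the observation that the partial forcing $P'_\alpha$ interprets each generic real $\eta_\gamma$ for $\gamma\in w_\alpha$ as an element of $M_\alpha$.

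First, since $\Pa=P_{\delta_5}$ is a finite support ccc iteration whose iterands each produce a generic that is Borel-determined by a single real $\eta_\beta$, the very same argument as in Lemma~\ref{lem:closure1}(\ref{item:borel}), now applied at stage $\delta_5$ rather than at some $\alpha\in S^i$, yields for the given $\Pa$-name $f$ a countable set $u\subseteq\delta_5$ in $V$ and a ground-model Borel function $B:(\omega^\omega)^u\to\omega^\omega$ with $\Pa\forces f=B((\eta_\gamma)_{\gamma\in u})$. I take this $u$ as the witness.

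Now fix any $\alpha\in\delta_5\setminus\lambda_5$ with $w_\alpha\supseteq u$. Since $w_\alpha\subseteq\alpha$ by Assumption~\ref{asm:P}, automatically $u\subseteq w_\alpha\cap\alpha$, so every $\gamma\in u$ is a coordinate appearing in $P'_\alpha$. I then show by induction on $\gamma\in w_\alpha$ that $\eta_\gamma\in M_\alpha$: if $\gamma\in\lambda_5$ then $\eta_\gamma$ is simply the Cohen real read directly off the $P'_\alpha$-generic at coordinate $\gamma$ (Cohen conditions being ground-model, hence trivially Borel-definable from earlier generics with empty parameter set); if $\gamma\in\delta_5\setminus\lambda_5$ then by Definition~\ref{def:Pa}(\ref{item:asm}) every value of a $P'_\alpha$-condition at coordinate $\gamma$ is a Borel function of $(\eta_\delta)_{\delta\in w_\alpha\cap\gamma}$, and all such $\eta_\delta$ lie in $M_\alpha$ by the induction hypothesis, whence $\eta_\gamma\in M_\alpha$ too. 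Assumption~\ref{asm:completesubforcing} is what ensures that the $P'_\alpha$-generic filter is well-defined inside the $\Pa$-extension, so the above reading-off operation is legitimate.

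Combining both parts via Borel absoluteness between $V$ and $M_\alpha$, the value $f=B((\eta_\gamma)_{\gamma\in u})$ is computed inside $M_\alpha$, and therefore $\Pa\forces f\in M_\alpha$. The one nontrivial step is the induction establishing $\eta_\gamma\in M_\alpha$ for $\gamma\in w_\alpha$, but this is precisely what the Borel-definability clause in Definition~\ref{def:Pa}(\ref{item:asm}) was engineered to deliver, so there is no real obstacle beyond tracking the bookkeeping.
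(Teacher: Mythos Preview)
Your proof is correct and follows the paper's own approach: the paper presents this lemma explicitly as a restatement of Lemma~\ref{lem:closure1}(\ref{item:borel}) (including its parenthetical remark), and you simply unfold that parenthetical by spelling out the induction on $\gamma\in w_\alpha$ showing $\eta_\gamma\in M_\alpha$. The only difference is level of detail---the paper leaves the parenthetical as an exercise, while you sketch the inductive verification that $P'_\alpha$ has access to each $\eta_\gamma$.
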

%

\begin{lemma}\label{lem:linearPapartial}
$\mylini(\Pa,\kappa)$ holds for $i=1,3,4$ and each regular cardinal $\kappa$ in $[\lambda_i,\lambda_5]$.
\end{lemma}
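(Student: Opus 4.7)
The plan is to use the first $\kappa$ Cohen reals of the iteration as witnesses: set $x_\alpha\defeq \eta_\alpha$ for $\alpha<\kappa$, viewed via the canonical identification as an element of the domain $X$ of $\Rel_i\subseteq X\times Y$ (the space $\mathcal C$, $\omega^\omega$, or $\mathcal S$, respectively). Factor $\Pa=C_\kappa*\Pa^{\mathrm{tail}}$, where $C_\kappa$ is the finite-support Cohen product corresponding to the iterands at stages $\alpha<\kappa$, and $\Pa^{\mathrm{tail}}$ is the remaining FS ccc iteration consisting of the Cohen stages in $[\kappa,\lambda_5)$ together with the partial forcings at the stages in $\delta_5\setminus\lambda_5$. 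The conclusion will follow from Lemma~\ref{lem:coboundedunbounded} (applied with $\mu=\kappa$, $\lambda=\lambda_i$), provided that $\Pa^{\mathrm{tail}}$ is $(\Rel_i,\lambda_i)$-good in the $C_\kappa$-extension.

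By the iteration preservation of goodness (Lemma~\ref{lem:gettinggood}(b)), it suffices to check that each iterand of $\Pa^{\mathrm{tail}}$ is $(\Rel_i,\lambda_i)$-good, and this is a simple case analysis using Lemmas~\ref{lem:iterationbasic} and~\ref{lem:gettinggood}. Cohen iterands are $(\Rel_i,\aleph_1)$-good by Lemma~\ref{lem:gettinggood}(a). For $\alpha\in S^1$, the partial Amoeba $Q_\alpha$ has size ${<}\lambda_1\le\lambda_i$, giving goodness via Lemma~\ref{lem:gettinggood}(a). For $\alpha\in S^2$, $Q_\alpha$ is $\sigma$-centered by Lemma~\ref{lem:iterationbasic}(2); the case $i=1$ is Lemma~\ref{lem:gettinggood}(1), and $i=3,4$ follow from $|Q_\alpha|<\lambda_2\le\lambda_i$. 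For $\alpha\in S^3$, $Q_\alpha$ is a subalgebra of the random algebra by Lemma~\ref{lem:iterationbasic}(3), so Lemma~\ref{lem:gettinggood}(1) handles $i=1$ and $|Q_\alpha|<\lambda_3\le\lambda_i$ handles $i=3,4$. For $\alpha\in S^4$, $Q_\alpha$ is by Lemma~\ref{lem:iterationbasic}(4) both $(\rho,\pi)$-linked and (essentially) a subalgebra of random: $i=1$ via Lemma~\ref{lem:gettinggood}(1), $i=3$ via Lemma~\ref{lem:gettinggood}(3), and $i=4$ via $|Q_\alpha|<\lambda_4$.

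Having established that $\Pa^{\mathrm{tail}}$ is $(\Rel_i,\lambda_i)$-good, hence $(\Rel_i,\kappa)$-good since $\kappa\ge\lambda_i$, a direct application of Lemma~\ref{lem:coboundedunbounded} yields that for every $\Pa$-name $y$ of an element in the range of $\Rel_i$, the set $\{\alpha<\kappa:\,\eta_\alpha\text{ is }\Rel_i\text{-unbounded by }y\}$ is cobounded in $\kappa$, which is exactly $\mylini(\Pa,\kappa)$. The only substantive work is the case analysis above, and I expect no real obstacle: all the combinatorial content has already been absorbed into the $(\Rel_i,\lambda_i)$-goodness of the individual building blocks, and the key transfer from Cohens-in-unboundedness to the $\mylin$-property is Lemma~\ref{lem:coboundedunbounded}. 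The one mild point to keep in mind is that the assertion for $i=2$ is deliberately omitted here, because $\mathfrak b$ is not controlled by such goodness and will be handled separately (via the FAM-limit machinery).
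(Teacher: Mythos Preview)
Your proposal is correct and follows essentially the same approach as the paper: a case analysis using Lemmas~\ref{lem:iterationbasic} and~\ref{lem:gettinggood} to verify that each iterand is $(\Rel_i,\lambda_i)$-good, followed by an application of Lemma~\ref{lem:coboundedunbounded} to the initial Cohen reals. The only cosmetic difference is that you factor at $\kappa$ (putting the remaining Cohens into the tail) whereas the paper implicitly uses all $\lambda_5$ Cohens up front; both are fine.
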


\begin{proof}
This follows from Lemma~\ref{lem:iterationbasic}:

For $i=1$: 
Partial random and partial $\QII$ forcings are
basically equivalent to a sub-Boolean-algebra of
the random algebra;
and partial Hechler forcings are $\sigma$-centered.
The partial amoeba forcings are small, i.e., have size ${<}\lambda_1$.
So according to 
Lemma~\ref{lem:gettinggood}, all iterands $Q_\alpha$ (and therefore the limits as well)
are $(\RI,\lambda_1)$-good.

For $i=3$, note that partial $\QII$ forcings are $(\rho,\pi)$-linked.
All other iterands have size ${<}\lambda_3$,
so the forcing is $(\RIII,\lambda_3)$-good.

For $i=4$ it is enough to note that \emph{all} iterands are small, i.e., of size
${<}\lambda_4$.

We can now apply Lemma~\ref{lem:coboundedunbounded}.
\end{proof}

So in particular, $\Pa$ forces $\addnull\le\lambda_1$, $\covnull\le\lambda_3$, $\nonmeager\le\lambda_4$ and $\covmeager=\nonnull=\cofnull=\lambda_5=2^{\aleph_0}$;
i.e., the respective left hand characteristics are small. 
We now show that they are also large,  using the ``cone of bounds'' property $\mypart$:

%

\begin{definition}\label{def:partial}
For a ccc forcing notion $P$, regular uncountable cardinals $\lambda,\mu$
and 
$i=1,2,4$, let $\myparti(P,\lambda,\mu)$ stand for:
\begin{quote}
There is a ${<}\lambda$-directed partial order $(S,\prec)$
of size $\mu$
and a sequence $(g_s)_{s\in S}$ of $P$-names for reals
such that for each $P$-name $f$ of a real
\\
$(\exists s\in S)\,(\forall t\succ s)\, P\forces f \Ri g_t $.
\end{quote}
For $i=3$, let $\mypartIII(P,\lambda,\mu)$ stand for:
\begin{quote}
There is a ${<}\lambda$-directed partial order $(S,\prec)$
of size $\mu$
and a sequence $(g_s)_{s\in S}$ of $P$-names for reals
such that for each $P$-name $f$ of a null-set 
\\
$(\exists s\in S)\,(\forall t\succ s)\, P\forces g_t\notin f$.
\end{quote}
\end{definition}
So $s$ is the tip of a cone that consists of elements bounding $f$, where in case $i=3$ we implicitly use an additional relation
$N \RIII' r $ expressing that the null-set $N$ doesn't contain the real $r$.
Note that $\covnull$ is the bounding number $\mathfrak b'_3$ of $\RIII'$, and $\nonnull$ the dominating number $\mathfrak d'_3$.
So $\addnull=\mathfrak b'_3\le\mathfrak b_3$ and $\nonnull=\mathfrak d'_3\ge \mathfrak d_3$
(as defined 
in Lemma~\ref{lem:connection}).

$\mypart_i(P,\lambda,\mu)$ implies that $P$ forces that $\mathfrak b_i\ge \lambda$
 and that $\mathfrak d_i\le \mu$ (for $i=1,2,4$, and the same for 
 $i=3$ and $\mathfrak b'_3$, $\mathfrak d'_3$):
Clearly $P$ forces that $\{g_s:\, s\in\mathcal S\}$ is dominating.
And if $A$ is set of names of size $\kappa<\lambda$, then 
for each $f\in A$ the definition gives a bound $s(f)$ and directedness some 
$t\succ s(f)$ for all $f$, i.e., $g_t$ bounds all elements of $A$.
So we get:
\begin{lemma}\label{lem:partialcharacteristics}

\begin{itemize}
\item[1.] $\mypartI(P,\lambda,\mu)$ implies $P\forces(\, \addnull\ge\lambda \,\&\, \cofnull\le \mu\,)$.
\item[2.] $\mypartII(P,\lambda,\mu)$ implies $P\forces(\, \mathfrak{b}\ge\lambda \,\&\, \mathfrak{d}\le \mu\,)$.
\item[3.] $\mypartIII(P,\lambda,\mu)$ implies $P\forces(\, \covnull\ge\lambda \,\&\, \nonnull\le \mu\,)$.
\item[4.] $\mypartIV(P,\lambda,\mu)$ implies $P\forces(\, \nonmeager\ge\lambda \,\&\, \covmeager\le \mu\,)$.
\end{itemize}
\end{lemma}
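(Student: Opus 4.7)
The plan is to derive all four consequences from one abstract statement: under $\mypart_i(P,\lambda,\mu)$, $P$ forces $\mathfrak b_i \ge \lambda$ and $\mathfrak d_i \le \mu$ (and analogously $\mathfrak b'_3 \ge \lambda$, $\mathfrak d'_3 \le \mu$ for $i=3$, where $\RIII'$ is the null-set/real relation $N \RIII' r \iff r\notin N$). Then I would invoke Lemma~\ref{lem:connection} to translate the $\mathfrak b_i, \mathfrak d_i$ bounds into the stated inequalities on the classical invariants. This is exactly the reading signalled by the paragraph immediately preceding the lemma, so the proof is essentially an unpacking of definitions.

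For the upper bound on $\mathfrak d_i$, I would argue that the $\mu$-sized family $\{g_t : t \in S\}$ is $\Ri$-dominating: given any $P$-name $f$ for a real (resp.\ for a null set, when $i=3$), the definition of $\mypart_i$ yields some $s \in S$ with $P \forces f \Ri g_t$ for every $t \succ s$; such a $t$ exists since $(S,\prec)$ is ${<}\lambda$-directed (applied to $\{s\}$) and has no maximal element, given $|S| = \mu$. Hence $g_t$ witnesses domination of $f$.

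For the lower bound on $\mathfrak b_i$, let $\kappa < \lambda$ and let $A = \{f_\alpha : \alpha < \kappa\}$ be $P$-names. For each $\alpha$, pick $s_\alpha \in S$ from the definition of $\mypart_i$. Since $(S,\prec)$ is ${<}\lambda$-directed and $\kappa < \lambda$, the set $\{s_\alpha : \alpha < \kappa\}$ admits a common upper bound $t \in S$ with $t \succ s_\alpha$ for all $\alpha$. Then $P \forces f_\alpha \Ri g_t$ for every $\alpha$, so $A$ is bounded and not $\Ri$-unbounded.

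To close, I apply Lemma~\ref{lem:connection}: parts (1), (2), (4) of the lemma follow immediately since that lemma equates $\addnull, \mathfrak b, \nonmeager$ with $\mathfrak b_1, \mathfrak b_2, \mathfrak b_4$ (and dually for the dominating numbers). For (3), the cone argument above, applied to the relation $\RIII'$ built into Definition~\ref{def:partial} for $i=3$, produces $\covnull = \mathfrak b'_3 \ge \lambda$ and $\nonnull = \mathfrak d'_3 \le \mu$ directly. There is no real obstacle here — the only minor subtlety is being careful that, in the $i=3$ clause, the relation one works with is $\RIII'$ (null set vs.\ real), not $\RIII$ (slaloms vs.\ slaloms), so that the bounding/dominating numbers match $\covnull$ and $\nonnull$ exactly; this is precisely how $\mypartIII$ is set up in Definition~\ref{def:partial}.
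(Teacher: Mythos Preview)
Your proposal is correct and follows essentially the same approach as the paper: the paper's argument (given in the paragraph immediately preceding the lemma) is exactly to show $\{g_s:s\in S\}$ is dominating, that any family of size ${<}\lambda$ is bounded via ${<}\lambda$-directedness, and then to invoke Lemma~\ref{lem:connection} (with the $\RIII'$ variant for $i=3$). Your write-up simply spells out these steps in slightly more detail.
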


\begin{lemma}\label{lem:partialPa}
$\myparti(\Pa,\lambda_i,\lambda_5)$ holds
(for $i=1,2,3,4$).
\end{lemma}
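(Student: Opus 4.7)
The plan is to let $(S,\prec)$ be $S^i$ (more precisely, the cofinal set of those $\alpha\in S^i$ for which $w_\alpha$ satisfies Assumption~\ref{asm:completesubforcing}; this is still cofinal by Lemma~\ref{lem:closure1}(\ref{item:closure1})) ordered by $\alpha\prec\beta$ iff $\{\alpha\}\cup w_\alpha\subseteq w_\beta$, and to take $g_\alpha$ to be the canonical name for the $Q_\alpha$-generic real, viewed as an element of $\mathcal C$ for $i=1$, of $\omega^\omega$ for $i=2,4$, and of $2^\omega$ for $i=3$.

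\emph{Directedness.} Given $\{\alpha_j:j<\kappa\}\subseteq S$ with $\kappa<\lambda_i$, the set $u\defeq\bigcup_{j<\kappa}(\{\alpha_j\}\cup w_{\alpha_j})$ has cardinality strictly less than $\lambda_i$, by the regularity of $\lambda_i$ together with the closure clause $\mu<\lambda_i\Rightarrow\mu^{\aleph_0}<\lambda_i$ of Assumption~\ref{asm:P}. The cofinality hypothesis on $\{w_\alpha:\alpha\in S^i\}$ then yields some $\beta\in S^i$ with $w_\beta\supseteq u$, and hence $\alpha_j\prec\beta$ for every $j$.

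\emph{Cone of bounds.} Let $f$ be a $\Pa$-name for a real (or for a null set, when $i=3$). By Lemma~\ref{lem:jkwhrwe924} there is a countable $u\subseteq\delta_5$ such that $w_\alpha\supseteq u$ forces $f\in M_\alpha$. Pick any $s\in S$ with $w_s\supseteq u$. For $t\succ s$, $w_t\supseteq w_s\supseteq u$, so $\Pa\forces f\in M_t$. By Lemma~\ref{lem:iterationbasic}(c), $g_t$ is $Q^\mathrm{full}_t$-generic over $M_t$, and the standard properties of the four iterands, applied with $M_t$ in place of the ground model and $f$ as the target, supply the required bound: amoeba adds a $g_t\in\mathcal C$ that $\RI$-dominates every element of $\mathcal C\cap M_t$; Hechler adds a real $\RII$-dominating $\omega^\omega\cap M_t$; random adds a real avoiding every Borel null set coded in $M_t$; and $\QII$, via Lemma~\ref{lem:QIIbasic}(\ref{item:ed}), adds a real $\RIV$-eventually different from every element of $\omega^\omega\cap M_t$.

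The main point that needs checking is the preservation of the bounding relation from $M_t[g_t]$ into the full $\Pa$-extension, but each $\Ri$ is a Borel ($\Pi^0_2$ or $\Sigma^0_2$) statement in $(f,g_t)$, so the bound is absolute and we conclude $\Pa\forces f\,\Ri\,g_t$ for every $t\succ s$, establishing the cone. The one genuine subtlety is making sure that a $\QII$-generic over $M_t$ is still eventually different from \emph{every} real of $M_t$ (not only those in $\lim(T^*)\cap M_t$), but this is immediate since any $f\in\omega^\omega\cap M_t$ that agrees infinitely often with $g_t\in\lim(T^*)$ would already force infinitely many coordinates of $g_t$ to lie in the bounded set $T^*\cap\omega^{h}$, contradicting the definition of $\QII$-genericity.
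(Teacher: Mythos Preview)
Your proof is correct and follows the same approach as the paper: take $S=S^i$ ordered by inclusion of the $w_\alpha$, let $g_\alpha=\eta_\alpha$, and use the cofinality of $\{w_\alpha:\alpha\in S^i\}$ in $[\delta_5]^{<\lambda_i}$ for both the ${<}\lambda_i$-directedness and for locating the tip of the cone via Lemma~\ref{lem:jkwhrwe924}. Two minor remarks: your restriction to those $\alpha$ satisfying Assumption~\ref{asm:completesubforcing} is vacuous (this is a standing assumption on \emph{all} $w_\alpha$), and your final sentence about $\QII$ does not parse as written---but no extra argument is needed there, since Lemma~\ref{lem:QIIbasic}(\ref{item:ed}) already asserts the eventually-different property for all of $\omega^\omega$, not just $\lim(T^*)$.
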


\begin{proof}
We use the following facts (provable in ZFC, or true in the $P_\alpha$-extention, respectively):
\begin{itemize}
\item[1.]
Amoeba forcing adds a sequence $\bar b$ which $\RI$-dominates the old elements of $\mathcal{C}$.

(The simple proof can be found 
in~\cite[Lem.~1.4]{ten}, a slight variation in~\cite{BJ}.)

Accordingly (by absoluteness), 
the generic real $\eta_\alpha$ 
for partial amoeba forcing $Q_\alpha$ $\RI$-dominates $\mathcal{C}\cap M_\alpha$.

\item[2.]
Hechler forcing adds a real which $\RII$-dominates all old reals.

Accordingly, the 
generic real $\eta_\alpha$ for partial Hechler forcing $Q_\alpha$
$\RII$-dominates all reals in $M_\alpha$.

\item[3.] Random forcing adds a random real.

Accordingly, the generic real $\eta_\alpha$ for partial random forcing 
$Q_\alpha$ is not in any nullset
whose Borel-code is in $M_\alpha$.
\item[4.]
The generic branch $\eta\in\lim(T^*)$ added by
$\QII$ is eventually different to each old real, i.e., 
$\RIV$-dominates the old reals.

(This was shown in Lemma~\ref{lem:QIIbasic}(\ref{item:ed}).)

Accordingly, the generic branch $\eta_\alpha$ for partial $\QII$ forcing $Q_\alpha$
$\RIV$-dominates the reals in $M_\alpha$.
\end{itemize}
Fix $i\in\{1,2,3,4\}$, and set $S=S^i$ and 
$s\prec t$ if $w_s\subsetneq w_t$, and  
let $g_s$ be $\eta_s$, i.e., the generic added at $s$ (e.g., the partial random real in case of $i=3$, etc).

Fix a $\Pa$-name $f$ for a real.
It depends (in a Borel way) on 
a countable index set $w^*\subseteq \delta_5$.
Fix some $s\in S^i$ such that $w_s\supseteq w^*$.
Pick any $t\succ s$. Then $w_t\supseteq w_s\supseteq w^*$, so ($\Pa$ forces that)
$f\in M_t$, so, as just argued,
$\Pa\forces f \Ri g_t $
(or: $\Pa\forces f \RIII' g_t $ for $i=3$). 
\end{proof}

So to summarize what we know so far about $\Pa$:
Whenever we choose (in addition to the ``fixed'' $\lambda_i$, $S^i$)
a cofinal parameter $\bar w$ satisfying Assumptions~\ref{asm:P} and~\ref{asm:completesubforcing}, we get
\begin{fact}\label{fact:summary}
\begin{itemize}
\item $\myparti$ holds for $i=1,2,3,4$. So the left hand side characteristics are large.
\item $\mylini$ holds for $i=1,3,4$. So the left hand side characteristics other than $\mathfrak{b}$ are small.
\end{itemize}
\end{fact}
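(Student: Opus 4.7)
The plan is to assemble Fact~\ref{fact:summary} directly from the two preceding results, Lemma~\ref{lem:linearPapartial} and Lemma~\ref{lem:partialPa}, which between them already record the two bullet points in essentially the required form under Assumptions~\ref{asm:P} and~\ref{asm:completesubforcing}.

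First I would unpack the $\myparti$ claim. Lemma~\ref{lem:partialPa} states exactly that $\myparti(\Pa,\lambda_i,\lambda_5)$ holds for each $i\in\{1,2,3,4\}$: the directed index set is $S^i$ ordered by $s\prec t\iff w_s\subsetneq w_t$, the bounds $g_s$ are the generics $\eta_s$ added by the respective iterand $Q_s$, and directedness of size ${<}\lambda_i$ is supplied by the cofinality assumption in~\ref{asm:P}. In particular, together with Lemma~\ref{lem:partialcharacteristics}, this forces $\addnull\ge\lambda_1$, $\mathfrak b\ge\lambda_2$, $\covnull\ge\lambda_3$, and $\nonmeager\ge\lambda_4$, so all four left-hand characteristics are ``large''.

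For the $\mylini$ claim, I would simply invoke Lemma~\ref{lem:linearPapartial} at $\kappa=\lambda_i$ for each $i\in\{1,3,4\}$; its proof applies Lemma~\ref{lem:coboundedunbounded} to the good portion of the iteration after pointing out that every iterand is either small (partial amoeba, partial Hechler on the relevant side) or sufficiently good (partial random and partial $\QII$ are subalgebras of random, hence $(\RI,\aleph_1)$-good; partial $\QII$ is $(\rho,\pi)$-linked, hence $(\RIII,\aleph_1)$-good; and everything is of size ${<}\lambda_4$). Via Lemma~\ref{lem:linearcharacteristics}, this then gives $\addnull\le\lambda_1$, $\covnull\le\lambda_3$, and $\nonmeager\le\lambda_4$, so these three characteristics are ``small''.

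Since the proof is pure bookkeeping on top of lemmas already established in this section, there is no real obstacle here; the conspicuous omission is $i=2$ in the $\mylini$ clause, and this is exactly what the author flags as the main difficulty in the paper --- we have no $(\RII,\lambda_2)$-good replacement for an eventually-different style forcing, so bounding $\mathfrak b$ from above by $\lambda_2$ is deferred to the heavier FAM-limit / interval-FAM-limit machinery built around $\QII$ in the subsequent sections. So the only thing I would emphasize in the write-up is this asymmetry, since it motivates the remainder of the paper.
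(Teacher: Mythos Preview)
Your proposal is correct and matches the paper's approach: Fact~\ref{fact:summary} is explicitly presented in the paper as a summary of Lemma~\ref{lem:linearPapartial} and Lemma~\ref{lem:partialPa}, with no separate proof given, and your write-up simply unpacks this. One tiny correction of phrasing: the FAM-limit machinery that handles the missing case $i=2$ is developed in the remaining \emph{subsections} of Section~\ref{sec:partA} (not in later sections).
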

What is missing is ``$\mathfrak{b}$ small''. We do not claim that this
will be forced for every $\bar w$ as above; but we will show in the 
rest of Section~\ref{sec:partA}
that we can choose such a $\bar w$.

\subsection{FAMs in the \texorpdfstring{$P_\alpha$}{P alpha}-extension compatible with \texorpdfstring{$M_\alpha$}{M alpha}, explicit conditions.}
We first investigate sequences 
$\bar q=(q_\ell)_{\ell\in\omega}$ of
$Q_\alpha$-conditions that are in $M_\alpha$,
i.e., the (evaluations of) $P'_\alpha$-names
for $\omega$-sequences in 
$Q^\mathrm{full}_\alpha$.
For $\alpha\in S^3\cup S^4$,
$M_\alpha$ thinks that $Q_\alpha$ (i.e., $Q^\mathrm{full}_\alpha$)
has FAM-limits.
So if $M_\alpha$ thinks that $\Xi_0$ is a FAM,
then for any sequence $\bar q$ in $M_\alpha$ there is a
condition $\lim_{\Xi_0}(\bar q)$ in $M_\alpha$ (and thus in $Q_\alpha$).
We can relativize 
Lemma~\ref{lem:extension} to sequences in $M_\alpha$:
\begin{lemma}\label{lem:times}
Assume that $\alpha\in S^3\cup S^4$, that
$\Xi$ is a $P_\alpha$-name for a 
FAM and that
$\Xi_0$, the restriction of $\Xi$ to $M_\alpha$, is forced to be in
$M_\alpha$.
Then there is a $P_{\alpha+1}$-name $\Xi^+$ for a FAM
such that
for all $(\stem^*,\loss^*)$-sequences $\bar q$ in $M_\alpha$,
\[
\lim\nolimits_{\Xi_0}(\bar q)\in G(\alpha)\text{ implies }\Xi^+(A_{\bar q})\ge 1-\sqrt{\loss^*}.
\]
\end{lemma}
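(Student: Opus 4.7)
The plan is to mimic the strategy of Lemma~\ref{lem:extension}, with ``$V$'' replaced by ``$M_\alpha$'' and ``$V[G]$'' by ``$V[G_{\alpha+1}]$''. Working in $V[G_{\alpha+1}]$, $\Xi$ becomes a partial FAM with domain $\mathcal P(\omega)^{V[G_\alpha]}$. I will enumerate as $\bar q^j$ the $(\stem^*_j,\loss^*_j)$-sequences in $M_\alpha$ with $\lim_{\Xi_0}(\bar q^j)\in G(\alpha)$, set $a^j_k\defeq|\{\ell\in I_k:\,q^j_\ell\in G(\alpha)\}|/|I_k|$ and $b^j\defeq 1-\loss^*_j$, and apply Fact~\ref{fact:FAMextensions}(\ref{item:FAMsucc}) to extend $\Xi$ to a FAM $\Xi^+$ with $\Av_{\Xi^+}(\bar a^j)\ge b^j$ for every $j$. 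The target bound $\Xi^+(A_{\bar q^j})\ge 1-\sqrt{\loss^*_j}$ then follows from the same convexity estimate as at the end of Lemma~\ref{lem:extension}, so the whole problem reduces to verifying the hypothesis of Fact~\ref{fact:FAMextensions}(\ref{item:FAMsucc}).

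That hypothesis demands, for every $V[G_\alpha]$-partition $(B_m)_{m<m^*}$, every $\epsilon>0$, $k^*\in\omega$, and finite $I'$, a finite $u\subseteq\omega\setminus k^*$ which $\epsilon$-approximates each $\Xi(B_m)$ and satisfies $\frac1{|u|}\sum_{k\in u}a^j_k\ge b^j-\epsilon$ for all $j\in I'$. I will pick some $q\in G(\alpha)$ stronger than every $\lim_{\Xi_0}(\bar q^j)$, $j\in I'$, and then prove that the set $D\subseteq Q_\alpha$ of $q'\le q$ admitting some $u\in V[G_\alpha]$ with the $\Xi(B_m)$-approximation together with $\frac1{|u|}\sum_k\frac{|\{\ell\in I_k:\,q'\le q^j_\ell\}|}{|I_k|}\ge b^j-\epsilon$ is dense in $Q_\alpha$ below $q$. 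Any $q'\in D\cap G(\alpha)$ and its witness $u$ then verify the hypothesis, because $q'\in G(\alpha)$ and $q'\le q^j_\ell$ force $q^j_\ell\in G(\alpha)$, so $a^j_k$ dominates the displayed fraction.

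To prove this density I would carry out, inside $V[G_\alpha]$, the combinatorial construction of Step~3 of Lemma~\ref{lem:QIIhaslimits} (when $\alpha\in S^4$) or its random-forcing analogue from \cite[2.18]{sh} (when $\alpha\in S^3$). Given $q'''\le q$ in $Q_\alpha$, pick a node $s\in q'''$ of height exceeding every $|\stem^*_j|$ and the counting bound $M$ from Lemma~\ref{lem:kjwrjio}. Since $q'''\in M_\alpha$, also $s\in M_\alpha$, and the auxiliary sets $Z^j_s\defeq\{k:\,s\in q^j_k\}$ lie in $M_\alpha$ with $\Xi(Z^j_s)=\Xi_0(Z^j_s)\ge 1-\frac13\loss^*_j$ by the construction of $\lim_{\Xi_0}(\bar q^j)$. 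Apply Lemma~\ref{lem:kjwrjio} in $V[G_\alpha]$ with the FAM $\Xi$ to the combined family $\{B_m\}_{m<m^*}\cup\{Z^j_s\}_{j\in I'}$ to obtain a $u\in V[G_\alpha]$ of size $\le M$ simultaneously $\epsilon$-approximating every listed $\Xi$-value.

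The corresponding $q'\in Q_\alpha$ is then built inside $M_\alpha$: the finite collection $\{q'''\}\cup\{q^j_\ell:\,j\in I',\,k\in u\cap Z^j_s,\,\ell\in I_k\}$ is a finite set of $M_\alpha$-conditions (hence itself an element of $M_\alpha$) whose members all contain $s$ above their stems, so Lemma~\ref{lem:QIIbasic}(\ref{item:linked}) (for $\alpha\in S^4$) or intersection of random trees (for $\alpha\in S^3$) supplies a common refinement $r\in M_\alpha$, which is further extended inside $M_\alpha$ to $q'\in Q_\alpha$ deciding membership in $G(\alpha)$ for each of the finitely many relevant $q^j_\ell$. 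The arithmetic at the end of Step~3 of Lemma~\ref{lem:QIIhaslimits} then verifies the required inequality with this $q'$ and $u$, completing the density argument. The main obstacle, and the reason the lemma is not a direct consequence of Lemma~\ref{lem:extension}, is precisely this split of venues: the partition $(B_m)$ may escape $M_\alpha$, so $u$ must be chosen in $V[G_\alpha]$ using the full FAM $\Xi$, whereas $q'$ must remain in $Q_\alpha\subseteq M_\alpha$ in order to be eligible for $G(\alpha)$; the plan is to thread this needle by appealing to $V[G_\alpha]$ only for the measure-theoretic step while keeping all forcing-side constructions inside $M_\alpha$.
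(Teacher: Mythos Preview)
Your approach is essentially correct but follows a genuinely different route from the paper.  The paper exploits a structural decomposition: since $P'_\alpha$ is complete in $P_\alpha$, one writes $P_\alpha=P'_\alpha*R$ and observes that, from the viewpoint of $M_\alpha$, the remaining forcing is the \emph{product} $R\times Q_\alpha$.  Lemma~\ref{lem:extension} is then applied as a black box inside $M_\alpha$ to produce a $Q_\alpha$-name $\Xi_0^+$, while $\Xi$ is an $R$-name; a separate common-extension fact (\cite[1.6]{sh}) yields a FAM $\Xi^+$ extending both.  Nothing about the internal construction of $\lim_{\Xi_0}$ is reopened.  Your route instead bypasses the product and the common-extension lemma: you work directly in $V[G_{\alpha+1}]$, treat $\Xi$ as a partial FAM, and re-verify the hypothesis of Fact~\ref{fact:FAMextensions}(\ref{item:FAMsucc}) by rerunning the ``strong FAM limit'' argument (Step~3 of Lemma~\ref{lem:QIIhaslimits}, resp.\ the random analogue) in a mixed-model setting where the partition lives in $V[G_\alpha]$ but all conditions stay in $M_\alpha$.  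This is more hands-on but avoids quoting \cite[1.6]{sh}, and it makes explicit why only the measure-theoretic choice of $u$ needs the bigger model.

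One minor repair: in your density step the finite family you intersect should consist of $q'''$ together with the \emph{auxiliary} conditions $q^j_k$ (from Step~1 of Lemma~\ref{lem:QIIhaslimits}) for $k\in u\cap Z^j_s$, not the original sequence members $q^j_\ell$; it is the $q^j_k$ that are guaranteed to contain $s$ (this is the definition of $Z^j_s$) and that force the frequency bound on $\{\ell\in I_k:\,q^j_\ell\in G\}$.  Correspondingly the height of $s$ must exceed $M\cdot|I'|$, not just $M$, so that Lemma~\ref{lem:QIIbasic}(\ref{item:linked}) applies to the whole family.  With these cosmetic fixes your argument goes through.
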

$A_{\bar q}$ was defined in~\eqref{eq:ajkrhw}
(here we use $G(\alpha)$ instead of $G$, of course).

\begin{proof} 
This Lemma is implicitly used in \cite{sh}.
Note that $P_\alpha'$ is a complete
subforcing of $P_\alpha$, 
and so there is a quotient $R$
such that $P_\alpha=P_\alpha'*R$.
We consider the following (commuting) diagram:
\[
\xymatrix@=2.5ex{
V\ar[r]^{P_\alpha}\ar[dr]_{P'_\alpha} & V_\alpha\ar[r]^{Q_\alpha}& V_{\alpha+1} \\
& M_\alpha\ar[u]_R\ar[r]_{Q_\alpha} &\myex\ar[u]\\
}
\]
%
Note that ($P'_\alpha$ forces that)
$R*Q_\alpha=R\times Q_\alpha$.
So from the point of view of $M_\alpha$:
\begin{itemize}
\item 
$Q_\alpha=Q^\mathrm{full}_\alpha$ has FAM limits,
and $\Xi_0$ is a FAM.
So there is a $Q_\alpha$-name for 
a FAM $\Xi_0^+$ satisfying Lemma~\ref{lem:extension}.
\item
$R$ is a ccc forcing, and there is
an $R$-name\footnote{We identify the $P_\alpha$-name $\Xi$ in 
$V$ and the induced $R$-name in $M_\alpha=V[G'_\alpha]$.}
$\Xi$ for a FAM extending $\Xi_0$.
\item So 
there is $R\times Q_\alpha$-name 
    $\Xi^+$ for a FAM extending both $\Xi^+_0$ and $\Xi$
    (cf.~\cite[Claim 1.6]{sh}).
\end{itemize}
Back in $V$, this defines the $P_{\alpha+1}$-name $\Xi^+$.
Let $\bar q=(q_\ell)_{\ell\in \omega}$ be a sequence in $M_\alpha$.
Then $M_\alpha[G(\alpha)]$ thinks:
If $\lim_{\Xi_0}(\bar q)\in G(\alpha)$, then $\Xi_0^+(A_{\bar q})$ 
is large enough.
This is upwards absolute to $V[G_{\alpha+1}]$
(as $A_{\bar q}$ is absolute).
\end{proof}

For later reference, we will reformulate the lemma for a specific instance
of ``sequence in $M_\alpha$''. Recall that a 
sequence in $M_\alpha$ corresponds to a
``$P'_\alpha$-name of a sequence in $Q^\mathrm{full}_\alpha$''.
This is not equivalent to a
``$P_\alpha$-name for a sequence in $Q_\alpha$'',
which would correspond to an arbitrary sequence in $Q_\alpha$
(of which there are $|\alpha+\aleph_0|^{\aleph_0}$ many,
while there are only less than $\lambda_i$ many sequences in $M_\alpha$).
However, we can define the following:
\begin{definition}\label{def:explicit}
\begin{itemize}
    \item 
An explicit $Q_\alpha$-condition (in $V$) is a $P'_\alpha$-name for 
a $Q^\mathrm{full}_\alpha$ condition.
\item
A condition $p\in \Pa$ is explicit, if for all $\alpha\in\supp(p)\cap(S^4\cup S^5)$,
$p(\alpha)$ is an explicit $Q_\alpha$-condition.
\end{itemize}
\end{definition}
Here we mean that for $p(\alpha)$ there is 
a $P'_\alpha$-name $q_\alpha$ such that $p\restriction \alpha\Vdash p(\alpha)=q_\alpha$ (and the map $\alpha\mapsto q_\alpha$ exists in the ground model, i.e., we do not just have a $P_\alpha$-name for a $P'_\alpha$-condition $q_\alpha$).
\begin{lemma}
The set of explicit conditions is dense.
\end{lemma}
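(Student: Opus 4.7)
The plan is to prove density by induction on $\alpha \le \delta_5$: every $p \in P_\alpha$ admits an explicit strengthening in $P_\alpha$. The base case $\alpha = 0$ is trivial. For a limit $\alpha$, any $p \in P_\alpha$ has finite support bounded strictly below $\alpha$, so $p$ already lies in $P_\beta$ for some $\beta < \alpha$, and the inductive hypothesis applies directly (explicitness is intrinsic to $p$ and does not depend on the ambient iteration length). The substance is therefore in the successor step $\alpha \to \alpha+1$.

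The key auxiliary fact is the following: suppose $\alpha$ is a coordinate at which Definition~\ref{def:explicit} requires explicitness, so in particular $Q_\alpha = Q^\mathrm{full}_\alpha \cap M_\alpha$. Let $\dot q$ be any $P_\alpha$-name for an element of $Q_\alpha$, and let $p'' \in P_\alpha$. Then there exist $p' \le p''$ in $P_\alpha$ and a $P'_\alpha$-name $\tilde q$ for a $Q^\mathrm{full}_\alpha$-condition with $p' \forces \dot q = \tilde q$. To prove this, fix a $P_\alpha$-generic filter $G_\alpha$ containing $p''$; by Assumption~\ref{asm:completesubforcing} the trace $G'_\alpha \defeq G_\alpha \cap P'_\alpha$ is $P'_\alpha$-generic over $V$ and determines $M_\alpha$. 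Since $\dot q[G_\alpha] \in Q_\alpha \subseteq M_\alpha = V[G'_\alpha]$, some $P'_\alpha$-name $\tilde q$ in the ground model satisfies $\tilde q[G'_\alpha] = \dot q[G_\alpha]$; by the forcing theorem some $p' \in G_\alpha$ with $p' \le p''$ then forces $\dot q = \tilde q$.

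With this in hand, the successor step for $p \in P_{\alpha+1}$ runs as follows. If $\alpha$ is not a coordinate at which explicitness is required, invoke the inductive hypothesis on $p \restriction \alpha$ to obtain an explicit $\bar p \le p \restriction \alpha$ in $P_\alpha$; the $P_{\alpha+1}$-condition whose restriction to $\alpha$ is $\bar p$ and whose $\alpha$-th coordinate is $p(\alpha)$ is explicit and below $p$. Otherwise, first apply the auxiliary fact with $\dot q \defeq p(\alpha)$ and $p'' \defeq p \restriction \alpha$ to obtain $p^- \le p \restriction \alpha$ and an explicit $\tilde q$ with $p^- \forces p(\alpha) = \tilde q$; then apply the inductive hypothesis to $p^-$ to obtain an explicit $p^{--} \le p^-$ in $P_\alpha$; the $P_{\alpha+1}$-condition whose restriction to $\alpha$ is $p^{--}$ and whose $\alpha$-th coordinate is $\tilde q$ is explicit and below $p$.

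The main obstacle is the auxiliary fact, which packages the standard observation that, provided a subforcing is complete in the outer forcing, a name for an element of the intermediate extension can be refined (after strengthening) to a name in the subforcing. It applies precisely because Assumption~\ref{asm:completesubforcing} secures the completeness of $P'_\alpha$ in $P_\alpha$; no combinatorial input about the specific nature of $Q^\mathrm{full}_\alpha$ is needed.
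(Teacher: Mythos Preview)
Your proof is correct and follows essentially the same approach as the paper: induction on $\alpha$, with the limit case trivial by finite supports and the successor case handled by first replacing $p(\alpha)$ by an equivalent $P'_\alpha$-name (using that $Q_\alpha\subseteq M_\alpha$ and that $P'_\alpha$ is complete in $P_\alpha$) and then applying the inductive hypothesis to the restriction. The paper's version is simply terser, combining your two successor sub-steps into one and not separating out the ``auxiliary fact'' or the case distinction on whether $\alpha$ is a coordinate at which explicitness is demanded.
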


\begin{proof}
We show by induction that the set $D_\alpha$ of explicit conditions in
$P_\alpha$ is dense in $P_\alpha$.
As we are dealing with FS iterations, limits are clear.
Assume that $(p,q)\in P_{\alpha+1}$.
Then $p$ forces that there is a $P'_\alpha$-name $q'$ 
such that $q'=q$. Strengthen $p$ to some $p'\in D_\alpha$ deciding $q'$.
Then $(p',q')\le (p,q)$ is explicit.
\end{proof}

Note that any sequence in $V$ of explicit $Q_\alpha$-conditions
defines a sequence of conditions in $M_\alpha$ (as $V\subseteq M_\alpha$). 
So we get:
\begin{lemma}\label{lem:explicit}
Let $\alpha$, $\Xi$, and $\Xi^+$ be as in Lemma~\ref{lem:times}, and
let $(p_\ell)_{\ell\in\omega}$ be (in $V$) 
a sequence of explicit conditions in $\Pa$
such that 
$\alpha\in\supp(p_\ell)$
for all $\ell\in\omega$. Set $q_\ell\defeq p_\ell(\alpha)$
and $\bar q\defeq (q_\ell)_{\ell\in\omega}$, and assume
that $(\stem(q_\ell), \loss(q_\ell))$ is forced to be equal to some
constant $(\stem^*,\loss^*)$. 

Then there is a $P'_\alpha$-name for a $Q^\mathrm{full}_\alpha$-condition 
(and thus a $P_\alpha$-name for a $Q_\alpha$-condition)
$\lim_{\Xi_0}(\bar q)$
such that 
$\lim_{\Xi_0}(\bar q)$
forces that $\Xi^+(A_{\bar q})\le 1-\sqrt{\loss^*}$.
\end{lemma}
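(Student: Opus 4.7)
The plan is to reduce the statement directly to Lemma~\ref{lem:times} by observing that the \emph{explicit} nature of the conditions $p_\ell$ turns $\bar q$ into a bona fide sequence in $M_\alpha$ of the kind to which~\ref{lem:times} applies.

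First I would unpack what ``explicit'' buys us. By Definition~\ref{def:explicit}, for each $\ell$ the coordinate $q_\ell = p_\ell(\alpha)$ is by fiat a $P'_\alpha$-name for a condition in $Q^{\mathrm{full}}_\alpha$, and the map $\ell\mapsto q_\ell$ lives in $V$. From this I can assemble, in $V$, a single $P'_\alpha$-name $\bar q$ whose evaluation is the $\omega$-sequence $(q_\ell[G'_\alpha])_{\ell\in\omega}$ of $Q^{\mathrm{full}}_\alpha$-conditions in $M_\alpha$. The hypothesis that $(\stem(q_\ell),\loss(q_\ell))$ is forced to equal the constant $(\stem^*,\loss^*)$ is a $P_\alpha$-forcing statement about Borel-definable functions on $Q^{\mathrm{full}}_\alpha$; since $P'_\alpha$ is a complete subforcing of $P_\alpha$ (Assumption~\ref{asm:completesubforcing}) and since the $q_\ell$ are already $P'_\alpha$-names, the statement is absolute and is therefore forced by $P'_\alpha$ as well. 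Hence $M_\alpha$ sees $\bar q$ as a $(\stem^*,\loss^*)$-sequence in $Q^{\mathrm{full}}_\alpha$.

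Next I would move inside $M_\alpha$. Because $\alpha\in S^3\cup S^4$, the forcing $Q^{\mathrm{full}}_\alpha$ is either random or $\QII$, so by Lemmas~\ref{lem:randomhaslimits} and~\ref{lem:QIIhaslimits} it has strong FAM-limits for intervals; and $\Xi_0$ is a FAM in $M_\alpha$ by hypothesis. Thus $M_\alpha$ can form $\lim_{\Xi_0}(\bar q)\in Q^{\mathrm{full}}_\alpha\cap M_\alpha = Q_\alpha$. Reading this construction back in $V$ yields a $P'_\alpha$-name for a $Q^{\mathrm{full}}_\alpha$-condition, which (via the complete embedding $P'_\alpha\hookrightarrow P_\alpha$) is in particular a $P_\alpha$-name for a $Q_\alpha$-condition, giving the object demanded by the lemma.

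The forcing statement about $\Xi^+(A_{\bar q})$ is now precisely the conclusion of Lemma~\ref{lem:times} applied to the sequence $\bar q$ we have just identified as a $(\stem^*,\loss^*)$-sequence in $M_\alpha$; the equivalence ``$\lim_{\Xi_0}(\bar q)\in G(\alpha)$ implies \dots'' in~\ref{lem:times} is the same as saying $\lim_{\Xi_0}(\bar q)$ forces the bound. The only real work is bookkeeping: matching the external notion ``$V$-sequence of $P'_\alpha$-names'' with the internal notion ``$P'_\alpha$-name for an $\omega$-sequence in $M_\alpha$'', and checking the Borel-absoluteness step that transports the common-stem-and-loss hypothesis from $P_\alpha$ down to $P'_\alpha$. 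Once those points are settled the lemma is essentially a restatement of~\ref{lem:times} specialized to the dense class of explicit conditions, so I do not expect any genuine obstacle beyond this translation.
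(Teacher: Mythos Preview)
Your proposal is correct and matches the paper's approach: the paper treats this lemma as an immediate consequence of Lemma~\ref{lem:times}, justified by the single remark preceding it that any $V$-sequence of explicit $Q_\alpha$-conditions defines a sequence in $M_\alpha$ (since $V\subseteq M_\alpha$). Your write-up simply unpacks that one-line observation in more detail, including the absoluteness of the $(\stem,\loss)$ hypothesis and the formation of $\lim_{\Xi_0}(\bar q)$ as a $P'_\alpha$-name.
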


\subsection{Dealing with \texorpdfstring{$\mathfrak{b}$}{b} (without GCH)}
In this section, we follow~\cite[1.3]{ten}, additionally using techniques inspired by~\cite{sh}.

We assume  the following
(in addition to Assumption~\ref{asm:P}):
\begin{assumption}\label{asm:chi}(This section only.) 
$\chi<\lambda_3$ is regular such that $\chi^{\aleph_0}=\chi$,  
$\chi^+\ge \lambda_2$ and
$2^\chi =|\delta_5|= \lambda_5$.
\end{assumption}

Set $S^0=\lambda_5\cup S^1\cup S^2$.
So $\delta_5=S^0\cup S^3\cup S^4$,
and $\Pa$ is a FS ccc iteration along $\delta_5$ such that $\alpha\in S^0$
implies $|Q_\alpha|<\lambda_2$, i.e., $|Q_\alpha|\le\chi$
(and $Q_\alpha$ is a partial random forcing for $\alpha\in S^3$ and a partial
$\QII$-forcing for $\alpha\in S^4$).

Let us fix, for each $\alpha\in S^0$, a $P_\alpha$-name 
\begin{equation}\label{eq:ia}
i_\alpha:Q_\alpha\to\chi\text{ injective}.
\end{equation}

%

\begin{definition}
\begin{itemize}
   \item A ``partial guardrail'' is a 
function $h$ defined on a subset of $\delta_5$  
such that, for $\alpha\in\dom(h)$:
$h(\alpha)\in \chi$ if $\alpha\in S^0$; and $h(\alpha)$
is a pair $(x,y)$ with $x\in H(\aleph_0)$ and $y$ a rational number otherwise.
(Any ($\stem,\loss$)-pair is of this form.) 
%
  \item A ``countable guardrail'' is a partial guardrail with
  countable domain. A ``full guardrail'' is a partial guardrail with domain $\delta_5$.
\end{itemize}
\end{definition}  

We will use the following lemma, which is a consequence of the Engelking-Karlowicz theorem~\cite{MR0196693} on the density of box products (cf.~\cite[5.1]{MR3513558}):  
\begin{lemma}\label{use.EK}
(As $|\delta_5|\le 2^\chi$.) There is a family $H^*$ of full guardrails
of cardinality $\chi$ such that each countable guardrail is extended by
some $h\in H^*$. 
We will fix such an $H^*$.
\end{lemma}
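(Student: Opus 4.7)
The plan is to reduce the statement to the Engelking--Karlowicz theorem on the density of box products. At every coordinate $\alpha \in \delta_5$ the set $X_\alpha$ of admissible guardrail values has cardinality $\le\chi$: for $\alpha \in S^0$, $X_\alpha = \chi$ by definition, while for $\alpha \in S^3 \cup S^4$, $X_\alpha \subseteq H(\aleph_0) \times \mathbb{Q}$ is countable. Fixing an injection $\iota_\alpha : X_\alpha \hookrightarrow \chi$ for each $\alpha$, a (partial or full) guardrail corresponds to a (partial or full) function $\delta_5 \to \chi$, and ``extends'' is preserved under this coding.

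Next, I would invoke the Engelking--Karlowicz theorem in the following form: if $\kappa$ is an infinite cardinal with $\kappa^{\aleph_0} = \kappa$ and $|I| \le 2^\kappa$, then there is a family $F$ of functions $I \to \kappa$ with $|F| = \kappa$ such that every countable partial function $g : J \to \kappa$ (with $J \subseteq I$ countable) is extended by some $f \in F$. Applying this with $\kappa = \chi$ (which satisfies $\chi^{\aleph_0} = \chi$ by Assumption~\ref{asm:chi}) and $I = \delta_5$ (whose cardinality is $\lambda_5 = 2^\chi$, again by Assumption~\ref{asm:chi}), I obtain such a family $F$ of size $\chi$.

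Finally, pulling back along the injections $\iota_\alpha$ produces a family $H^*$ of $\chi$ many full guardrails. Given any countable partial guardrail $h_0$, its coding $\iota \circ h_0$ is a countable partial function into $\chi$, which by choice of $F$ is extended by some $f \in F$; the corresponding member of $H^*$ then extends $h_0$, as required.

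The whole argument is routine bookkeeping once the right form of Engelking--Karlowicz is in hand; the only real input is the cardinal arithmetic assumption $\chi^{\aleph_0} = \chi$ in Assumption~\ref{asm:chi}, which is precisely what allows the countable-box version of the density statement (as opposed to merely the finite-box version that would follow from $2^{\aleph_0} \le \chi$ alone).
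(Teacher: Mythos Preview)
Your proposal is correct and follows exactly the route the paper indicates: the paper does not give a detailed proof but simply records that the lemma is a consequence of the Engelking--Karlowicz theorem on the density of box products, citing \cite[5.1]{MR3513558}. Your reduction---coding guardrails as functions $\delta_5\to\chi$ and invoking the countable version of the theorem under the hypothesis $\chi^{\aleph_0}=\chi$ from Assumption~\ref{asm:chi}---is precisely the intended argument.
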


Note that the notion of guardrail (and the density property required in
Lemma~\ref{use.EK}) only
depends on the ``fixed'' parameters 
$\chi$, $\delta_5$, $S^0$, $S^3$ and $S^4$;
so we can fix an $H^*$ that will work for all these fixed parameters
and all choices of the cofinal parameter $\bar w$.

Once we have decided on $\bar w$, and thus have defined $\Pa$, we can define the following:
\begin{definition}
$D^*\subseteq \Pa$ consists of $p$ such that there is a 
partial guardrail $h$ (and we say: 
``$p$ follows $h$'') with $\dom(h)\supseteq \supp(p)$ and, 
for all $\alpha\in\supp(p)$, 
\begin{itemize}
\item If $\alpha\in S^0$,
then $p\restriction\alpha\Vdash i_\alpha(p(\alpha))=h_\alpha$.
\item If $\alpha\in S^3\cup S^4$, the empty condition of $P_\alpha$ forces
\[
p(\alpha)\in Q_\alpha\text{ and }(\stem(p(\alpha)),\loss(p(\alpha)))=h(\alpha).
\]
\item Furthermore, $\sum_{\alpha\in \supp(p)\cap(S^3 \cup S^4)}\sqrt{\loss(p(\alpha))}<\frac12$.
\item $p$ is explicit (as in Definition~\ref{def:explicit}).
\end{itemize}

%
\end{definition}

\begin{lemma}
$D^*\subseteq \Pa$ is dense.
\end{lemma}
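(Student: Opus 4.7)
The plan is to start with an arbitrary $p\in\Pa$, invoke the preceding lemma to assume $p$ is explicit, and then sweep through the finite support in increasing order, modifying each coordinate to fit the $D^*$-prescription. Enumerate $\supp(p)=\{\alpha_0<\cdots<\alpha_{n-1}\}$ and let $F=\supp(p)\cap(S^3\cup S^4)$. Fix a rational $\epsilon>0$ with $|F|\sqrt{\epsilon}<\tfrac12$; the overall loss bound will then come for free. I will build a decreasing chain $p=p^{(0)}\ge p^{(1)}\ge\cdots\ge p^{(n)}=p'$ such that step $i$ only touches coordinates ${\le}\alpha_i$ and installs the $D^*$-condition at $\alpha_j$ for all $j\le i$.

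The $S^0$-step is routine. If $\alpha=\alpha_i\in S^0$, then $|Q_\alpha|\le\chi$ (by Assumption~\ref{asm:chi}) and $i_\alpha$ is a $P_\alpha$-name for an ordinal ${<}\chi$, so I simply strengthen $p^{(i)}\restriction\alpha$ in $P_\alpha$ to some $q$ deciding $i_\alpha(p^{(i)}(\alpha))=h(\alpha)$, leaving the explicit $P'_\alpha$-name $p^{(i)}(\alpha)$ untouched.

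The real obstacle is the $F$-step, where I need an explicit $p'(\alpha)$ whose stem and loss are decided by the \emph{empty} condition. Let $\tau:=p^{(i)}(\alpha)$, a $P'_\alpha$-name for a $Q^\mathrm{full}_\alpha$-condition. Using density of small-loss conditions (part of Assumption~\ref{asm:qlim}), I pass to a $P'_\alpha$-name $\tau^*\le\tau$ with loss ${<}\epsilon$. I then strengthen $p^{(i)}\restriction\alpha$ in $P_\alpha$ to some $q$ that decides $(\stem(\tau^*),\loss(\tau^*))=(s^*,\epsilon^*)$. Since $P'_\alpha$ is a complete subforcing of $P_\alpha$ (Assumption~\ref{asm:completesubforcing}), I can find a $P'_\alpha$-condition $r$ compatible with $q$ that itself decides these two values. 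Choose, in $V$, a canonical ground-model condition $T_{s^*,\epsilon^*}\in Q^\mathrm{full}_\alpha$ of stem $s^*$ and loss $\epsilon^*$: for random forcing a subtree of $[s^*]$ of the correct Lebesgue measure, and for $\QII$ the full tree above an extension of $s^*$ whose height lies in the band making the full-tree loss (with infinite norms) equal to $\epsilon^*$. Now define the $P'_\alpha$-name $\tilde\tau$ by setting $\tilde\tau=\tau^*$ below $r$ and $\tilde\tau=T_{s^*,\epsilon^*}$ below each $P'_\alpha$-condition incompatible with $r$. Then the empty $P'_\alpha$-condition forces $\stem(\tilde\tau)=s^*$ and $\loss(\tilde\tau)=\epsilon^*$, while $q\Vdash\tilde\tau=\tau^*\le\tau$. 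Set $p^{(i+1)}(\alpha)=\tilde\tau$ and $p^{(i+1)}\restriction\alpha=q$.

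After $n$ steps, $p':=p^{(n)}$ is explicit, follows the partial guardrail $h$ accumulated during the construction, and satisfies $\sum_{\alpha\in F}\sqrt{\loss(p'(\alpha))}<|F|\sqrt\epsilon<\tfrac12$, hence $p'\in D^*$. The $D^*$-prescription at an earlier coordinate $\alpha_j$ survives later strengthenings of $p^{(\cdot)}\restriction\alpha_i$ (for $i>j$), because ``$p'\restriction\alpha_j\Vdash i_{\alpha_j}(p'(\alpha_j))=h(\alpha_j)$'' is preserved under further strengthening and ``$\tilde\tau$ absolutely has the prescribed stem and loss'' is a property of the $P'_{\alpha_j}$-name alone.
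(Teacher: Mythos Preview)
Your argument has a genuine gap: the forward sweep does not close. At each step you strengthen $p^{(i)}\restriction\alpha_i$ to some $q\in P_{\alpha_i}$ (to decide $i_{\alpha_i}(p^{(i)}(\alpha_i))$ in the $S^0$ case, or the stem/loss pair in the $S^3\cup S^4$ case). This $q$ will in general have support strictly larger than $\supp(p^{(i)})\cap\alpha_i$, and the new coordinates---which may well lie in $S^3\cup S^4$---must also satisfy the $D^*$ requirements (explicitness, stem and loss decided by the empty condition, and contributing to the $\sqrt{\loss}$-sum). But your $n$-step recursion only ever visits the original coordinates $\alpha_0,\dots,\alpha_{n-1}$, so these new coordinates are never processed. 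Hence $p'=p^{(n)}$ need not be explicit, need not follow any guardrail on its full support, and the sum $\sum_{\alpha\in\supp(p')\cap(S^3\cup S^4)}\sqrt{\loss(p'(\alpha))}$ may exceed $\tfrac12$ (your bound was only over the original $F$, with $\epsilon$ calibrated to $|F|$).

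The paper's proof avoids this precisely by reversing the order: it proves by induction on the iteration length that, for \emph{every} sequence $(\epsilon_n)_{n\in\omega}$, the set of conditions whose losses (read in \emph{decreasing} order of support) are bounded by the $\epsilon_n$ is dense. At a successor $\alpha+1$ one first treats the top coordinate $\alpha$---passing to $q'\le p(\alpha)$ with $\loss(q')<\epsilon_0$, strengthening below to decide stem and loss, and applying your default-name trick---and only \emph{then} invokes the inductive hypothesis on the full strengthened condition in $P_\alpha$ with the shifted sequence $(\epsilon_1,\epsilon_2,\dots)$. This automatically absorbs whatever new support the strengthening introduced. Your per-coordinate construction of $\tilde\tau$ is exactly what that induction needs at each stage; it is only the global organization (increasing sweep over a fixed finite list) that fails.

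A minor additional slip: as written, $q$ need not force $r\in G$, so ``$q\Vdash\tilde\tau=\tau^*$'' is unjustified. The clean fix is to define $\tilde\tau$ as $\tau^*$ whenever $(\stem(\tau^*),\loss(\tau^*))=(s^*,\epsilon^*)$ and as $T_{s^*,\epsilon^*}$ otherwise; then the empty condition still decides stem and loss, and $q$ directly forces $\tilde\tau=\tau^*$.
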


\begin{proof}
By induction we show that for any sequence $(\epsilon_i)_{i\in\omega}$
of positive numbers
the following set of $p$ is dense: 
If $\supp(p)=\{\alpha_0,\dots,\alpha_m\}$, where $\alpha_0>\alpha_1>,\dots$
(i.e., we enumerate downwards),
$\loss^p_{\alpha_n}<\epsilon_n$ whenever $\alpha_n\in S^3\cup S^4$.
For the successor step, we use that the set of $q\in Q_\alpha$
such that $\loss(q)<\epsilon_0$ is forced to be dense.
\end{proof}

\begin{remark}
So the set of conditions following \emph{some} guardrail  is dense.
For each \emph{fixed} guardrail $h$, the set      
of all conditions $p$  following $h$ is 
$n$-linked, 
provided that each $\loss$ in the domain of $h$
is $<\frac1n$ (cf.\ Assumption~\ref{asm:qlim}).
\end{remark}

\begin{definition}
  A ``$\Delta$-system with heart $\nabla$ following the guardrail $h$'' is a family $\bar p=(p_i)_{i\in I}$ 
     of conditions such that
     \begin{itemize}
       \item all $p_i$ are in $D^*$ and follow $h$, 
       \item 
       $(\supp(p_i))_{i\in I}$ is a $\Delta$ system with heart $\nabla$ in the usual sense (so $\nabla\subseteq \delta_5$ is finite)
       \item        
       the following is independent of $i\in I$:
         \begin{itemize}
           \item $|\supp(p_i)|$, which we call $m^{\bar p}$. 
           
           Let $(\alpha^{\bar p,n}_i)_{n<m^{\bar p}}$ increasingly enumerate $\supp(p_i)$.
           \item Whether $\alpha^{\bar p,n}_i$ is less than, equal to or bigger than the $k$-th element of $\nabla$. 
           
           In particular it is independent of $i$ whether $\alpha^{\bar p,n}_i\in\nabla$, in which case we call $n$ a  ``heart position''.
           \item Whether $\alpha^{\bar p,n}_i$ is in $S^0$, in $S^3$ or in $S^4$.
           
           If  $\alpha^{\bar p,n}_i\in S^j$, we call $n$ an ``$S^j$-position''.
           \item If $n$ is not an $S^0$-position:\footnote{If $n$ is a $S^0$-position, $h(\alpha^{\bar p,n}_i)$ will generally not be be independent of $i$; unless of course $n$ is a heart position.}           
            The value of $h(\alpha^{\bar p,n}_i)\eqdef (\stem^{\bar p,n},\loss^{\bar p,n})$. If $n$ is an $S^0$-position, we set $\loss^{\bar p,n}\defeq 0$.  
       \end{itemize}              
       \end{itemize}   
A ``countable $\Delta$-system''
$\bar p=(p_\ell:\ell\in \omega)$
is a $\Delta$ system that additionally satisfies:
        \begin{itemize}
           \item For each non-heart position\footnote{For a heart position $n$, $(\alpha^{\bar p,n}_\ell)_{\ell\in\omega}$ is of course constant.} $n<m^{\bar p}$, the sequence $(\alpha^{\bar p,n}_\ell)_{\ell\in\omega}$ is strictly increasing.
       \end{itemize}   
\end{definition}


\begin{fact}
\begin{itemize}
\item   Each infinite $\Delta$-system $(p_i)_{i\in I}$ contains a countable $\Delta$-system. I.e., there is a sequence $i_\ell$ in $I$ such that
$(p_{i_\ell})_{\ell\in \omega}$ is a countable $\Delta$-system..
\item
If $\bar p$ is a $\Delta$-system (or: a countable $\Delta$-system)
  following $h$ with heart $\nabla$, and $\beta\in\nabla\cup (\max(\nabla+1))$, then $\bar p\restriction\beta\defeq (p_i\restriction\beta)_{i\in I}$    
  is again a $\Delta$-system (or: a countable $\Delta$-system, respectively) following $h$, now with heart $\nabla\cap \beta$.
\end{itemize}
\end{fact}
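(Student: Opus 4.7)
The plan is to verify the two assertions separately: a subsequence extraction for the first, and a routine structural check for the second.

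For the first claim, I will exploit that at each non-heart position $n < m^{\bar p}$ the map $i \mapsto \alpha^{\bar p,n}_i$ is injective on $I$, since if the values coincided for $i \neq j$ the common ordinal would lie in $\supp(p_i) \cap \supp(p_j) = \nabla$, contradicting that $n$ is non-heart. Its image is therefore an infinite set of ordinals and, being well-ordered, contains an $\omega$-subsequence that is strictly increasing in the ordinal order; pulling this back yields a countably infinite $I_1 \subseteq I$ along which position $n_1$ is strictly increasing. Iterating this extraction once per non-heart position (there are only finitely many) and using that an $\omega$-subsequence of a strictly increasing sequence of ordinals is again strictly increasing, I obtain a countable $I_K \subseteq I$ on which every non-heart position is simultaneously strictly increasing. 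Re-enumerating $I_K$ as $\{i_\ell : \ell \in \omega\}$ in the induced order then produces the desired countable $\Delta$-system.

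For the second claim, I will verify the bullet points of the $\Delta$-system definition for $(p_i \restriction \beta)_{i \in I}$, noting that $\supp(p_i \restriction \beta) = \supp(p_i) \cap \beta$. The pairwise-intersection identity gives the new heart at once:
\[
\supp(p_i \restriction \beta) \cap \supp(p_j \restriction \beta) = (\supp(p_i) \cap \supp(p_j)) \cap \beta = \nabla \cap \beta.
\]
The hypothesis on $\beta$ (lying in $\nabla$ or sitting just above $\max(\nabla)$) is exactly what is needed to invoke the placement clause of the $\Delta$-system: the predicate ``$\alpha^{\bar p,n}_i < \beta$'' becomes $i$-independent, so the number of surviving positions $m^{\bar p \restriction \beta}$, their $S^0/S^3/S^4$-typing, and the pair $(\stem^{\bar p,n}, \loss^{\bar p,n})$ on the non-$S^0$ positions all descend from the corresponding data for $\bar p$; the guardrail $h$ is unchanged on the smaller support. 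In the countable-$\Delta$-system case, strict increase at each surviving non-heart position is inherited directly by restriction of the index.

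The only step that is not pure bookkeeping is the simultaneous extraction in the first claim, and I expect no serious obstacle there either: strict increase at positions treated in earlier rounds is automatically preserved upon passing to any $\omega$-subsequence, so the extraction collapses to a finite iteration of the elementary fact that every infinite set of ordinals has an $\omega$-type strictly increasing initial subsequence.
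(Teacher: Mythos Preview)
Your proof is correct. The paper states this as a Fact without proof, treating both items as routine; your argument supplies the details that the authors left implicit, and does so along the natural lines (iterated subsequence extraction using that non-heart positions are injective into the ordinals for the first item, and checking that the placement clause of the $\Delta$-system definition makes ``$\alpha^{\bar p,n}_i < \beta$'' an $i$-independent predicate precisely under the stated hypothesis on $\beta$ for the second).

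One minor expository point: in the first claim, the phrase ``in the induced order'' at the end is slightly ambiguous, since $I$ carries no order a priori. What you mean (and what your iteration actually produces) is that $I_K$ inherits an $\omega$-enumeration from the successive subsequence extractions, and it is in \emph{that} enumeration that all non-heart positions are strictly increasing. This is clear from context, but you might say so explicitly. Similarly, the fact you invoke at the very end --- that every $\omega$-sequence of distinct ordinals has a strictly increasing $\omega$-subsequence (which is what the iteration step needs, rather than the statement about infinite \emph{sets} of ordinals) --- follows e.g.\ because an infinite strictly decreasing subsequence is impossible in the ordinals.
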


\begin{definition}\label{def:Mlimit}
Let $\bar p$ be a 
   countable 
   $\Delta$-system, 
   and assume that $\bar
   \Xi=(\Xi_\alpha)_{\alpha\in \nabla \cap (S^3 \cup S^4)}$ is  
   a sequence 
   such that each $\Xi_\alpha$ is a 
   $P_\alpha$-name for a FAM and $P_\alpha$ forces that 
   $\Xi_\alpha$ restricted to $M_\alpha$ is in $M_\alpha$.
   Then we can define $q=\lim_{\bar \Xi}(\bar p)$
   to be the following $\Pa$-condition with support $\nabla$: 
   \begin{itemize}
   \item If $\alpha\in \nabla\cap S^0 $, then $q(\alpha)$ is the common 
   value of all $p_n(\alpha)$.  (Recall that this value is already determined by 
    the guardrail $h$.)
   \item If $\alpha\in \nabla\cap (S^3 \cup S^4)$, then $q(\alpha)$ is
    (forced by $\Pa_\alpha$ to be) 
    $\lim_{\Xi_\alpha}(p_\ell(\alpha))_{\ell\in \omega}$,
    see Lemma~\ref{lem:explicit}.
   \end{itemize}
\end{definition}


We now  give a 
specific way to construct such $\bar w$, which allows to keep
$\mathfrak b$ small.

\begin{construction}\label{constr}
We can construct by induction on $\alpha\in \delta_5$ for each
$h\in H^*$ some $\Xi^{h}_\alpha$, and, if $\alpha>\kappa_5$, also $w_\alpha$, such that:
\begin{enumerate}[(a)]
\item Each $\Xi^{h}_\alpha$ is a $P_\alpha$-name of a FAM extending 
 $\bigcup_{\beta<\alpha}\Xi^{h}_\beta$.
\item If $\alpha$ is a limit of countable cofinality:
Assume $\bar p$ is a countable $\Delta$-system  in $P_\alpha$ 
following $h$,
and $n<m^{\bar p}$ such that 
$(\alpha^{\bar p,n}_\ell)_{\ell\in\omega}$ has supremum $\alpha$.
Then
$A_{\bar p,n}$ is forced to have $\Xi^{h}_\alpha$-measure $1$, where 
\[
A_{\bar p,n}\coloneq \Big\{k\in\omega:\, \Big|\big\{\ell\in I_k:\, 
p_\ell(\alpha^{\bar p,n}_\ell)\in G(\alpha^{\bar p,n}_\ell)\big\}\Big|\ge |I_k|\cdot\big(1-\sqrt{\loss^{\bar p,n}}\,\big) \Big\}
\]

\item 
For each countable $\Delta$-system $\bar p$ in $P_\alpha$ following $h$, 
the $P_\alpha$-condition
$\lim_{(\Xi^{h}_\beta)_{\beta<\alpha}}(\bar p)$ is well-defined 
and forces  
\begin{multline*}
\Xi^{h}_\alpha(A_{\bar p})\ge 
1-\sum_{n<m^{\bar p}}\sqrt{\loss^{\bar p,n}},
 \text{ where}\\
A_{\bar p}\coloneq \Big\{k\in\omega:\, \left|\big\{\ell\in I_k:\, 
p_\ell\in G_\alpha\big\}\right|\ge |I_k|\cdot \big(1-\sum_{n<m^{\bar p}}\sqrt{\loss^{\bar p,n}}\,\big)
\Big\}.
\end{multline*}
\item For $\alpha>\kappa_5$:
$w_\alpha$ is ``sufficiently closed''.
More specifically: It satisfies Assumptions~\ref{asm:P} and~\ref{asm:completesubforcing}, and 
if $\alpha\in S^3\cup S^4$ then
$P_\alpha$ forces that $\Xi^h_\alpha$
 restricted to $M_\alpha$ is in $M_\alpha$.

Actually, the set of $w_\alpha$ satisfying this
is an $\omega_1$-club set.
 
%
\end{enumerate}
\end{construction}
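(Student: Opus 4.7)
The plan is a transfinite induction on $\alpha\in\delta_5$, simultaneously handling every $h\in H^*$. At each stage one first chooses $w_\alpha$ (for $\alpha>\lambda_5$) and then defines the $P_\alpha$-names $\Xi^h_\alpha$. Both Assumption~\ref{asm:completesubforcing} (which gives an $\omega_1$-club by Lemma~\ref{lem:closure1}) and the requirement in~(d) that $\Xi^h_\alpha\restriction M_\alpha$ lie in $M_\alpha$ for $\alpha\in S^3\cup S^4$ (itself an $\omega_1$-club condition obtained by closing $w_\alpha$ under countably many name-witnesses) cut out $\omega_1$-clubs in $[\alpha]^{<\lambda_i}$. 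Since $|H^*|=\chi$ and $\cf(\chi)>\aleph_0$ (because $\chi^{\aleph_0}=\chi$), the intersection over $h\in H^*$ is again an $\omega_1$-club, and $w_\alpha$ can be picked inside it. For the easy inductive steps: at $\alpha\in S^0$ the iterand has size ${\le}\chi$, so $\Xi^h_\alpha$ is reinterpreted as a $P_{\alpha+1}$-name; at $\alpha\in S^3\cup S^4$, Lemma~\ref{lem:times} applies (by the choice of $w_\alpha$) and yields $\Xi^h_{\alpha+1}$. At a limit $\alpha$ of uncountable cofinality, set $\Xi^h_\alpha\defeq\bigcup_{\beta<\alpha}\Xi^h_\beta$: by ccc of $P_\alpha$ the union is a total FAM name, and any countable $\Delta$-system lives below some $\beta<\alpha$, so (b) and (c) are inherited.

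The substantive case is $\alpha$ a limit of countable cofinality. In the $P_\alpha$-extension, $\Xi'\defeq\bigcup_{\beta<\alpha}\Xi^h_\beta$ is a partial FAM, and we extend $\Xi'$ to a FAM $\Xi^h_\alpha$ via Fact~\ref{fact:FAMextensions}(\ref{item:FAMsucc}), prescribing that every $A_{\bar p,n}$ from clause~(b) have average~$1$ and every $A_{\bar p}$ from clause~(c) have average $\ge 1-\sum_n\sqrt{\loss^{\bar p,n}}$. Granted the approximation hypothesis of that fact, the resulting $\Xi^h_\alpha$ fulfils (b) and (c) automatically.

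The main obstacle is verifying that approximation hypothesis: given a finite $\Xi'$-partition $(B_m)_{m<m^*}$, a finite list of the $A_{\bar p}$'s and $A_{\bar p,n}$'s in question, an $\epsilon>0$ and a $k^*$, one must produce a finite $u\subseteq\omega\setminus k^*$ approximating each $\Xi'(B_m)$ to within $\epsilon$ and realising the required averages. The construction of $u$ follows the template of Lemma~\ref{lem:QIIhaslimits} (and~\cite{sh}): at each heart coordinate $\beta\in\nabla\cap(S^3\cup S^4)$, Lemma~\ref{lem:explicit}, applied to $\Xi^h_\beta$ (which restricts into $M_\beta$ by the inductive choice of $w_\beta$), produces a limit condition forcing density $\ge 1-\sqrt{\loss^{\bar p,n}}$ on a $\Xi^h_\beta$-large set of $k$; at non-heart positions, the inductive clauses~(b) and~(c) at the finitely many relevant stages below $\alpha$ supply the corresponding density; and Lemma~\ref{lem:kjwrjio} bounds $|u|$ uniformly in the finite data. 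The constraint $\sum_n\sqrt{\loss^{\bar p,n}}<\tfrac12$ built into $D^*$ keeps the aggregate loss under control. The ``measure~$1$'' in~(b) is then obtained by repeating the argument with $\sqrt{\loss^{\bar p,n}}$ replaced by an arbitrary $\epsilon'>0$, exploiting that the coordinates $\alpha^{\bar p,n}_\ell$ are genuinely cofinal in $\alpha$ to drive the measure-deficit to~$0$.
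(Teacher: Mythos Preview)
Your outline handles the easy parts of the induction correctly (uncountable-cofinality limits, successors via Lemma~\ref{lem:times}/\ref{lem:explicit}, the $\omega_1$-club for~(d)), but the countable-cofinality limit step is structured differently from the paper and contains a genuine gap. First, the order at stage $\alpha$ is reversed: $\Xi^h_\alpha$ is a $P_\alpha$-name and $P_\alpha$ does not depend on $w_\alpha$, so one defines $\Xi^h_\alpha$ \emph{first} and only then chooses $w_\alpha$ so that $\Xi^h_\alpha\restriction M_\alpha\in M_\alpha$; this is precisely what~(d) requires. Second, the paper does not invoke Fact~\ref{fact:FAMextensions}(\ref{item:FAMsucc}) at countable-cofinality limits but rather Fact~\ref{fact:FAMextensions}(\ref{item:uf}): one shows that for any $\Xi^h_\beta$-positive $A$ (some $\beta<\alpha$) and finitely many $(\bar p^j,n^j)$ as in~(b), the intersection $A\cap\bigcap_j A_{\bar p^j,n^j}$ is forced to be nonempty, which yields~(b) directly with measure~$1$. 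Clause~(c) is then \emph{derived} from~(b) plus the inductive instance of~(c) for $\bar p\restriction(\sup(\nabla)+1)$; it is not prescribed in the FAM-extension step. Your attempt to prescribe the bound in~(c) directly is problematic anyway, since that bound is conditional on $\lim_{\bar\Xi^h}(\bar p)\in G$, something Fact~\ref{fact:FAMextensions}(\ref{item:FAMsucc}) does not accommodate.

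The substantive gap is that you have not supplied the argument for the nonemptiness of $A\cap\bigcap_j A_{\bar p^j,n^j}$, and this is the technical heart of the construction. It is \emph{not} a rerun of the single-forcing argument in Lemma~\ref{lem:QIIhaslimits}. Assuming some $p$ forces emptiness, one strengthens to $p^*\in P_\beta$ deciding a $k\in A$ large enough that the cumulative binomial bound $F(|I_k|p_j';|I_k|,p_j)<\tfrac1{2j^*}$ holds for each $j<j^*$ (with $p_j'=1-\sqrt{\loss^{\bar p^j,n^j}}$, $p_j=1-\tfrac{1+\sqrt2}{2}\loss^{\bar p^j,n^j}$), and such that all relevant indices $\beta_i\in\{\alpha^{\bar p^j,n^j}_\ell:j<j^*,\ell\in I_k\}$ lie above $\beta$. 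One then builds a finite probability tree $\mathcal T$ of conditions: at an $S^3$-level one splits according to the atoms of the Boolean algebra generated by the $r^j_i=p^j_\ell(\beta_i)$, weighted by (approximate) relative Lebesgue measure; at an $S^4$-level one splits uniformly over $T^*\cap[\stem^*_i]\cap\omega^h$ for suitable $h$. In either case a random successor $t$ satisfies $q_t(\beta_i)\le r^j_i$ with probability $\ge 1-\tfrac{1+\sqrt2}{2}\loss^*_i$. The binomial estimate then guarantees that a random branch is ``bad for $j$'' with probability $<\tfrac1{2j^*}$, so some branch $t^*$ is good for all $j$, and $q_{t^*}\le p^*$ forces $k\in\bigcap_j A_{\bar p^j,n^j}$, a contradiction. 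None of this structure appears in your proposal; the references to Lemma~\ref{lem:kjwrjio} and ``the template of Lemma~\ref{lem:QIIhaslimits}'' do not substitute for it.
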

%
%
\begin{proof}
\textbf{\boldmath (a\&c) for $\cf(\alpha)>\omega$:\/}
We set $\Xi^{h}_\alpha=\bigcup_{\beta<\alpha} \Xi^{h}_\beta$.
As there are no new reals at uncountable confinalities, this is a FAM.
Each countable $\Delta$-system is bounded by some 
$\beta<\alpha$, and, by induction, (c) holds for $\beta$;  so 
(c) holds for $\alpha$ as well.

\textbf{\boldmath (a\&b) for $\cf(\alpha)=\omega$:\/}
Fix $h$.
We will show that $P_\alpha$
forces 
$A\cap \bigcap_{j<j^*}  A_{\bar p^j,n^j}\neq \emptyset$,
where $A$ is a $\Xi^{h}_\beta$-positive set for some $\beta<\alpha$,
and each $(\bar p^j,n^j)$ is as in (b).

Then we can work in the $P_\alpha$-extension and 
apply Fact~\ref{fact:FAMextensions}(\ref{item:uf}),
using $\bigcup_{\beta<\alpha}\Xi^{h}_\beta$ as
the partial FAM $\Xi'$. This gives an extension of $\Xi'$ to a FAM 
$\Xi^h_{\alpha}$
that assigns measure one to all $A_{\bar p,n}$, showing that 
(a) and (b) are satisfied.

So assume towards a contradiction that some $p\in P_\alpha$ forces 
\[ A\cap \bigcap_{j<j^*}  A_{\bar p^j,n^j}=\emptyset.\]

We can assume that $p$ decides the $\beta$ such that $A\in V_\beta$,
that $\beta$ is above the hearts of all $\Delta$-sequences 
$\bar p^j$ involved, and that 
$\supp(p)\subseteq \beta$. 
We can extend $p$ to some $p^*\in P_\beta$ to decide $k\in A$
for some ``large'' $k$:
By large, we mean:
\begin{itemize}
\item 
Let $F(l;n,p)$ (the cumulative binomial probability distribution)
be the probability that $n$ independent 
experiments, each with success probability $p$, 
will have at most $l$ successful outcomes. As
$\lim_{n\to\infty}F(n\cdot p';n,p)=0$ for all $p'<p$,
and as $\lim_{k\to\infty}|I_k|=\infty$, 
we can find some $k$ such that 
\begin{equation}\label{eq:bladw}
F(|I_k| p_j';|I_k|,p_j)<\frac1{2\cdot j^*}
\end{equation}
for all $j<j^*$, where we set 
$p_j'\defeq 1-\sqrt{\loss^{\bar p^j,n^j}}$ and 
$p_j\defeq 1-\frac{1+\sqrt2}2\cdot \loss^{\bar p^j,n^j}$.
(Note that $p_j'<p_j$, as $\loss^{\bar p^j,n^j}\le\frac12$.)
\item All elements of $Y=\{\alpha^{\bar p^j,n^j}_\ell:\ j<j^*\text{ and }\ell\in I_k\}$ 
are larger than $\beta$.
(This is possible as each sequence $(\alpha^{\bar p^j,n^j}_\ell)_{\ell<\omega}$
has supremum $\alpha$.)
We enumerate $Y$ by the
increasing sequence $(\beta_i)_{i\in M}$, and 
set $\beta_{-1}=\beta$.
\end{itemize}

We will find $q\le p^*$ forcing that $k\in \bigcap_{j<j^*}  A_{\bar p^j,n^j}$. 
To this end, we define a finite tree $\mathcal T$ of height $M$,
and assign to each $s\in \mathcal T$ of height $i$ 
a condition $q_s\in P_{\beta_{i-1}+1}$ (decreasing along each branch) 
and a probability $\prob_s\in[0,1]$,
such that $\sum_{t\rhd s}\prob_{t}=1$ for all non-terminal nodes $s\in \mathcal T$.
For $s$ the root of $\mathcal T$, i.e., for the unique $s$ of height 0, we set 
$q_{s}=p^*\in P_{\beta_{-1}}$ and $\prob_{s}=1$.


So assume we have already constructed $q_s\in P_{\beta_{i-1}+1}$ for some $s$ of height $i<M$.
We will now
take care of index $\beta_i$ and construct the set of successors of $s$, and for each successor $t$,
a $q_t\le q_s$ in $P_{\beta_{i}+1}$.


\begin{itemize}
\item If $\beta_i\in S^0$, the guardrail guarantees that 
$\beta_i\in\supp(p^j_\ell)$ implies
$p^j_\ell\restriction\beta_i\Vdash i_{\beta_i}(p^j_\ell(\beta_i))=h(\beta_i)$. 
In that case we use a unique $\mathcal T$-successor $t$ of $s$,
and we set
$q_t=q_s^\frown (\beta_i,i^{-1}_{\beta_i}h(\beta_i))$, and $\prob_t=1$.

In the following we assume $\beta_i\notin S^0$.
\item 
Let $J_i$  
be the set of $j<j^*$ such that there is an $\ell\in I_k$
with $\alpha^{\bar p^j,n^j}_\ell=\beta_i$
(there is at most one such $\ell$).
For $j\in J_i$,
set $r_i^j=p^j_\ell(\beta_i)$ for the according $\ell$.
So each $r_i^j$ is a $P_{\beta_i}$-name for an element of $Q_{\beta_i}$.

The guardrail gives us the constant value $(\stem^*_i,\loss^*_i)\defeq h(\beta_i)$
(which is equal to $(\stem^{\bar p^j,n^j},\loss^{\bar p^j,n^j})$ for all $j\in J_i$).

\item
The case $\beta_i\in S^3$, i.e., the case of random forcing, is basically~\cite[2.14]{sh}:

For $x\subseteq [\stem^*_i]$, set $\Leb^\rel(x)=\frac{\Leb(x)}{\Leb([\stem^*_i])}$.
Note that the
$r_i^j$ are closed subsets of $[\stem^*_i]$ and
$\Leb^\rel(r_i^j)\ge 1-\loss^*_i$.

Let $\mathcal B^*$ be the power set of $[\stem^*_i]$; and 
let $\mathcal B$ be the sub-Boolean-algebra generated by 
by $r_i^j$ ($j\in J_i$),  let
$\mathcal X$ be the set of atoms and 
$\mathcal X'=\{x\in\mathcal X:\, \Leb^\rel(x)>0\}$.
So $|\mathcal X'|\le 2^{J_i}\le 2^{j^*}$,
$\sum_{x\in \mathcal X'} \Leb^\rel(x)=1$,
and $\sum_{x\in \mathcal X', x\subseteq r_i^j} \Leb^\rel(x)=\Leb^\rel(r_i^j)$.

So far, $\mathcal X'$ is a $P_{\beta_i}$-name.
Now we increase $q_s$ inside $P_{\beta_i}$ to some $q^+$ deciding
which of the (finitely many) Boolean combinations result in elements of $\mathcal X'$, and also deciding
rational numbers $y_x$ ($x\in \mathcal X'$)
with sum $1$ such that
$|\Leb^\rel(x)-y_x|<\frac{\sqrt2-1}{2}\cdot\loss^*_i\cdot 2^{-j^*}$.

We can now define the immediate successors of $s$ in $\mathcal T$:
For each $x\in \mathcal X'$, add an immediate successor $t_x$ 
and assign to it the probability $\prob_{t_x}=y_x$ and 
the condition $q_{t_x}={q^+}^\frown (\beta_i,r_x)$, where $r_x$ is a (name for a) partial random condition below $x$ (such a condition exists, as the Lebesgue positive intersection
of finitely many partial random condition contains a partial random condition).

Note that when we choose a successor $t$ randomly (according to the
assigned probabilities $\prob_t$), 
then for each $j\in J$ 
the probability of $q^+\Vdash q_t(\beta_i)\le r_i^j$ is 
at least 
\begin{multline*}
\textstyle
\sum_{x\in \mathcal X',x\subseteq r^j_i}\prob_x
\ge 
\sum_{x\in \mathcal X',x\subseteq r^j_i}\big(\Leb^\rel(x)-\frac{\sqrt2-1}{2}\cdot\loss^*_i\cdot 2^{-j^*}\big)\ge\\
\textstyle
\ge 
\big(\sum_{x\in \mathcal X',x\subseteq r^j_i} \Leb^\rel(x)\big)-\frac{\sqrt2-1}{2}\cdot\loss^*_i
=
\Leb^\rel(r^j_i)-\frac{\sqrt2-1}{2}\cdot\loss^*_i\ge\\
\textstyle
\ge 
1-\loss^*_i-\frac{\sqrt2-1}{2}\cdot\loss^*_i
= 
1-\frac{1+\sqrt2}{2}\cdot\loss^*_i.
\end{multline*}
\item
The case $\beta_i\in S^4$, i.e., the case of $\QII$:

Recall that $\QII$-conditions are subtrees of some basic compact tree $T^*$,
and there is a $h$ such that: if $\max\{|I_k|, j^*\}$ many conditions share a common node (above their stems) at height $h$, then they are compatible.

All conditions $r_i^j$ have the same stem $s^*=\stem^*_i$.
For each $j\in J_i$, set $d(j)=r_i^j\cap \omega^{h}$. 
Note that ($P_{\beta_i}$ forces that) $d(j)$ is a subset of $T^*\cap [s^*]\cap \omega^h$ 
of relative size $\ge 1-\frac12\loss^*_i$ (according to Lemma~\ref{lem:QIIbasic}(\ref{item:ghgh})).
First find $q^+\le q_s$ in $P_{\beta_i}$ deciding all $d(j)$.

We can now define the immediate successors of $s$ in $\mathcal T$:
For each $x\in T^*\cap [s^*]\cap \omega^h$ add an immediate successor  
$t_x$, and assign to it the uniform probability (i.e., $\prob_{t_x}=\frac{1}{|T^*\cap [s^*]\cap \omega^h|}$) and the condition $q_{t_x}={q^+}^\frown(\beta_i,r_x)$, where 
$r_x$ is a partial $\QII$-condition stronger than all $r_i^j$ that satisfy $x\in d(j)$. 
(Such a condition exists, as we can intersect $\le j^*$ many conditions 
of height $h$.)

If we chose $t$ randomly, then for each $j\in J$ the 
probability of $q^+\Vdash q_t\le r_i^j$ is at least $1 - \frac12\loss^*_i  \ge 1-\frac{1+\sqrt2}2\cdot \loss^*_i$.
\end{itemize}

In the end, we get a tree $\mathcal T$ of height $M$, and we can chose a 
random branch through $\mathcal T$, according to the assigned 
probabilities. We can identify the branch with its terminal node $t^*$, so in this notation the branch $t^*$  has probability $\prod_{n\le M}\prob_{t^* \restriction n}$.

Fix $j<j^*$. There are 
$|I_k|$ many levels $i<M$ such that at $\beta_i$ we deal with 
the $(\bar p^j,n^j)$-case. Let $M^j$ be the set of these levels.
For each $i\in M^j$,
we perform an experiment, by asking whether the next step $t\in \mathcal T$
(from the current $s$ at level $i$) will satisfy
$q_t\restriction \beta_i\forces q_t(\beta_i)\le r_i^j$.
While the exact probability for success will depend on which $s$
at level $i$ we start from,
a lower bound is given by $1-\frac{1+\sqrt2}2\cdot \loss^*_i$. 
Recall that $\loss^*_i=\loss^{\bar p^j,n^j}$, and that we set 
$p_j\defeq 1-\frac{1+\sqrt2}2\cdot \loss^*_i$ and 
$p_j'\defeq 1-\sqrt{\loss^{\bar p^j,n^j}}$ 
 in~\eqref{eq:bladw}.
So the chance of our branch $t^*$
having success fewer than $|I_k|\cdot (1-\sqrt{\loss^{\bar p^j,n^j}})$ many times,
out of the the $|I_k|$ many tries,
(let us call such a $t^*$ ``bad for $j$'')
is at most $F(|I_k| p';|I_k|,p)\le \frac1{2 j^*}$.

Accordingly, the measure of branches that are not bad for \emph{any} $j<j^*$
is at least $\frac12$. Fix such a branch $t^*$.
Then 
for each $j<j^*$,
\[
\big|\big\{i\in M^j:\, q_{t^*}\restriction \beta_i\Vdash q_{t^*}(\beta_i)\le r^j_i\big\}\big|\ge |I_k|\cdot \Big(1-\sqrt{\loss^{\bar p^j,n^j}}\Big),
\]
and thus $q_{t^*}$ forces that
\[
\big|\big\{\ell\in I_k:\,  p_\ell(\alpha^{\bar p^j,n^j}_\ell)\in G(\alpha^{\bar p^j,n^j}_\ell)\big\}\big|\ge |I_k|\cdot \Big(1-\sqrt{\loss^{\bar p^j,n^j}}\Big).
\]
\textbf{\boldmath(c) for $\cf(\alpha)=\omega$:}

Fix $\bar p$ as in the assumption of (c).
To simplify notation, let us assume that $\nabla\ne \emptyset$ and that 
$\sup(\nabla)<\sup(\supp(p_\ell))$
(for some, or equivalently: all,
$\ell\in\omega$). Let $0<n_0<m^{\bar p}$ be such that $\sup(\nabla)$ is
at position $n_0-1$ in
$\supp(p_\ell)$, i.e., $\sup(\nabla)=\alpha^{\bar p,n_0-1}_\ell$  
(independent of $\ell$), and set 
$\beta\defeq \sup(\nabla)+1$.

$\bar p\restriction\beta$ is again a countable $\Delta$-system following
the same $h$, and 
$\lim_{(\Xi^h_\gamma)_{\gamma<\alpha}}(\bar p)$ is by definition identical
to $\lim_{(\Xi^h_\gamma)_{\gamma<\beta}}(\bar p\restriction \beta)$,
which by induction is a valid condition and forces 
(c) for $\bar p\restriction \beta$. This gives us the set $A_{\bar p\restriction \beta}$ of measure at least $1-\sum_{n<n_0} \sqrt{\loss^{\bar p,n}}$.

For the positions $n_0\le n<m^{\bar p}$, 
all $(\alpha^{\bar p,n}_\ell)_{\ell\in\omega}$ are strictly increasing
sequences above $\beta$ with some limit $\alpha_n\le\alpha$.
Then (b) (applied to $\alpha_n$) gives us an according measure-1-set 
$A_{\bar p,n}$.

So $\lim_{(\Xi^h_\gamma)_{\gamma<\alpha}}(\bar p)$
forces that  $A'=A_{\bar p\restriction \beta}\cap \bigcap_{n_0\le n<m^{\bar p}}
A_{\bar p,n}$ has measure  $\Xi^h_\alpha(A')\ge 1-\sum_{n<n_0} \sqrt{\loss^{\bar p,n}}\ge 1-\sum_{n<m^{\bar p}} \sqrt{\loss^{\bar p,n}}$. 

Note that
$p_\ell\in G$ iff $p_\ell \restriction \beta\in G_{\beta}$ and
$p_\ell(\alpha^{\bar p,n})\in G(\alpha^{\bar p,n})$ for all $n_0\le n<m^{\bar p}$. 

Fix $k\in A'$. 
As $k\in A_{\bar p\restriction \beta}$, 
the relative frequency for $\ell\in I_k$ to \emph{not} satisfy
$p_\ell \restriction \beta\in G_{\beta}$ is at most $\sum_{n<n_0}\sqrt{\loss^{\bar p,n}}$. For any $n_0\le n<m^{\bar p}$, as $k\in A_{\bar p,n}$, the relative frequency for
\emph{not} $p_\ell(\alpha^{\bar p,n})\in G(\alpha^{\bar p,n})$ is at most
$\sqrt{\loss^{\bar p,n}}$. So the relative frequency for 
$p_\ell\in G$ to fail is at most $\sum_{n<n_0}\sqrt{\loss^{\bar p,n}}+\sum_{n_0\le n<m^{\bar p}}\sqrt{\loss^{\bar p,n}}$, as required.

\textbf{\boldmath(a\&c) for $\alpha=\gamma+1$ successor:}

For $\gamma\in S^0$ this is clear: Let $\Xi^h_\alpha$ be the name of
some FAM extending $\Xi^h_\gamma$. 
Let $\bar p$ be as in (c), without loss of generality
$\gamma\in\nabla$. 
Then $q^+\defeq \lim_{(\Xi^h_\beta)_{\beta<\alpha}}(\bar p)=q^\frown (\gamma,r)$,
where $q\defeq \lim_{(\Xi^h_\beta)_{\beta<\gamma}}(\bar p\restriction \gamma)$
and $r$ is the condition determined by $h(\gamma)$,
i.e., each $p_\ell\restriction\gamma$ forces $p_\ell(\gamma)=r$.
In particular, $q^+$ forces that
$p_\ell\in G_\alpha$ iff $p_\ell\restriction \gamma\in G_\alpha$.
By induction, (c) holds for $\gamma$, and therefore we get (c) for $\alpha$.

Assume $\gamma\in S^3\cup S^4$. 
By induction we know that (d) holds for $\gamma$,
i.e., that $\Xi^h_\gamma$ restricted to $M_\gamma$ (call it $\Xi_0$)
is in $M_\gamma$.
So the requirement in the definition~\ref{def:Mlimit} of the limit 
is satisfied, and thus the limit $q^+\defeq \lim_{\bar \Xi^h}(\bar p)$
is well defined
for any 
countable $\Delta$-system $\bar p$ as in (c): 
$q^+$ has the form $q^\frown(\gamma, r)$
with 
$q=\lim_{(\Xi^h_\beta)_\beta<\gamma}(\bar p\restriction \gamma)$
and $r=\lim_{\Xi_0}((p_\ell(\gamma))_{\ell\in\omega})$.
Now Lemma~\ref{lem:explicit} gives us the $P_\alpha$-name $\Xi^+$, 
which will be our new $\Xi^h_\alpha$.

This works as required: Again without loss of generality we can assume $\gamma\in\nabla$.
By induction,
$q$ forces that $\Xi^h_\gamma(A_{\bar p\restriction \gamma})\ge 1-\sum_{n<m^p-1}\sqrt{\loss^{\bar p,n}}$.
According to Lemma~\ref{lem:explicit},
$r$ forces that $\Xi^+(A_{(p_\ell(\gamma))_{\ell\in\omega}})\ge 1-\sqrt{\loss^{\bar p,m^p-1}}$.
So $q^+=q^\frown r$ forces that $\Xi^h_\alpha(A_{\bar p})\ge 1-\sum_{n<m^p}\sqrt{\loss^{\bar p,n}}$.
%
%

\textbf{(d):}

So we have (in $V$) the $P_\alpha$-name $\Xi^h_\alpha$.
We already know  that there is (in $V$)
an $\omega_1$-club set $X_0$ in $[\alpha]^{<\lambda_i}$
(for the appropriate $i\in\{3,4\}$)
such that $w\in X_0$ implies that 
$w$ satisfies Assumptions~\ref{asm:P} and~$\ref{asm:completesubforcing}$.
So each such $w\in X_0$ defines a
complete subforcing $P_w$ of $P_\alpha$
and the $P_\alpha$-mame for the according $P_w$-extention
$M_w$. 

Fix some $w\in X_0$. We will define $w'\supseteq w$ as follows:
For a $P_w$-name (and thus a $P_\alpha$-name)
$r\in 2^\omega$, let $s$ be the name of $\Xi_\alpha(r)\in[0,1]$.
As in Lemma~\ref{lem:closure1}(\ref{item:borel}), we can find a countable $w_r$
determining $s$. (I.e., there is a Borel function that calculates the real $s$ from the generics at $w_r$; moreover we know this Borel function in the ground model.) Let $w'\supseteq w$ be in $X_0$ 
and contain all these $w_r$,
for a (small representative set of) all $P_w$-names for reals.

Iterating this construction $\omega_1$ many steps gives 
us a suitable $w_\alpha$:
Note that the assignment of a name $r$ to the $\Xi_\alpha$-value $s$
can be done in $V$, and thus is known to $M_\alpha$.
In addition, $M_\alpha$ sees that for each ``actual real''
(i.e., element of $M_\alpha$), 
the value $s$ is already determined (by $P_\alpha'$).
So the assignment $r\mapsto s$, which is $\Xi_\alpha$
restricted to $M_\alpha$, is in $M_\alpha$.
\end{proof}



Note that in (c), when we deal with a countable $\Delta$-system $\bar p$ following the guardrail $h\in H^*$, the condition $\lim_{\bar \Xi^h}\bar p$
 forces in particular that infinitely many $p_\ell$ are in $G$.
So after carrying out the construction as above, we get a forcing
notion $\Pa$ satisfying the following (which is actually the only 
thing we need from the previous construction, in addition
to the fact that we can choose each $w_\alpha$ in an $\omega_1$-club):
\begin{lemma}\label{lem:deltabla}
For every countable $\Delta$-system $\bar p$
there is some $q$ forcing that infinitely many $p_\ell$
are in the generic filter.
\end{lemma}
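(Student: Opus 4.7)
The plan is to apply Construction~\ref{constr}(c) to $\bar p$ and read off that $\lim_{\bar\Xi^{h^*}}(\bar p)$ forces the set $A_{\bar p}$ of ``successful'' $k$'s to have positive FAM-measure, hence to be infinite; this is enough, since each such $k$ contributes at least one $\ell\in I_k$ with $p_\ell\in G$, and the intervals $I_k$ are pairwise disjoint.

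First I would observe that, by the definition of $\Delta$-system, $\bar p=(p_\ell)_{\ell\in\omega}$ follows \emph{some} partial guardrail $h$; and since the total support $\bigcup_\ell \supp(p_\ell)$ is countable, we may restrict $h$ to this countable set and treat it as a countable guardrail. Applying Lemma~\ref{use.EK}, pick $h^*\in H^*$ extending $h$; then $\bar p$ still follows $h^*$. Next, choose an $\alpha\le\delta_5$ that is either a limit of countable cofinality above $\sup\bigcup_\ell\supp(p_\ell)$ or any such ordinal for which clause~(c) of Construction~\ref{constr} applies to $\bar p$ as a $\Delta$-system in $P_\alpha$.

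By Construction~\ref{constr}(c), the condition $q\defeq \lim_{(\Xi^{h^*}_\beta)_{\beta<\alpha}}(\bar p)$ is well-defined and forces
\[
\Xi^{h^*}_\alpha(A_{\bar p})\;\ge\;1-\sum_{n<m^{\bar p}}\sqrt{\loss^{\bar p,n}}.
\]
Since $\bar p$ consists of conditions in $D^*$, the defining inequality $\sum_{\alpha\in\supp(p_\ell)\cap(S^3\cup S^4)}\sqrt{\loss(p_\ell(\alpha))}<\tfrac12$ yields $\sum_{n<m^{\bar p}}\sqrt{\loss^{\bar p,n}}<\tfrac12$, so $q$ forces $\Xi^{h^*}_\alpha(A_{\bar p})>\tfrac12>0$. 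Because a FAM assigns measure $0$ to every singleton (and hence to every finite set), any set of positive FAM-measure is infinite; thus $q$ forces $A_{\bar p}$ to be infinite.

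Finally, for each $k\in A_{\bar p}$ the definition of $A_{\bar p}$ in Construction~\ref{constr}(c) guarantees at least $|I_k|(1-\sum_n\sqrt{\loss^{\bar p,n}})>\tfrac12|I_k|$ many $\ell\in I_k$ with $p_\ell\in G_\alpha$; since $|I_k|\to\infty$, this number is $\ge 1$ for all but finitely many $k$. As the intervals $I_k$ are pairwise disjoint, infinitely many $k\in A_{\bar p}$ produce infinitely many distinct indices $\ell$ with $p_\ell\in G_\alpha$. Extending $q$ to a $\Pa$-condition (by padding with the trivial condition above $\alpha$) then gives the desired $q$. The only real ``obstacle'' here is notational — making sure we reduce from an arbitrary countable $\Delta$-system to one following a guardrail in $H^*$ at a suitable $\alpha$ so that Construction~\ref{constr}(c) applies verbatim; once that is done, the positive-measure argument is automatic.
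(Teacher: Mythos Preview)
Your proof is correct and follows essentially the same route as the paper: find $h\in H^*$ via Lemma~\ref{use.EK}, take $q=\lim_{\bar\Xi^h}(\bar p)$, and read off from Construction~\ref{constr}(c) that $\Xi^h_\alpha(A_{\bar p})>\tfrac12$, hence $A_{\bar p}$ is infinite and so infinitely many $p_\ell$ lie in $G$. Your only superfluous step is the hedging about $\alpha$ having countable cofinality---clause~(c) is stated and proved for every $\alpha$, so any $\alpha$ with $\bar p$ in $P_\alpha$ works (and $q\in P_\alpha\subseteq\Pa$ without further ``padding'').
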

\begin{proof}
According to Lemma~\ref{use.EK}, $\bar p$ follows some $h\in H^*$;
so $q=\lim_{\bar \Xi^h}(\bar p)$ will work.
\end{proof}

\begin{lemma}\label{lem:bissmall}
$\mylinII(\Pa,\kappa)$ for $\kappa\in[\lambda_2,\lambda_5]$ regular,
witnessed by the sequence $(c_\alpha)_{\alpha<\kappa}$ of the first $\kappa$ many Cohen reals.
\end{lemma}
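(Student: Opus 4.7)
The plan is to prove the lemma by contradiction. Suppose $(c_\alpha)_{\alpha<\kappa}$ fails to witness $\mylinII(\Pa,\kappa)$; then there is a $\Pa$-name $y$ for an element of $\omega^\omega$ and a cofinal $B_0 \subseteq \kappa$ such that for each $\beta \in B_0$ some $p_\beta \in \Pa$ forces $(\forall n \ge N_\beta)\, c_\beta(n) \le y(n)$ for some $N_\beta \in \omega$. By pigeonhole I refine $B_0$ (keeping size $\kappa$) so that $N_\beta = N^*$ is constant, and then replace each $p_\beta$ by a strengthening in $D^*$ which in particular has $p_\beta(\beta) = \langle 0\rangle$, the Cohen stem of length one at coordinate $\beta \in S^0$.

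Since $|B_0| = \kappa \ge \lambda_2$ is regular uncountable, the $\Delta$-system lemma gives first an uncountable $\Delta$-system and then a countable subfamily $(p_{\beta_\ell})_{\ell\in\omega}$ forming a countable $\Delta$-system with heart $\nabla$, with the $\beta_\ell$ strictly increasing and lying outside $\nabla$, and with $\beta$ occupying a fixed non-heart position in each support. Lemma~\ref{use.EK} then extends the induced countable partial guardrail to some $h \in H^*$, so $(p_{\beta_\ell})$ follows $h$, and Lemma~\ref{lem:deltabla} supplies a condition $q$ (with $\supp(q) = \nabla$) forcing that $p_{\beta_\ell}\in G$ for infinitely many $\ell$.

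Next I strengthen $q$ to $q^* \le q$ deciding $y(N^*) = m$ for some $m \in \omega$; $q^*$ still forces infinitely many $p_{\beta_\ell}\in G$, so it is compatible with $p_{\beta_\ell}$ for infinitely many $\ell$. Since $\supp(q^*)$ is finite while $(\beta_\ell)_{\ell\in\omega}$ is infinite, I pick $\ell_0$ with both $\beta_{\ell_0} \notin \supp(q^*)$ and $q^*$ compatible with $p_{\beta_{\ell_0}}$. I then form a common strengthening $q^{**} \le q^*, p_{\beta_{\ell_0}}$; taking $q^{**}(\beta_{\ell_0}) = \langle 0\rangle$ is permissible since $q^*(\beta_{\ell_0})$ is trivial and $p_{\beta_{\ell_0}}(\beta_{\ell_0}) = \langle 0\rangle$.

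Finally I extend $q^{**}$ to $q^{***}$ by lengthening only the Cohen stem at $\beta_{\ell_0}$ to $\langle 0,0,\ldots,0,m+1\rangle$ of length $N^*+1$. Then $q^{***}$ forces $c_{\beta_{\ell_0}}(N^*)=m+1$, while $q^{***} \le q^*$ forces $y(N^*)=m$ and $q^{***} \le p_{\beta_{\ell_0}}$ forces $c_{\beta_{\ell_0}}(N^*) \le y(N^*) = m$, giving $m+1 \le m$, absurd. The main obstacle is keeping $q^{**}(\beta_{\ell_0})$ short enough that $c_{\beta_{\ell_0}}(N^*)$ is still undetermined at the final step; this is handled by the three-fold choice of $\ell_0$, using that $\supp(q^*)$ is finite, that $q^*$ forces infinitely many $p_{\beta_\ell}\in G$, and that from the beginning we arranged $p_{\beta_\ell}(\beta_\ell)=\langle 0\rangle$ (a stem of length one) rather than a longer Cohen stem.
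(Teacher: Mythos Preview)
Your argument contains a genuine gap at the step where you form $q^{**}$.  You write that ``taking $q^{**}(\beta_{\ell_0}) = \langle 0\rangle$ is permissible since $q^*(\beta_{\ell_0})$ is trivial and $p_{\beta_{\ell_0}}(\beta_{\ell_0}) = \langle 0\rangle$''.  This is the intuition from products, but it fails in iterations: a common extension $r\le q^*,\,p_{\beta_{\ell_0}}$ may be forced to carry a long Cohen stem at $\beta_{\ell_0}$ in order to make the \emph{later} coordinates of $r$ compatible.  In fact your own argument shows no such $q^{**}$ can exist: any $r\le q^*,\,p_{\beta_{\ell_0}}$ already forces both $y(N^*)=m$ and $c_{\beta_{\ell_0}}(N^*)\le y(N^*)$, hence $c_{\beta_{\ell_0}}(N^*)\le m$.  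If a $q^{**}$ with $|q^{**}(\beta_{\ell_0})|\le N^*$ existed, your $q^{***}$ would be a legitimate condition forcing $c_{\beta_{\ell_0}}(N^*)=m+1\le m$, which is impossible.  So the very contradiction you seek shows that the promised $q^{**}$ is unavailable.  (There is also a smaller slip earlier: you cannot \emph{strengthen} $p_\beta$ to make $p_\beta(\beta)=\langle 0\rangle$; strengthening only lengthens the Cohen stem.  One uses pigeonhole instead to get a common value $s$.)

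The paper avoids this by inserting the ``bad'' Cohen information \emph{before} invoking Lemma~\ref{lem:deltabla}.  After thinning so that all $p_\ell(\beta_\ell)$ equal the same Cohen condition $s$ of length $n^{**}\ge N^*$, one replaces each $p_\ell$ by $p'_\ell$ with $p'_\ell(\beta_\ell)=s^\frown\ell$.  Since the $\beta_\ell$ are distinct ordinals, $(p'_\ell)_{\ell\in\omega}$ is still a countable $\Delta$-system following some guardrail in $H^*$, so Lemma~\ref{lem:deltabla} yields a $q$ forcing $p'_\ell\in G$ for infinitely many $\ell$.  Each such $p'_\ell$ forces $\ell=c_{\beta_\ell}(n^{**})\le y(n^{**})$, so $y(n^{**})$ is forced above every natural number---the contradiction.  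The point is that the diversified Cohen values are already baked into the $\Delta$-system whose limit we take, so no post-hoc tampering with a common extension is needed.
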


\begin{proof}
Fix a $\Pa$-name $y\in \omega^\omega$. We have to show that
$(\exists \alpha\in\kappa)\, (\forall \beta\in \kappa\setminus \alpha)\,\Pa\forces \lnot c_\beta \le^* y)$.

Assume towards a contradiction that $p^*$ forces that 
there are unboundedly many $\alpha\in\kappa$
with $c_\alpha\le^* y$, and enumerate them as $(\alpha_i)_{i\in\kappa}$.
Pick $p^i\le p^*$ deciding $\alpha_i$ to be some $\beta^i$, and also deciding 
$n_i$ such that $(\forall m\ge n_i)\, c_{\alpha_i}(m)\le y(m)$.
We can assume that $\beta^i\in\supp(p^i)$.
Note that $\beta^i$ is a Cohen position (as $\beta^i<\kappa\le \lambda_5$), and we can assume that $p^i(\beta^i)$ is a Cohen condition in $V$ (and not just a $P_{\beta_i}$-name for such a 
condition).
By strengthening and thinning out, we may assume:
\begin{itemize}
\item $(p^i)_{i\in\kappa}$ forms a 
$\Delta$ system with heart $\nabla$.
\item
All $n_i$ are equal to 
some $n^*$.
 \item $p^i(\beta_i)$ is always the same Cohen 
condition $s\in\omega^{<\omega}$, 
without loss of generality of length $|s|=n^{**}\ge n^*$.
\item For some position $n<m^{\bar p}$,
 $\beta^i$ is the $n$-th element of $\supp(p^i)$.
\end{itemize}
Note that this $n$ cannot be a heart condition:
For any $\beta\in\kappa$, at most 
$|\beta|$ many $p^i$ can force $\alpha_i=\beta$, as 
$p^i$ forces that $\alpha_i\ge i$ for all $i$.

Pick a countable subset of this $\Delta$-system which 
forms a countable $\Delta$-system $\bar p\defeq (p_\ell)_{\ell\in\omega}$.
So $p_\ell=p^{i_\ell}$ for some $i_\ell\in \kappa$,
and we set $\beta_\ell=\beta^{i_\ell}$. In particular all
$\beta_\ell$ are distinct.
Now extend each $p_\ell$ to $p'_\ell$ by extending
the Cohen condition $p_\ell(\beta_\ell)=s$ to
$s^\frown \ell$ (i.e., forcing $c_{\beta_\ell}(n^{**})=\ell$).
Note that 
$\bar p'\defeq(p'_i)_{i\in\omega}$ is still a countable $\Delta$-system,%
\footnote{Note that $\bar p'$ will not follow the same guardrail as $\bar p$.}
and by Lemma~\ref{lem:deltabla}
some $q$ forces that infinitely many of the
$p'_\ell$ are in the generic filter.
But each such $p'_\ell$ forces that $c_{\beta_\ell}(n^{**})=\ell \le y(n^{**})$,  a contradiction.
\end{proof}

\subsection{The left hand side}
We have now finished the consistency proof for the left hand side:
\begin{theorem}~\label{thm:left}
Assume GCH and let $\lambda_i$ be an increasing sequence of regular cardinals, none of which is a successor of a cardinal of countable cofinality
for $i=1,\dots,5$.
Then there is a cofinalities-preserving forcing $P$ resulting in
\begin{multline*}
\addnull=\lambda_1  < \addmeager= \mathfrak{b} =\lambda_2< \covnull =\lambda_3<
\nonmeager =\lambda_4<\\< \covmeager = 2^{\aleph_0}=\lambda_5.
\end{multline*}
\end{theorem}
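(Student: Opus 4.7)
The strategy is to invoke the machinery of Section~\ref{sec:partA} wholesale. Under the stated GCH and the hypothesis that no $\lambda_i$ is a successor of a cardinal of countable cofinality, the cardinal arithmetic clause of Assumption~\ref{asm:P} holds automatically: $\mu<\lambda_i$ implies $\mu^{\aleph_0}<\lambda_i$. To additionally secure a $\chi$ satisfying Assumption~\ref{asm:chi} (which generally fails under GCH when one insists $\chi<\lambda_3<\lambda_5$), first pass through a standard $<\chi^+$-closed, $\chi^+$-cc Cohen-style preliminary forcing adding $\lambda_5$ subsets of a regular $\chi$ with $\lambda_2\le\chi<\lambda_3$: being $\sigma$-closed it adds no countable sequences, preserves all cofinalities and the relevant fragments of GCH, and forces $2^\chi=\lambda_5$. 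In the resulting ground model fix a partition $(S^i)_{i=1,\dots,4}$ of $\delta_5\setminus\lambda_5$ into unbounded sets.

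Next, carry out Construction~\ref{constr} to define the cofinal parameter $\bar w$. At each stage $\alpha>\lambda_5$ with $\alpha\in S^i$, clause~(d) tells us that the collection of $w_\alpha$ satisfying Assumptions~\ref{asm:P} and~\ref{asm:completesubforcing} together with the name-absorption condition for $\Xi^h_\alpha$ (uniformly in $h\in H^*$) is an $\omega_1$-club in $[\alpha]^{<\lambda_i}$, hence unbounded. We may therefore interleave, for each $i$, a bookkeeping enumeration of targets $u\in[\delta_5]^{<\lambda_i}$ of length $\lambda_5$, so that at the corresponding stage $\alpha\in S^i$ (chosen with $\alpha>\sup u$) the selected admissible $w_\alpha$ contains $u$. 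This simultaneously meets the cofinality requirement on $\{w_\alpha:\alpha\in S^i\}$ and all closure constraints. The resulting forcing $\Pa$ has size $\lambda_5$ (by Lemma~\ref{lem:iterationbasic}(b), since each $|Q_\alpha|<\lambda_4\le\lambda_5$) and is ccc by Lemma~\ref{lem:iterationbasic}(a), hence preserves all cofinalities.

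The characteristics are now read off from the properties already established. Lemma~\ref{lem:partialPa} gives $\myparti(\Pa,\lambda_i,\lambda_5)$ for $i=1,2,3,4$, which via Lemma~\ref{lem:partialcharacteristics} produces the lower bounds $\addnull\ge\lambda_1$, $\mathfrak{b}\ge\lambda_2$, $\covnull\ge\lambda_3$, $\nonmeager\ge\lambda_4$. Lemma~\ref{lem:linearPapartial} combined with Lemma~\ref{lem:linearcharacteristics} supplies the matching upper bounds for $i=1,3,4$, together with $\covmeager\ge\lambda_5$ (from $\mylinIV(\Pa,\lambda_5)$). Lemma~\ref{lem:bissmall}, the crux of the section, yields $\mathfrak{b}\le\lambda_2$. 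Since $|\Pa|\le\lambda_5$ and $\lambda_5^{\aleph_0}=\lambda_5$ (unchanged by the $\sigma$-closed preliminary forcing), we obtain $2^{\aleph_0}\le\lambda_5$; the initial $\lambda_5$ many Cohen reals force $2^{\aleph_0}\ge\lambda_5$, so also $\covmeager\le 2^{\aleph_0}=\lambda_5$, and finally $\addmeager=\min(\mathfrak{b},\covmeager)=\lambda_2$.

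The bulk of the work was discharged upstream; the only subtle point here is the simultaneous bookkeeping sketched in the second paragraph, namely that the $\omega_1$-closure requirements from Construction~\ref{constr}(d) (one per $h\in H^*$, of which there are $\chi$ many) remain compatible with cofinally including prescribed targets. This is routine once one observes that any nice $\Pa$-name for a real uses only countably many conditions and therefore lives in some $P_{w^i}$ along an $\omega_1$-increasing chain, so that $\omega_1$-unions of admissible $w$'s are again admissible---and hence the admissible set stays $\omega_1$-club even when the requirement is imposed uniformly in $h$.
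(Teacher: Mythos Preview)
Your proof is correct and follows essentially the same route as the paper's: force with $R=\mathrm{Add}(\chi,\lambda_5)$ to arrange Assumption~\ref{asm:chi}, then run Construction~\ref{constr} and read off the values from Fact~\ref{fact:summary} and Lemma~\ref{lem:bissmall}. Your extra paragraph on the bookkeeping---arguing that the intersection over the $\chi$ many guardrails of the $\omega_1$-clubs from Construction~\ref{constr}(d) stays an $\omega_1$-club (using $\chi<\lambda_i$ for $i\in\{3,4\}$)---is a detail the paper leaves implicit, and your argument for it is right. One small slip: $\mathrm{Add}(\chi,\lambda_5)$ is ${<}\chi$-closed, not ${<}\chi^+$-closed; this is harmless since you only use $\sigma$-closure and the $\chi^+$-cc (together with ${<}\chi$-closure) for cofinality preservation.
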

\begin{proof}
Set $\chi=\lambda_2$, and 
let $R$ be the set of partial functions $f:\chi\times \lambda_5\to 2$ with $|\dom(f)|<\chi$ (ordered by inclusion).
$R$ is ${<}\chi$-closed, $\chi^+$-cc, 
and adds $\lambda_5$ many new elements to $2^\chi$.
So in 
the $R$-extension, Assumption~\ref{asm:chi} is satisfied, and we can 
construct $\Pa$ according to Assumption~\ref{asm:P} 
and Construction~\ref{constr}. Fact~\ref{fact:summary} gives us all inequalities for the left hand side, apart from $\mathfrak{b}\le\lambda_2$, which we get from~\ref{lem:bissmall}.

In the $R$-extension, CH holds and $P$ is a FS ccc iteration of length $\delta_5$,
 $|\delta_5|=\lambda_5$, and each iterand is a set of reals; so $2^{\aleph_0}\le \lambda_5$ is forced. Also, any FS ccc iteration of length $\delta$ (of nontrivial iterands)
forces 
$\covmeager\ge\cf(\delta)$: Without loss of generality $\cf(\delta)=\lambda$ is uncountable.
Any set $A$ of (Borel codes for) meager sets 
that has size ${<}\lambda$ already appears at some 
stage $\alpha<\delta$, and the iteration at state $\alpha+\omega$ adds a Cohen real
over the $V_\alpha$, so $A$ will not cover all reals.
\end{proof}
\begin{remark}
So this consistency result is reasonably general,
we can, e.g., use the values $\lambda_i=\aleph_{i+1}$
.
This is in contrast to the result for the whole diagram, 
where in particular the
small $\lambda_i$ have to be separated by strongly compact cardinals.
\end{remark}

\section{Ten different values in Cicho\'n's diagram}\label{sec:partB}

We can now apply, with hardly any change, the technique of~\cite{ten}
to get the following:

\begin{theorem}\label{thm:ten}
Assume GCH and that $\aleph_1<\kappa_9<\lambda_1<\kappa_8<\lambda_2<\kappa_7<\lambda_3<\kappa_6<\lambda_4<\lambda_5<\lambda_6<\lambda_7<\lambda_8<\lambda_9$
are regular,  $\lambda_i$ is not
a successor of a cardinal of countable cofinality
for $i=1,\dots,5$, 
$\lambda_2=\chi^+$ with $\chi$ regular, and $\kappa_i$ strongly compact for $i=6,7,8,9$.
Then there is a ccc forcing notion $\PaIX$ resulting in: 
\begin{multline*} 
\addnull=\lambda_1 <  \mathfrak{b}=\addmeager=\lambda_2<\covnull =\lambda_3<\nonmeager=\lambda_4<\\ <\covmeager=\lambda_5 < \nonnull=\lambda_6 <  \mathfrak{d}=\cofmeager= \lambda_7 <\cofnull=\lambda_8< 2^{\aleph_0}=\lambda_9.\end{multline*}
\end{theorem}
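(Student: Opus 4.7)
The plan is to follow the Boolean ultrapower construction of~\cite{ten} essentially verbatim, replacing the ``old'' left-hand-side forcing by our new $\Pa$ from Theorem~\ref{thm:left}. The point emphasized at the beginning of the paper is that, once the left-hand side is established with the full set of properties $\mylini$ and $\myparti$ (for $i=1,2,3,4$), the technique of iteratively Boolean-ultrapowering by normal measures derived from strongly compact cardinals is insensitive to whether we started from~\eqref{eq:leftold} or~\eqref{eq:leftnew}: it only uses the structural properties (ccc, $(\Rel_i,\lambda)$-goodness, $\mypart$, $\mylin$, closure under the $\omega_1$-club construction of the $w_\alpha$, and the fact that we are dealing with a FS ccc iteration whose iterands are absolutely definable).

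Concretely, I would construct $\PaIX$ as follows. First, in the $R$-extension (where $R$ adds $\lambda_9$ Cohen subsets of $\chi$), build $\Pa$ exactly as in Construction~\ref{constr} with top length $\delta_5=\lambda_5+\lambda_5$, so that $\Pa$ forces \eqref{eq:leftnew} with the five small values $\lambda_1,\dots,\lambda_5$. Then, for $i=6,7,8,9$ in turn, let $U_i$ be a $\kappa_i$-complete ultrafilter on some suitable index set witnessing the strong compactness of $\kappa_i$, and let $\Paip$ be the Boolean ultrapower $\BUP{\Pai}$ of $\Pai$ by $U_i$ (in the sense of~\cite{ten}). At each step the Boolean ultrapower (a) preserves ccc and the FS-iteration format, (b) sends a regular cardinal $\mu<\kappa_i$ that was the value of a ``small'' characteristic to $\lambda_{10-i}$ while fixing everything already living below $\kappa_i$, and (c) transports $\mylinj$ and $\mypartj$ along with it. The order of characteristics moved is chosen so that the dual pair $(\mathfrak b,\mathfrak d)$ is swapped with the dual pair $(\covnull,\nonnull)$ — this is forced upon us because in~\eqref{eq:leftnew} we already swapped $\mathfrak b$ and $\covnull$, and the ultrapower sends each small characteristic to its dual.

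After the four ultrapowers, combine the resulting $\mylinj(\PaIX,\lambda_j)$ (for $j=1,\dots,8$) with the corresponding $\mypartj(\PaIX,\lambda_j,\lambda_j)$ and use Lemmas~\ref{lem:linearcharacteristics} and~\ref{lem:partialcharacteristics} to read off the eight inner equalities. The equalities $\addmeager=\mathfrak b$ and $\cofmeager=\mathfrak d$ are ZFC facts (the $\min$/$\max$ identities recalled in the introduction), so they come for free. The value $2^{\aleph_0}=\lambda_9$ is forced because $|\PaIX|=\lambda_9$ while a standard Cohen-real argument (analogous to the last paragraph of the proof of Theorem~\ref{thm:left}) shows $\covmeager$ (and hence $2^{\aleph_0}$) is at least the cofinality of the iteration length.

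The main obstacle, and the only thing that really needs to be checked as opposed to cited, is that our replacement of the old creature $\mathbb E$ by $\QII$, together with the switch from ultrafilter limits to FAM-limits, does not damage the Boolean-ultrapower machinery. The two potential pitfalls are: (i) the Boolean ultrapower must still recognize a partial $\QII$-forcing as a partial $\QII$-forcing, which is automatic because $Q^{\mathrm{full}}_\alpha$ is Borel-definable and absolute; and (ii) the proof that $\mathfrak b$ remains small under $\PaIX$, which in Section~\ref{sec:partA} went through Construction~\ref{constr} and Lemma~\ref{lem:bissmall}, must survive the ultrapower. Both of these are handled uniformly by the observation of~\cite{ten} that the structural properties used in Section~\ref{sec:partA} (the existence of $H^*$, the $\omega_1$-club of admissible $w_\alpha$, and the $\Delta$-system/FAM-limit argument) are first-order expressible over the iteration skeleton and therefore transfer through $\BUP{(\cdot)}$. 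Once this transfer is articulated, the theorem follows.
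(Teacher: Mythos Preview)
Your overall plan---apply the Boolean ultrapower / elementary-embedding technique from~\cite{ten} to the new $\Pa$, using that $\mylini$ and $\myparti$ transfer---is correct in outline and matches the paper's approach. However, there is a genuine gap.

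You propose to build $\Pa$ \emph{in the $R$-extension} and then take Boolean ultrapowers there. This does not work as stated. The theorem asserts that $\PaIX$ is ccc, so you cannot have the non-ccc forcing $R$ as a factor. More importantly, the embeddings $j_{\kappa_i,\lambda_i}$ are constructed and applied \emph{in $V$}: if $\Pa$ lives only in $V[R]$, you would have to lift each $j_{\kappa_i}$ through $R$, but $R$ has size $\lambda_5$ (or $\lambda_9$ in your version), far above each $\kappa_i$, and is only ${<}\chi$-closed with $\chi<\kappa_7,\kappa_6$; there is no reason the strong compactness of $\kappa_6,\kappa_7$ survives. The paper deals with exactly this point before the proof proper: it shows (using $\lambda_2=\chi^+$ and the $\omega_1$-club nature of the admissible $w_\alpha$) that the inductive Construction~\ref{constr} can be carried out \emph{in $V$ under GCH}, with $R$ used only as an auxiliary device to verify that each $w_\alpha$ chosen in $V$ is suitable. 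This is the step your proposal is missing, and it is precisely ``the only argument that requires $\lambda_2=\chi^+$.''

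Two smaller points. First, your pitfall~(ii) is a red herring: what must transfer through $j$ is not the FAM-limit construction itself but merely the statement $\mylinII(\Pa,\mu)$ for regular $\mu\in[\lambda_2,\lambda_5]$, and this transfers by Facts~\ref{facts:ten}(\ref{item:lin}) just like the other $\mylini$. Second, your bookkeeping ``$\mylinj(\PaIX,\lambda_j)$ for $j=1,\dots,8$'' and ``$\mypartj(\PaIX,\lambda_j,\lambda_j)$'' is off: there are only four relations $\Rel_i$, and the right-hand values come from the \emph{same} $\mylini$ and $\myparti$ (the latter with second parameter $\lambda_{9-i}$, not $\lambda_i$), exactly as tracked step by step in the paper.
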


To do this, we first have to show that we can achieve the 
order for the left hand side, i.e., Theorem~\ref{thm:left},
starting with GCH and using a FS ccc iteration $\Pa$ alone
(instead of using $P=R* \Pa$, where $R$ is not ccc).
This is the only argument that requires $\lambda_2=\chi^+$. 
We will just briefly sketch it here, as it can be found with all details in~\cite[1.4]{ten}:

\begin{itemize}
\item We already know that in the $R$-extension,
(where $R$ is ${<}\chi$-closed, $\chi^+$-cc and
forces $2^{\chi}=\lambda_5$) we can find by the
inductive construction~\ref{constr} suitable $w_\alpha$
such that $R*\Pa$ works.
\item 
We now perform a similar inductive construction in the ground model: At stage $\alpha$,
we know that there is an $R$-name for a suitable $w_\alpha^1$
of size $<\lambda_i$ (where $i$ is $3$ in the random and $4$ in the $\QII$-case).
This name can be covered by some set $\tilde w_\alpha^1$ in $V$, 
still of size $<\lambda_i$, as $R$ is $\chi^+$-cc.
Moreover, in the $R$-extension, 
the suitable parameters form  an $\omega_1$-club;
so there is a suitable  $w_\alpha^2\supseteq \tilde w_\alpha^1$, etc.
Iterating $\omega_1$ many times and taking the union at the end
leads to $w_\alpha$ in $V$ which is forced by $R$ to be suitable.
\item Not only $w_\alpha$ is in $V$, but the construction for $w_\alpha$
is performed in $V$, so we can construct the 
whole sequence $\bar w=(w_\alpha)_{\alpha\in\delta_5}$ in $V$.
\item We now know that in the $R$-extension, the forcing 
$\Pa$ defined from $\bar w$ will satisfy $\mylinII(\Pa,\kappa)$ in the form of Lemma~\ref{lem:bissmall}.
\item By an absoluteness argument, we can show that actually 
in $V$ the forcing 
$\Pa$ defined form $\bar w$ will satisfy~Lemma~\ref{lem:bissmall}
as well.
\end{itemize}

The rest of the proof is the same as in
\cite[Sec.~2]{ten}, where we interchange $\mathfrak b$ and
$\covnull$ as well as $\mathfrak d$ and $\nonnull$.

We cite the following facts from~\cite[2.2--2.5]{ten}:
\begin{facts}\label{facts:ten}
\begin{enumerate}[(a)]
\item\label{item:first}  If $\kappa$ is a strongly compact cardinal and $\theta>\kappa$
regular, then there is an elementary embedding $j_{\kappa,\theta}:V\to M$
(in the following just called $j$) such that
\begin{itemize}
\item the critical point of $j$ is $\kappa$,
 $\cf(j(\kappa))=|j(\kappa)|=\theta$, 
\item $\max(\theta,\lambda)\le j(\lambda)< \max(\theta,\lambda)^+$
for all $\lambda\ge \kappa$ regular, and
\item $\cf(j(\lambda))=\lambda$ for $\lambda\ne\kappa$
regular,
\end{itemize}
and such that the following is satisfied:
\item\label{item:presiter} If $P$ is a FS ccc iteration along $\delta$,
then $j(P)$ is a FS ccc iteration along $j(\delta)$.
\item\label{item:lin}  
$\mylini(P,\lambda)$ implies  $\mylini(j(P),\cf(j(\lambda)))$, and thus
$\mylini(j(P),\lambda)$ if $\lambda\neq \kappa$ regular.\footnote{In~\cite{ten}, we only used ``classical'' relations $\RIII$ that are defined on a Polish space in an
absolute way. In this paper, we use the relation $\RIII$ which is not of this kind.
However, the proof still works without any change:
The parameter $\mathcal E$ used to define the relation $\RIII$, cf.\ Definition~\ref{def:mathcalE}, is a set of reals. So
$j(\mathcal E)=\mathcal E$, and we can still use the usual absoluteness arguments between 
$M$ and $V$. (A parameter not element of $H(\kappa_9)$ 
might be a problem.)}
\item\label{item:part}\label{item:last} 
If $\myparti(P,\lambda,\mu)$, then
$\myparti(j(P),\lambda,\mu')$, for
$\mu'=\begin{cases}
|j(\mu)|    &    \text{if }\kappa> \lambda 
\\
\mu &\text{if }\kappa< \lambda.
\end{cases}
$
\end{enumerate}
\end{facts}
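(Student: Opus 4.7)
The plan is to realize $j = j_{\kappa,\theta}$ as the ultrapower embedding associated to a fine, $\kappa$-complete ultrafilter $U$ on $P_\kappa(\theta)$, whose existence is precisely the strong compactness of $\kappa$. Item (a) is then an internal cardinal-arithmetic computation in the ultrapower, while (b)--(d) follow by standard elementarity-plus-absoluteness using (a). For (a), the critical point being $\kappa$ is the standard consequence of $\kappa$-completeness together with non-principality. Fineness gives that the seed function $s:x\mapsto \sup(x\cap\theta)$ represents, in $M$, an ordinal lying above $j(\alpha)$ for every $\alpha<\theta$; assembled with the maps $\alpha\mapsto [x\mapsto\alpha]_U$ this produces a cofinal sequence of length $\theta$ in $j(\kappa)$, hence $\cf(j(\kappa))=|j(\kappa)|=\theta$. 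For $\lambda\ge\kappa$ regular, $j(\lambda)$ is realized by the $U$-classes in ${}^{P_\kappa(\theta)}\lambda/U$, and Solovay-style counting under GCH gives $\max(\theta,\lambda)\le j(\lambda)<\max(\theta,\lambda)^+$. For $\lambda\ne\kappa$ regular, $\cf(j(\lambda))=\lambda$: below $\kappa$ by the critical point; above $\kappa$ because the image of $\lambda$ under $j$ is cofinal in $j(\lambda)$ in this range.

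For (b), the statement ``$P$ is a FS ccc iteration along $\delta$'' is first-order definable over $(V,\in)$ with parameter $P$, so elementarity transfers it to ``$j(P)$ is a FS ccc iteration along $j(\delta)$'' inside $M$; since being a FS iteration and being ccc are absolute between $M$ and $V$ (the iteration structure is carried by its definition, and ccc survives to the outer model), the conclusion holds in $V$. For (c), apply $j$ to a witness $(x_\alpha)_{\alpha<\lambda}$ of $\mylini(P,\lambda)$: the resulting $j(\lambda)$-indexed sequence witnesses $\mylini(j(P), j(\lambda))$ in $M$ by elementarity, and this is absolute to $V$; the first bullet of Lemma \ref{lem:linearcharacteristics} then yields $\mylini(j(P),\cf(j(\lambda)))$, and for $\lambda\ne\kappa$ regular, (a) identifies $\cf(j(\lambda))=\lambda$.

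For (d), apply $j$ to a witness $((S,\prec),(g_s)_{s\in S})$ of $\myparti(P,\lambda,\mu)$; elementarity gives $\myparti(j(P),j(\lambda),|j(S)|)$ in $M$ and hence in $V$. In the case $\kappa>\lambda$, $j$ fixes $\lambda$, so ${<}j(\lambda)$-directedness coincides with ${<}\lambda$-directedness in $V$, and $\mu'=|j(\mu)|$ as claimed. In the case $\kappa<\lambda$, the $\kappa$-completeness of $U$ forces any ${<}\lambda$-subset of $j(S)$ to lie in $j(A)$ for some $A\in[S]^{<\lambda}$; the $(S,\prec)$-upper bound of $A$, guaranteed by the ${<}\lambda$-directedness of $S$, maps under $j$ to an upper bound in $(j(S),j(\prec))$, so directedness survives to $V$, and $|j(S)|=\mu$ follows from the continuity of $j$ at $\mu$ in this regime. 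The main obstacle I expect is the careful cardinal-arithmetic bookkeeping in (a) and the two-case split in (d); both reduce to standard Solovay-style counting of $U$-classes combined with the $\kappa$-completeness and fineness of $U$.
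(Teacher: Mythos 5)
Two things. First, about what the paper actually does: these Facts are not proved here at all --- they are quoted from~\cite[2.2--2.5]{ten}, and the only new verification this paper supplies is the footnote to item~(c), namely that the non-Polish relation $\RIII$ causes no problem because its parameter $\mathcal E$ is a set of reals of size $\aleph_1<\kappa$, so $j(\mathcal E)=\mathcal E$ and the usual absoluteness arguments still apply. Your sketch never addresses this point, which is the one piece of content this paper is responsible for.

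Second, taken as a proof of the Facts themselves, your sketch has genuine gaps at exactly the places where the work lies. (a) An arbitrary fine $\kappa$-complete ultrafilter on $P_\kappa(\theta)$ does \emph{not} give the stated arithmetic: naive counting of ${}^{P_\kappa(\theta)}\lambda/U$ under GCH yields $|j(\lambda)|\le\max(\theta,\lambda)^+$, i.e.\ only $j(\lambda)<\max(\theta,\lambda)^{++}$; and if $U$ happens to be normal (possible when $\kappa$ is supercompact), then $\theta^{+M}=\theta^+$ and inaccessibility of $j(\kappa)$ in $M$ force $j(\kappa)>\theta^+$, so $|j(\kappa)|=\theta$ simply fails --- the embedding must be chosen carefully, which is the real content of the cited fact. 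Your cofinality arguments are also off: $[x\mapsto\sup(x\cap\theta)]_U$ and $j(\alpha)$ for $\alpha\ge\kappa$ are not elements of $j(\kappa)$, so they give no cofinal $\theta$-sequence in $j(\kappa)$; and for regular $\lambda$ with $\kappa<\lambda\le\theta$, $j''\lambda$ need not be cofinal in $j(\lambda)$ (boundedness of $f:P_\kappa(\theta)\to\lambda$ uses $|P_\kappa(\theta)|<\lambda$, which fails there), so $\cf(j(\lambda))=\lambda$ is again a special property of the chosen embedding. (b) ``ccc survives to the outer model'' is false as an absoluteness principle; the correct argument reflects a putative uncountable antichain $\{[f_i]_U\}_{i<\omega_1}$ of $j(P)$ into $P$ by intersecting the $\omega_1$ many $U$-large sets $\{x:\,f_i(x)\perp f_{i'}(x)\}$, using $\kappa$-completeness, and one must also check that the $M$-iteration structure is still an FS ccc iteration in $V$. (c),(d) ``holds in $M$ by elementarity and is absolute to $V$'' hides the main step: $\mylini$ and $\myparti$ for $j(P)$ quantify over all $j(P)$-names in $V$, so one needs a transfer (ccc of $j(P)$ in $V$, closure of $M$ under countable sequences or the strong-compactness covering property, and absoluteness of the ccc forcing relation for the relevant names) before the $M$-statement gives the $V$-statement. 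Finally, in (d) for $\kappa<\lambda$ your two key claims are wrong: $\kappa$-completeness only covers ${<}\kappa$-sized subsets, not ${<}\lambda$-sized ones, and $|j(S)|$ is not $\mu$ in the intended applications (there $\mu\le\theta$, so $|j(\mu)|=\theta>\mu$); the point of the case split is that for $\kappa<\lambda$ one must keep a witness of size $\mu$, essentially $j''S$, and verify the cone property for it, which is precisely where the covering property of strongly compact embeddings is used.
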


Using these facts, it is easy to finish the proof:\footnote{This is identical to the argument in~\cite{ten}, with the roles of $\mathfrak{b}$
and $\covnull$, as well as their duals,  switched.}
\begin{proof}[Proof of Theorem~\ref{thm:ten}]
Recall that we want to force the following values to
the characteristics of Figure~\ref{fig:ten} (where we indicate the positions of the $\kappa_i$ as well):
\[
\xymatrix@=4.5ex{
&            \lambda_3\ar[r]^{\kappa_6}        & \lambda_4 \ar[r]      &  \mye \ar[r]     & \lambda_8\ar[r] &\lambda_9 \\
&                               & \lambda_2\ar[r]\ar[u]\ar@{.}[lu]|-{\kappa_7}  &  \lambda_7\ar[u] &              \\ 
  \aleph_1\ar[r]_{\kappa_9} & \lambda_1\ar[r]\ar[uu]\ar@{.}[ru]|-{\kappa_8} & \mye\ar[r]\ar[u] &  \lambda_5\ar[r]\ar[u]& \lambda_6\ar[uu]  
}
\]       
%

\textbf{Step 5:} Our first step, called ``Step 5'' for notational reasons, just uses $\Pa$. This is an iteration of length $\delta_5$ with $\cf(\delta_5)=|\delta_5|=\lambda_5$, satisfying: 
\begin{equation}\label{eq:tmpV}
\parbox{0.8\columnwidth}{%
For all $i$:
$\mylini(\Pa,\mu)$ for all $\mu\in[\lambda_i,\lambda_5]$ regular, 
and $\myparti(\Pa,\lambda_i,\lambda_5)$.
}
\end{equation}
As a consequence, the characteristics 
are forced by $\Pa$ to have the following values\footnote{These values,
and the ones forced by the ``intermediate forcings'' $\PaVI$ to $\PaVIII$,
are  not required for the argument; they should just illustrate what is going on.} (we also mark the 
position of $\kappa_6$, which we are going to use in 
the following step):
\[
\xymatrix@=3.5ex{
&            \lambda_3\ar[r]^{\kappa_6}        & \lambda_4 \ar[r]      &  \mye \ar[r]     & \lambda_5\ar[r] &\lambda_5 \\
&                               & \lambda_2\ar[r]\ar[u]  &  \lambda_5\ar[u] &              \\ 
  \aleph_1\ar[r] & \lambda_1\ar[r]\ar[uu] & \mye\ar[r]\ar[u] &  \lambda_5\ar[r]\ar[u]& \lambda_5\ar[uu]  
}
\]       
%

\textbf{Step 6:}
Consider the embedding $j_6\defeq j_{\kappa_6,\lambda_6}$.
According to~\ref{facts:ten}(\ref{item:presiter}),
$\PaVI\defeq j_6(\Pa)$ is a FS ccc iteration of length $\delta_6\defeq j_6(\delta_5)$.
As $|\delta_6|=\lambda_6$, the continuum is forced to have size $\lambda_6$.

For $i=1$, we have
$\mylinI(\Pa,\mu)$ for all regular 
$\mu\in[\lambda_1,\lambda_5]$, so using~\ref{facts:ten}(\ref{item:lin}) we get 
$\mylinI(\PaVI,\mu)$ for all regular 
$\mu\in[\lambda_1,\lambda_5]$ different to $\kappa_6$;
as well as $\mylinI(\PaVI,\lambda_6)$ (as $\cf(j(\kappa_6))=\lambda_6$).
For $\mu=\lambda_1$ the former implies 
$\PaVI\Vdash \addnull\le \lambda_1$, and the latter 
$\PaVI\Vdash \cofnull\ge \lambda_6=2^{\aleph_0}$. 

More generally, we get from~\eqref{eq:tmpV} and~\ref{facts:ten}(\ref{item:lin})
\begin{equation}\label{eq:tmpVIl}
\parbox{0.8\columnwidth}{%
For all $i$: $\mylini(\PaVI,\mu)$
for all regular 
$\mu\in[\lambda_i,\lambda_5]\setminus \{\kappa_6\}$.
\\
For $i<4$:
$\mylini(\PaVI,\lambda_6)$.
}
\end{equation}

So in particular for $\mu=\lambda_i$, we see that 
the characteristics on the left do not increase;
for $\mu=\lambda_5$ that the ones on the right are still at least $\lambda_5$;
and for $i<4$ an $\mu=\lambda_6$ that
the according characteristics
on the right will have size continuum. (But not for $i=4$,
as $\kappa_4<\lambda_4$. And we will see that $\covmeager$
is at most $\lambda_5$.)

Dually, because $\lambda_3<\kappa_6<\lambda_4$, we get
from~\eqref{eq:tmpV} and~\ref{facts:ten}(\ref{item:part})
\begin{equation}\label{eq:tmpVIp}
\parbox{0.8\columnwidth}{%
For $i<4$: $\myparti(\PaVI,\lambda_i,\lambda_6)$. For $i=4$:
$\mypartIV(\PaVI,\lambda_4,\lambda_5)$.
}
\end{equation}
(The former because $|j_6(\lambda_5)|=\max(\lambda_6,\lambda_5)=\lambda_6$.)
So the characteristics on the left do not decrease, 
and $\PaVI\Vdash \covmeager \le \lambda_5$.

Accordingly, $\PaVI$ forces the following values:
\[
\xymatrix@=3.5ex{
&            \lambda_3\ar[r]        & \lambda_4 \ar[r]      &  \mye \ar[r]     & \lambda_6\ar[r] &\lambda_6 \\
&                               & \lambda_2\ar[r]\ar[u]\ar@{.}[lu]|-{\kappa_7}  &  \lambda_6\ar[u] &              \\ 
  \aleph_1\ar[r] & \lambda_1\ar[r]\ar[uu] & \mye\ar[r]\ar[u] &  \lambda_5\ar[r]\ar[u]& \lambda_6\ar[uu]  
}
\] 
%


\textbf{Step 7:}
We now apply a new embedding, $j_7\defeq j_{\kappa_7,\lambda_7}$,
to the forcing $\PaVI$ that 
we just constructed. (We always work in $V$, not in any inner model $M$ or any forcing
extention.)
As before,
set $\PaVII\defeq j_7(\PaVI)$, a FS ccc iteration of length $\delta_7=j_7(\delta_6)$,
forcing the continuum to have size $\lambda_7$.

Now $\kappa_7\in(\lambda_2,\lambda_3)$, so arguing as before, we get from~\eqref{eq:tmpVIl}
\begin{equation}\label{eq:tmpVIIl}
\parbox{0.8\columnwidth}{%
For all $i$: $\mylini(\PaVII,\mu)$
for all regular 
$\mu\in[\lambda_i,\lambda_5]\setminus \{\kappa_6,\kappa_7\}$.
\\
For $i<4$: 
$\mylini(\PaVII,\lambda_6)$. For $i<3$: $\mylini(\PaVII,\lambda_7)$.
}
\end{equation}
and from~\eqref{eq:tmpVIp}
\begin{equation}\label{eq:tmpVIIp}
\parbox{0.8\columnwidth}{%
For $i<3$:
$\myparti(\PaVII,\lambda_i,\lambda_7)$.
\\
For $i=3$:
$\mypartIII(\PaVII,\lambda_3,\lambda_6)$.
For $i=4$:
$\mypartIV(\PaVII,\lambda_4,\lambda_5)$.
}
\end{equation}
Accordingly, $\PaVII$ forces the following values:
\[
\xymatrix@=3.5ex{
&            \lambda_3\ar[r]        & \lambda_4 \ar[r]      &  \mye \ar[r]     & \lambda_7\ar[r] &\lambda_7 \\
&                               & \lambda_2\ar[r]\ar[u]  &  \lambda_7\ar[u] &              \\ 
  \aleph_1\ar[r] & \lambda_1\ar[r]\ar[uu]\ar@{.}[ru]|-{\kappa_8} & \mye\ar[r]\ar[u] &  \lambda_5\ar[r]\ar[u]& \lambda_6\ar[uu]  
}
\]       
%

\textbf{Step 8:}
Now we set $\PaVIII\defeq j_{\kappa_8,\lambda_8}(\PaVII)$, a FS ccc iteration
of length $\delta_8$. 
Now $\kappa_8\in(\lambda_1,\lambda_2)$, and as before, we get from~\eqref{eq:tmpVIIl}
\begin{equation}\label{eq:tmpVIIIl}
\parbox{0.8\columnwidth}{%
For all $i$: $\mylini(\PaVIII,\mu)$
for all regular 
$\mu\in[\lambda_i,\lambda_5]\setminus \{\kappa_6,\kappa_7,\kappa_8\}$.
\\
For $i<4$:
$\mylini(\PaVIII,\lambda_6)$.
For $i<3$: $\mylini(\PaVIII,\lambda_7)$.
\\
For $i<2$ (i.e., $i=1$): $\mylinI(\PaVIII,\lambda_8)$.
}
\end{equation}
and from~\eqref{eq:tmpVIIp}
\begin{equation}\label{eq:tmpVIIIp}
\parbox{0.8\columnwidth}{%
For $i=1$:
$\mypartI(\PaVIII,\lambda_1,\lambda_8)$.
For $i=2$:
$\mypartII(\PaVIII,\lambda_2,\lambda_7)$.
\\
For $i=3$:
$\mypartIII(\PaVIII,\lambda_3,\lambda_6)$.
For $i=4$:
$\mypartIV(\PaVIII,\lambda_4,\lambda_5)$.
}
\end{equation}
Accordingly, $\PaVIII$ forces the following values:
\[
\xymatrix@=3.5ex{
&            \lambda_3\ar[r]        & \lambda_4 \ar[r]      &  \mye \ar[r]     & \lambda_8\ar[r] &\lambda_8 \\
&                               & \lambda_2\ar[r]\ar[u]  &  \lambda_7\ar[u] &              \\ 
  \aleph_1\ar[r]_{\kappa_9} & \lambda_1\ar[r]\ar[uu] & \mye\ar[r]\ar[u] &  \lambda_5\ar[r]\ar[u]& \lambda_6\ar[uu]  
}
\]       
%

\textbf{Step 9:} Finally we set $\PaIX\defeq j_{\kappa_9,\lambda_9}(\PaVIII)$, a FS ccc iteration
of length $\delta_9$ with $|\delta_9|=\lambda_9$, i.e., the continuum will have 
size $\lambda_9$. As $\kappa_9<\lambda_1$, 
\eqref{eq:tmpVIIIl} and~\eqref{eq:tmpVIIIp} also hold for $\PaIX$ instead of $\PaVIII$.
Accordingly, we get the same values for the diagram as for $\PaVIII$, apart from the value for the continuum,
$\lambda_9$.
\end{proof}

\printbibliography

\end{document}